\newtheorem{theorem}{Theorem}
\theoremstyle{plain}
\newtheorem{acknowledgement}{Acknowledgement}
\newtheorem{corollary}{Corollary}
\newtheorem{lemma}{Lemma}
\newtheorem{proposition}{Proposition}
\theoremstyle{definition}
\newtheorem{definition}{Definition}
\newtheorem{example}{Example}
\numberwithin{equation}{section}
\begin{document}
\title{Tiling iterated function systems and Anderson-Putnam theory}
\author[M. F. Barnsley]{Michael F. Barnsley}
\address{Australian National University\\
Canberra, ACT, Australia }
\author{Andrew Vince}
\address{University of Florida\\
Gainesville, FL, USA \\
}
\date{11 October 2017}

\begin{abstract}
The theory of fractal tilings of fractal blow-ups is extended to
graph-directed iterated function systems, resulting in generalizations and
extensions of some of the theory of Anderson and Putnam and of Bellisard et
al. regarding self-similar tilings.
\end{abstract}

\maketitle

\section{Introduction \label{sec:intro}}

\begin{figure}[ptb]
\centering\includegraphics[
height=2.0193in,
width=3.832in
]{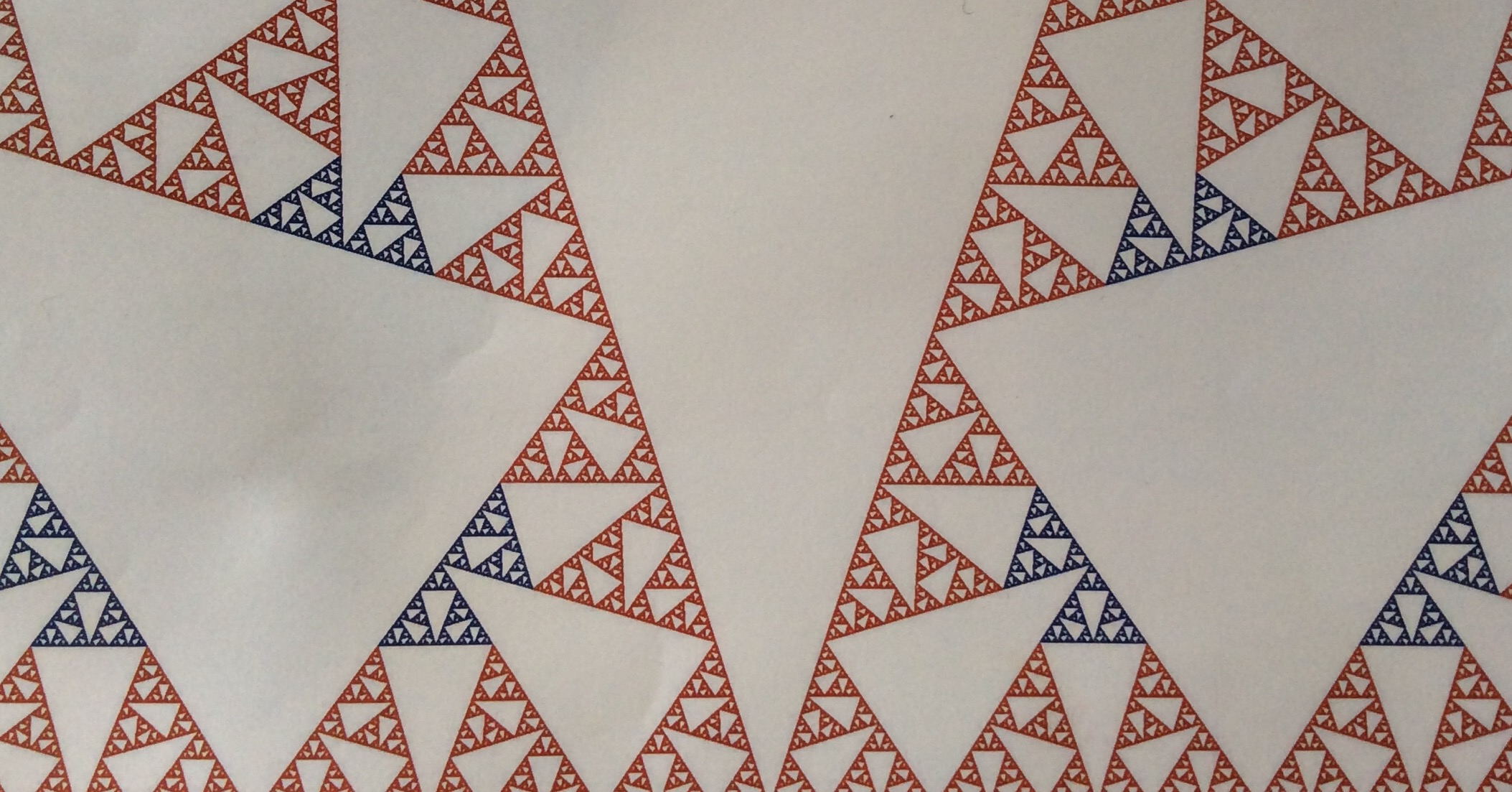}
\caption{Illustration of part of a tiling of a fractal blow-up. There are
two tiles, both topologically conjugate to Sierpinski triangles, a small
blue one and a larger red one.}
\label{img_2010}
\end{figure}

Given a natural number $M$, this paper is concerned with certain tilings of
strict subsets of Euclidean space $\mathbb{R}^{M}$ and of $\mathbb{R}^{M}$
itself. An example of part of such a tiling is illustrated in Figure \ref%
{img_2010}. We substantially generalize the theory of tilings of fractal
blow-ups introduced in \cite{tilings, barnsleyvince} and connect the result
to the standard theory of self-similar tiling \cite{anderson}. The central
main result in this paper is presented in Sections 7 and 8, with
consequences in Section 9. In Section 7 we extend the earlier work by
establishing the exact conditions under which for example translations of a
tiling agree with another tiling; in Section 9 we generalize this result to
tilings of blow-ups of graph directed IFS. Our other main contributions are
(i) development of an algebraic and symbolic fractal tiling theory along the
lines initiated by Bandt \cite{bandt2}; (ii) demonstration that the Smale
system at the heart of Anderson-Putnam theory is conjugate to a type of
symbolic dynamical system that is familiar to researchers in deterministic
fractal geometry; and (iii) to show that the theory applies to tilings of
fractal blow-ups, where the tiles may have no interior and the group of
translations on the tilings space is replaced by a groupoid of isometries.
At the foundational level, this work has notions in common with the work of
Bellisard et al. \cite{bellisard} but we believe that our approach casts new
light and simplicity upon the subject. 
\begin{comment1}

We construct tilings using what we call \textit{tiling iterated function
systems} (TIFS) defined in Section \ref{TIFSsec}. A tiling IFS is a graph
(directed) IFS \cite{jacquin, bedfordGraph, mauldin} where the maps are
similitudes and there is a certain algebraic constraint on the scaling ratios.
When the tilings are recognizable (see \cite{anderson} and references therein)
or more generally the TIFS is \textit{locally rigid}, defined in Section
\ref{rigidsec1}, the tiling space admits an invertible inflation map. Our
construction of a self-similar tiling using a TIFS\ is illustrated in Figure
\ref{15brsc}; it is similar to the one in \cite{tilings, barnsleyvince}, the
key difference being generalization to graph IFS.
\end{comment1}
In the standard theory \cite{anderson} self-similar tilings of $\mathbb{R}%
^{M}$ are constructed by starting from a finite set of CW-complexes which,
after being scaled up by a fixed factor, can be tiled by translations of
members of the original set. (It is easy to see this arrangement may be
described in terms of a graph IFS, that is a finite set of contractive
similitudes that map the set of CW-complexes into itself, together with a
directed graph that describes which maps take which complex into which.) By
careful iteration of this inflation (and subdivision) process, successively
larger patches and, in the limit, tilings may be obtained. We follow a
similar procedure here, but our setting is more general and results in a
rich symbolic understanding of (generalized) tiling spaces.

\begin{figure}[ptb]
\centering
\includegraphics[
height=4.0119in,
width=5.2416in
]{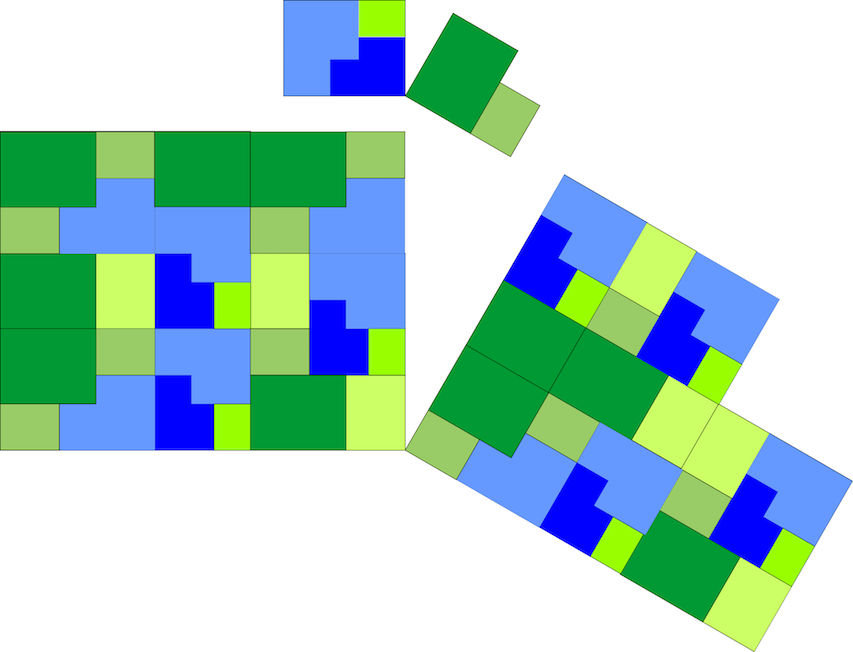}
\caption{This figure hints at how a family of tilings may be generated using
a graph directed IFS.}
\label{15brsc}
\end{figure}

In \cite{anderson} it is shown that inflation map acting on a space of
self-similar tilings, as defined there, is conjugate to a shift acting on an
inverse limit space constructed using pointed tilings, and semi-conjugate to
a shift acting on a symbolic space. This raises these questions. When are
two fractal tilings isometric? How can one tell symbolically if two tilings
are isometric? How can one tell if two tiling dynamical systems are
topologically conjugate? What is the topological structure of the tiling
space and how does the tiling dynamical system act on it? For example, can
one see purely symbolically the solenoid structure of the tiling space in
the case of the Penrose tilings, and what happens in the case of purely
fractal tilings? When the tiles are CW-compexes, an approach is via study of
invariants such as zeta functions and cohomology when these can be
calculated, as in \cite{anderson}. Here we approach the answers by
constructing symbolic representation of the (generalized) Anderson-Putnam
complex and associated tiling dynamics.

In Section \ref{sec:one} we provide necessary notation and background
regarding graph IFS and their attractors. We focus on the associated
symbolic spaces, namely the code or address spaces of IFS theory, as these
play a central role. Key to our main results is the relationship between the
attractor $A$ of a graph IFS $\left( \mathcal{F},\mathcal{G}\right) $ and
its address space $\Sigma,$ all defined in Section \ref{sec:one}: this
relationship is captured in a well-known continuous coding map $\pi
:\Sigma\rightarrow A$. Sets of addresses in $\Sigma$ are mapped to points in 
$A$ by $\pi$. This structure is reflected in a second mapping $\Pi
:\Sigma^{\dag}\rightarrow\mathbb{T}$ introduced in Section \ref{tilingsec},
where $\mathbb{T}$ is the tiling space that we associate with $\left( 
\mathcal{F},\mathcal{G}\right) $. The important results of Section \ref%
{sec:one} are the notation introduced there, the information summarized in
Theorem \ref{thm:one}, which concerns the existence and structure of
attractors, and Theorem \ref{thm:two} which concerns the coding map $%
\pi:\Sigma\rightarrow A$.

Section \ref{tilingsec} introduces TIFS (tiling iterated function systems)
and associated tiling space $\mathbb{T}$, shows the existence of a family of
tilings $\{\Pi(\theta):\theta\in\Sigma^{\dag}\}\subset\mathbb{T}$, and
explores their relationship to what we call a canonical family of tilings $%
\{T_{n}^{(v)}\}$ and their symbolic counterparts, symbolic tilings, $%
\{\Omega_{n}^{(v)}\}$, certain subsets of $\Sigma^{\dag}$. We explore the
action of an invertible symbolic inflation operator that acts on the
symbolic tilings and $\{\Omega_{n}^{(v)}\}$ and its inverse. When $%
\Pi:\Sigma^{\dag }\rightarrow\mathbb{T}$ is one-to-one, which occurs when
the TIFS is what we call locally rigid, there is a commutative relationship
between symbolic inflation/deflation on $\{\Omega_{n}^{(v)}\}$ and
inflation/deflation on the range of $\Pi$. **More to go here, then simplify.

In Sections 7 we define relative and absolute addresses of tiles in tilings:
these addressing schemes for tiles in tilings ***. In Section 8 we arrive at
our main result, Theorem **: we characterize members of $\mathbb{T}$ which
are isometric in terms of their addresses. This in turn allows us to
describe the full tiling space, obtained by letting the group of Euclidean
isometries act on the range of $\Pi$. **More to go here.

**Discussion of "forces the borders" (Anderson and Putnam), "unique
composition property" (Solomyak 1997), "recognizability" and relation to
rigid.

\section{\label{sec:one}Attractors of graph directed IFS: notation and
foundational results}

\subsection{Some notation}

$\mathbb{N}$ is the strictly positive integers and $\mathbb{N}_{0}=\mathbb{%
N\cup\{}0\}$. If $\mathcal{S}$ is a finite set, then $\left\vert \mathcal{S}%
\right\vert $ is the number of elements of $\mathcal{S}$ and $\left[ 
\mathcal{S}\right] =\left\{ 1,2,...,\left\vert \mathcal{S}\right\vert
\right\} $. For $N\in\mathbb{N}$, $\left[ N\right] =\{1,2,...,N\},$ $\left[ N%
\right] ^{\ast}=\cup_{k\in\mathbb{N}_{0}}[N]^{k},$ where $[N]^{0}=\left\{
\varnothing\right\} $. Also $d_{N}$ is the metric defined in \cite%
{barnsleyvince} such that $\left( \left[ N\right] ^{\ast }\cup\lbrack N]^{%
\mathbb{N}},d_{N}\right) $ is a compact metric space.

\subsection{Graph directed iterated function systems}

See \cite{hutchinson} for formal background on iterated function systems
(IFS). Here we are concerned with a generalization of IFS, often called
graph IFS. Earlier work related to graph IFS includes \cite{bandtTILE,
barnsleyFE2,bedfordGraph, dekking, mauldin, werner}. In some of these works
graph IFS are referred to as recurrent IFS.

Let $\mathcal{F}$ be a finite set of invertible contraction mappings $f:%
\mathbb{R}^{M}\rightarrow\mathbb{R}^{M}$ with contraction factor $%
0<\lambda<1 $, that is $\left\Vert f(x)-f(y)\right\Vert
\leq\lambda\left\Vert x-y\right\Vert $ for all $x,y\in\mathbb{R}^{M}$. We
write%
\begin{equation*}
\mathcal{F=}\left\{ f_{1},f_{2},...,f_{N}\right\} \text{ with }N=\left\vert 
\mathcal{F}\right\vert \text{.}
\end{equation*}

Let $\mathcal{G}=\left( \mathcal{E},\mathcal{V}\right) $ be a finite
strongly connected directed graph with edges 
\begin{equation*}
\mathcal{E=}\left\{ e_{1},e_{2},...,e_{E}\right\} \text{ with }E=\left\vert 
\mathcal{E}\right\vert =\left\vert \mathcal{F}\right\vert
\end{equation*}
and vertices $\mathcal{V=}\left\{ \upsilon_{1},\upsilon_{2},...,\upsilon
_{V}\right\} $ with $V=\left\vert \mathcal{V}\right\vert \leq\left\vert 
\mathcal{F}\right\vert $.

By "strongly connected" we mean that there is a path, a sequence of
consecutive directed edges, from any vertex to any vertex. There may be
none, one, or more than one directed edges from a vertex to a vertex,
including from a vertex to itself. The set of edges directed from $w\in%
\mathcal{V}$ to $\upsilon\in\mathcal{V}$ in $\mathcal{G}$ is $\mathcal{E}%
_{\upsilon,w}$.

We call $\left( \mathcal{F},\mathcal{G}\right) $ an \textit{graph} \textit{%
IFS }or more fully \textit{a} \textit{graph directed IFS}.\textit{\ }The
graph $\mathcal{G}$ provides the order in which functions of $\mathcal{F}$,
which are associated with the edges, may be composed from left to right. The
sequence directed edges, $(e_{\sigma_{1}},e_{\sigma_{2}},...e_{\sigma_{k}})$%
, is associated with the composite function $f_{\sigma_{1}}\circ
f_{\sigma_{2}}\circ...\circ f_{\sigma_{k}}$. We may denote the edges by
their indices $\left\{ 1,2,...,E\right\} $ and the vertices by $\left\{
1,2,...,V\right\} $.

\begin{figure}[ptb]
\centering
\includegraphics[
height=3.2024in,
width=5.2278in
]{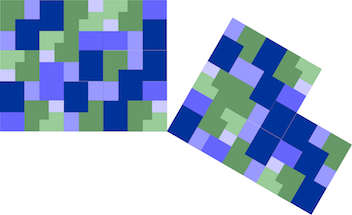}
\caption{See text.}
\label{blevel7col}
\end{figure}

Reference to Figure \ref{blevel7col}.

\subsection{Addresses of directed paths}

Let $\Sigma_{k}$ be the set of directed paths in $\mathcal{G}$ of length $%
k\in\mathbb{N}$, let $\Sigma_{0}$ be the empty string $\varnothing$, and $%
\Sigma_{\infty}$ be the set of directed paths, each of which starts at a
vertex and is of infinite length. Define 
\begin{equation*}
\Sigma=\Sigma_{\ast}\cup\Sigma_{\infty}\text{ where }\Sigma_{\ast}:=\cup
_{k\in\mathbb{N}_{0}}\Sigma_{k}\text{. }
\end{equation*}
A point or path $\sigma\in\Sigma_{k}$ is represented by $\sigma=\sigma
_{1}\sigma_{2}...\sigma_{k}\in\left[ N\right] ^{k}$ corresponding to the
sequence of edges $(e_{\sigma_{1}},e_{\sigma_{2}},...e_{\sigma_{k}})$
successively encountered on a directed path of length $k$ in $\mathcal{G}$.
Paths in $\Sigma$ correspond to allowed compositions of functions of the IFS.

Let $\mathcal{G}^{\dag}=(\mathcal{E}^{\dag},\mathcal{V})$ be the graph $%
\mathcal{G}$ modified so that the directions of all edges are reversed. Let $%
\Sigma_{k}^{\dag}$ be the set of directed paths in $\mathcal{G}^{\dag}$ of
length $k$. Let ${\Sigma}_{\infty}^{\dag}$ be the set of directed paths of $%
\mathcal{G}^{\dag}$, each of which start at a vertex and is of infinite
length. Let 
\begin{equation*}
{\Sigma}^{\dag}={\Sigma}_{\ast}^{\dag}\cup{\Sigma}_{\infty}^{\dag}\text{
where }{\Sigma}_{\ast}^{\dag}:=\cup_{k\in\mathbb{N}_{0}}{\Sigma}_{k}^{\dag}%
\text{. }
\end{equation*}

We define 
\begin{equation*}
\mathcal{E}_{v,\ast}=\left\{ u\in\mathcal{E}_{\upsilon,w}:w\in\mathcal{V}%
\right\} .
\end{equation*}
and, in the obvious way, also define $\mathcal{E}_{\ast,v}=$ $\mathcal{E}%
_{v,\ast}^{\dag}$, $\mathcal{E}_{w,v}=$ $\mathcal{E}_{v,w}^{\dag}$.

\begin{figure}[ptb]
\centering
\includegraphics[
height=3.2085in,
width=5.2382in
]{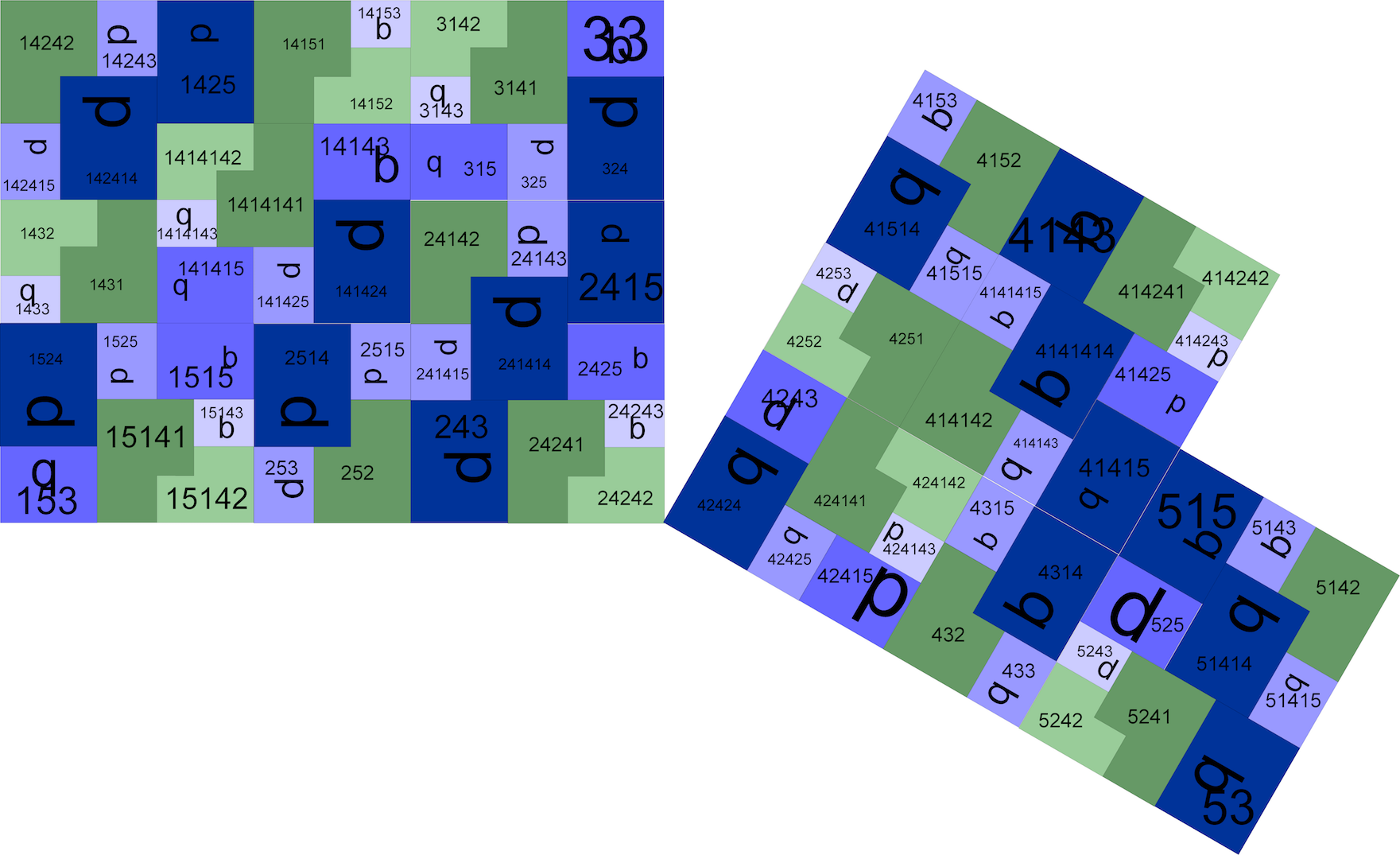}
\caption{See text.}
\label{blevel7colbs}
\end{figure}

\subsection{Notation for compositions of functions}

For $\theta=\theta_{1}\theta_{2}\cdots\theta_{k}\in\lbrack N]^{\ast}$, the
following notation will be used: 
\begin{equation*}
\begin{aligned} f_{\theta} &= f_{\theta_{1}} f_{\theta_{2}}\cdots
f_{\theta_k} \\ f_{-\theta} &=f_{\theta_{1}}^{-1}f_{\theta_{2}}^{-1}\cdots
f_{\theta_k}^{-1}=(f_{\theta_k\theta_{k-1}\cdots\theta_{1}})^{-1},
\end{aligned}
\end{equation*}
with the convention that $f_{\theta}$ and $f_{-\theta}$ are the identity
function $Id_{\mathbb{R}^{M}}$ if $\theta=\varnothing$. Likewise, for all $%
\theta\in\lbrack N]^{\mathbb{N}}$ and $k\in\mathbb{N}_{0},$ define $%
\theta|k=\theta_{1}\theta_{2}\cdots\theta_{k}$ and%
\begin{equation*}
f_{-\theta|k}=f_{\theta_{1}}^{-1}f_{\theta_{2}}^{-1}\cdots
f_{\theta_{k}}^{-1}=(f_{\theta_{k}\theta_{k-1}\cdots\theta_{1}})^{-1},
\end{equation*}
with the convention that $f_{-\theta|0}=id$.

For $\theta\in\lbrack N]^{\ast}\cup\lbrack N]^{\mathbb{N}}$ we define 
\begin{equation*}
\left\vert \theta\right\vert =\left\{ 
\begin{array}{cc}
k & \text{if }\theta\in\lbrack N]^{k}, \\ 
\infty & \text{if }\theta\in\lbrack N]^{\mathbb{N}}.%
\end{array}
\right.
\end{equation*}

\subsection{Existence and approximation of attractors}

Let $\mathbb{H}$ be the nonempty compact subsets of $\mathbb{R}^{M}$. We
equip $\mathbb{H}$ with the Hausdorff metric so that it is a complete metric
space. Define $\mathbf{F:}\mathbb{H}^{V}\rightarrow\mathbb{H}^{V}$ by%
\begin{equation*}
\left( \mathbf{F}X\right) _{v}=\left\{ x\in f_{e}X_{w}:e\in\mathcal{E}%
_{v,w},w\in\mathcal{V}\right\} \text{,}
\end{equation*}
for all $X\in\mathbb{H}^{V}$, where $X_{w}$ is the $w^{th}$ component of $X$.

\begin{definition}
Define $\theta\in\Sigma_{\infty}$ to be \textbf{disjunctive} if, given any $%
k\in\mathbb{N}$ and $\omega\in\Sigma_{k}$ there is $p\in\mathbb{N}_{0}$ so
that $\omega=\theta_{p+1}\theta_{p+1}...\theta_{p+k}$.
\end{definition}

Theorem \ref{thm:one} summarizes some known or readily inferred information
regarding the existence, uniqueness, and construction of attractors of $(%
\mathcal{F},\mathcal{G)}$.

\begin{theorem}
\label{thm:one} Let $\left( \mathcal{F},\mathcal{G}\right) $ be a graph
directed IFS.

\begin{enumerate}
\item (\textbf{Contraction on} $\mathbb{H}^{V}$) The map $\mathbf{F}:\mathbb{%
H}^{V}\rightarrow\mathbb{H}^{V}$ is a contraction with contractivity factor $%
\lambda$. There exists unique $\mathbf{A}=(A^{1},A^{2},...,A^{V})\in\mathbb{H%
}^{V}$ such that 
\begin{equation*}
\mathbf{A=FA}
\end{equation*}
and 
\begin{equation*}
\mathbf{A=}\lim_{k\rightarrow\infty}\mathbf{F}^{k}\mathbf{B}
\end{equation*}
for all $\mathbf{B}\in\mathbb{H}^{V}$.

\item (\textbf{Chaos Game on }$\mathbb{H}$) There is a unique $A\in\mathbb{H}$
such that
\[
A=\bigcap\limits_{k\in\mathbb{N}}\overline{(%
{\displaystyle\bigcup\limits_{n=k}^{\infty}}
x_{n})},
\]
for all $x_{0}\in\mathbb{R}^{m},$ and all disjunctive $\theta=\theta_{1}%
\theta_{2}...\in\Sigma_{\infty}^{\dag}$. Here
\[
x_{n}=f_{\theta_{n}}(x_{n-1})
\]
for all $n\in\mathbb{N}$ and the bar denotes closure. The set $A$ is related
to $\mathbf{A}$ by $A=\cup_{v\in\mathcal{V}}A^{v}.$

\item (\textbf{Deterministic Algorithm on }$\mathbb{H}$) If $B\in\mathbb{H}$
then 
\begin{equation*}
A=\lim_{k\rightarrow\infty}\left\{ x\in
f_{\sigma}(B):\sigma=\sigma_{1}\sigma_{2}...\sigma_{k}\in\Sigma_{k}\right\} .
\end{equation*}
Also 
\begin{equation*}
A^{w}=\lim_{k\rightarrow\infty}\left\{ x\in f_{\sigma}(B):\sigma=\sigma
_{1}\sigma_{2}...\sigma_{k}\in\Sigma_{k},\sigma_{1}\in\mathcal{E}_{\ast
,w}\right\}
\end{equation*}
for all $w\in\mathcal{V}$.
\end{enumerate}
\end{theorem}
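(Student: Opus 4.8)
The plan is to establish each of the three parts by reducing to the standard contraction-mapping and attractor theory for IFS, working component-wise over the vertex set $\mathcal{V}$.

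\medskip

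\noindent\textbf{Part (1).} First I would verify that $\mathbf{F}$ is well-defined, i.e.\ that $(\mathbf{F}X)_v$ is a nonempty compact set; nonemptiness uses that $\mathcal{G}$ is strongly connected so every vertex has at least one incoming edge, and compactness follows since it is a finite union of continuous images $f_e X_w$ of compact sets. Next I would equip $\mathbb{H}^V$ with the max metric $d(X,Y)=\max_{v}d_{\mathcal H}(X_v,Y_v)$, which is complete because each coordinate is. The contraction estimate is the routine computation: for each $v$, $d_{\mathcal H}\big((\mathbf{F}X)_v,(\mathbf{F}Y)_v\big)\le\max\{d_{\mathcal H}(f_eX_w,f_eY_w)\}\le\lambda\max_w d_{\mathcal H}(X_w,Y_w)\le\lambda\, d(X,Y)$, using that the Hausdorff distance between unions is bounded by the max of the Hausdorff distances of the pieces and that each $f_e$ is a $\lambda$-contraction. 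Then the Banach fixed point theorem gives existence and uniqueness of $\mathbf A$ with $\mathbf A=\mathbf F\mathbf A$ and the convergence $\mathbf A=\lim_k \mathbf F^k\mathbf B$ for every $\mathbf B\in\mathbb H^V$.

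\medskip

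\noindent\textbf{Part (3).} I would treat the Deterministic Algorithm next, since it is essentially a restatement of part (1) at the level of $\mathbb{H}$ rather than $\mathbb{H}^V$. Starting from $B\in\mathbb H$, set $\mathbf B=(B,B,\dots,B)\in\mathbb H^V$; then an induction on $k$ identifies $(\mathbf F^k\mathbf B)_v=\bigcup\{f_\sigma(B):\sigma\in\Sigma_k,\ \sigma \text{ is a path ending at } v\}$ (equivalently $\sigma_1\in\mathcal E_{\ast,w}$ in the stated indexing, after tracking the reversal conventions carefully — this bookkeeping is the one fiddly point). Taking the union over $v$ and applying part (1) gives the first displayed limit, with $A:=\cup_v A^v$; restricting to paths ending at a fixed $w$ gives the second, i.e.\ $A^w=\lim_k\{x\in f_\sigma(B):\sigma\in\Sigma_k,\ \sigma_1\in\mathcal E_{\ast,w}\}$. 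The convergence is in the Hausdorff metric, inherited from the convergence in part (1).

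\medskip

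\noindent\textbf{Part (2).} For the Chaos Game I would argue that the orbit closure $\overline{\{x_n:n\ge k\}}$ decreases (in $k$) to $A$. The inclusion $\limsup\subset A$ is the easier direction: since $\theta\in\Sigma_\infty^\dag$ is an infinite admissible (reversed) path, each $x_n=f_{\theta_n}\cdots f_{\theta_1}(x_0)$ lies within $\lambda^n\cdot\operatorname{diam}$-distance of the corresponding piece of $\mathbf A$, so all but finitely many $x_n$ are within any prescribed $\varepsilon$ of $A$, hence $\bigcap_k\overline{\{x_n:n\ge k\}}\subseteq A$. The reverse inclusion $A\subseteq\bigcap_k\overline{\{x_n:n\ge k\}}$ is where disjunctiveness is essential: given $a\in A$ and $\varepsilon>0$, choose (via part (3) and density) a finite path $\omega=\omega_1\cdots\omega_m$ with $f_\omega(B)$ within $\varepsilon$ of $a$; disjunctiveness guarantees the block $\omega$ (read in the correct order for $\mathcal G^\dag$) occurs infinitely often in $\theta$, and each occurrence forces some $x_n$ to lie in (a small neighbourhood of) $f_\omega$ applied to a bounded set, hence within $O(\varepsilon)$ of $a$; letting $\varepsilon\to0$ shows $a$ is a limit point of $\{x_n:n\ge k\}$ for every $k$. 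Uniqueness of $A$ and the identity $A=\cup_v A^v$ then follow from part (1). I expect the main obstacle to be precisely this reverse inclusion — specifically, getting the order of composition and the direction of edges in $\mathcal G$ versus $\mathcal G^\dag$ to line up so that "the block $\omega$ appears in $\theta$" translates into "$x_n$ is close to the right piece of $A$"; everything else is standard Banach-fixed-point and Hausdorff-metric manipulation.
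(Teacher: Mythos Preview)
Your proposal is correct and follows essentially the same route as the paper: the paper's own proof consists entirely of pointers---part (1) is ``well-known and straightforward'' (referencing the standard Banach contraction argument on $\mathbb{H}^V$), part (2) is declared a ``simple generalization'' of the chaos-game theorem from \cite{barnsleychaos} for the case $|\mathcal{V}|=1$, and part (3) is said to follow from (1)---and what you have written is precisely the natural unpacking of those references, including deriving (3) from (1) by diagonalising $B$ to $(B,\dots,B)$. Your caveat about the edge-direction bookkeeping in $\mathcal{G}$ versus $\mathcal{G}^\dag$ is well placed, but that is a notational chore rather than a mathematical obstacle.
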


\begin{proof}
(1) The proof of this is well-known and straightfoward. See for example \cite%
[Chapter 10]{barnsleyFE2}.

(2) This is a simple generalization of the main result in \cite%
{barnsleychaos} which applies when $\left\vert \mathcal{V}\right\vert =1$.

(3)\ This follows from (1).
\end{proof}

\begin{definition}
Using the notation of Theorem \ref{thm:one}, $A:=\cup_{v\in\mathcal{V}}A^{v}$
is the \textbf{attractor} of the IFS $\mathcal{(F},\mathcal{G)}$ and $%
\left\{ A^{v}:v\in\mathcal{V}\right\} $ are its components.
\end{definition}

We adopt this definition because it is unified with the case $\left\vert 
\mathcal{V}\right\vert =1$, allowing us to work using only of one copy of $%
\mathbb{R}^{M}$ and to provide a tiling theory that is naturally unified to
all cases. See also \cite{bandt2}. Algorithms based on the chaos game that
plot and render pictures of attractors in $\mathbb{R}^{M}$ when $\left\vert 
\mathcal{V}\right\vert =1$ can be generalized by restricting the symbolic
orbits so that they are consistent with the graph.

In this paper we assume $A^{i}\cap A^{j}=\varnothing$ for $i\neq j$. When
this is not the case, components of the attractor can be moved around to
ensure that they have empty intersections by means of a simple change of
coordinates: the replacements $f_{i}$ $\rightarrow$ $T_{v}f_{i}T_{v}^{-1}$
for all $i\in\mathcal{E}_{v,v}$, $f_{i}$ $\rightarrow$ $T_{v}f_{i}$ for all $%
i\in\mathcal{E}_{v,\ast}^{\dag}\backslash\mathcal{E}_{v,v}$, $f_{i}$ $%
\rightarrow$ $f_{i}T_{\upsilon}^{-1}$ for all $i\in\mathcal{E}%
_{\ast,v}^{\dag}\backslash\mathcal{E}_{v,v}$, where $T_{\upsilon}:\mathbb{R}%
^{M}\rightarrow\mathbb{R}^{M}$ is for example a translation, moves $A^{v}$
to $T_{\upsilon}A^{v}$ without altering the other components of the
attractor.

\subsection{The coding map $\protect\pi:\Sigma\rightarrow\mathbb{H(}A)$}

For $e\in\mathcal{E}$, let $\overleftarrow{\upsilon}(e),\overrightarrow{%
\upsilon}(e)\in\mathcal{V}$ be the unique vertices such that $e$ is directed
from $\overleftarrow{\upsilon}(e)$ to $\overrightarrow{\upsilon}(e)$.

\begin{definition}
Define ${\pi}:\Sigma\rightarrow\mathbb{H(}A)$ by%
\begin{align*}
\mathcal{\pi}(\varnothing) & =A, \\
\mathcal{\pi}(\omega) & =f_{\omega}(A^{\overrightarrow{\upsilon}(\omega
_{k})})\text{ for all }\omega=\omega_{1}\omega_{2}...\omega_{k}\in\Sigma
_{\ast},k\in\mathbb{N} \\
\mathcal{\pi}(\sigma) & =\lim_{k\rightarrow\infty}\pi(\omega|k)\text{,}\ 
\text{for all }\sigma\in\Sigma_{\infty},
\end{align*}
where the limit is with respect to the Hausdorff metric on $\mathbb{H(}A),$
the collection of nonempty compact subsets of $A$.
\end{definition}

We call $\Sigma$ the \textbf{address space }or\textbf{\ code space} and $\pi$
the \textbf{coding map} for the attractor of the graph IFS $\mathcal{(F},%
\mathcal{G)}$.

\begin{theorem}
\label{thm:two}The map $\pi:\Sigma\rightarrow\mathbb{H(}A)$ is well-defined
and continuous. Restricted to $\Sigma_{\infty}$, $\pi$ is a continuous map
from $\Sigma_{\infty}$ into $\mathbb{R}^{M}$ and $\pi(\Sigma_{\infty})=\{%
\pi(\sigma):\sigma\in\Sigma_{\infty}\}=A$.
\end{theorem}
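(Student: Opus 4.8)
The plan is to establish the three assertions in order: that $\pi$ is well-defined, that it is continuous, and that $\pi(\Sigma_\infty) = A$.

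First I would check that $\pi$ is well-defined on $\Sigma_\ast$, which is immediate from the formula $\pi(\omega) = f_\omega(A^{\overrightarrow{\upsilon}(\omega_k)})$, since a finite path $\omega \in \Sigma_k$ ends at a well-defined vertex $\overrightarrow{\upsilon}(\omega_k)$ and composing the associated contractions sends the compact set $A^{\overrightarrow{\upsilon}(\omega_k)}$ to a nonempty compact subset of $\mathbb{R}^M$. The key point is that this subset actually lies in $A$: using $\mathbf{A} = \mathbf{F}\mathbf{A}$ from Theorem \ref{thm:one}(1), one shows by induction on $k$ that $f_\omega(A^{\overrightarrow{\upsilon}(\omega_k)}) \subseteq A^{\overleftarrow{\upsilon}(\omega_1)} \subseteq A$, so $\pi(\omega) \in \mathbb{H}(A)$. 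For $\sigma \in \Sigma_\infty$ I would show the sequence $(\pi(\sigma|k))_{k\in\mathbb{N}}$ is Cauchy in $\mathbb{H}(A)$: since $\pi(\sigma|(k+1)) = f_{\sigma|k}\big(f_{\sigma_{k+1}}(A^{\overrightarrow{\upsilon}(\sigma_{k+1})})\big) \subseteq f_{\sigma|k}(A^{\overrightarrow{\upsilon}(\sigma_k)}) = \pi(\sigma|k)$, the sets are nested and decreasing, and each has diameter at most $\lambda^k \operatorname{diam}(A)$ because $f_{\sigma|k}$ contracts by $\lambda^k$. A nested decreasing sequence of nonempty compact sets with diameters tending to $0$ converges in the Hausdorff metric to their (single-point) intersection, so $\pi(\sigma)$ is well-defined; in particular $\pi(\sigma) \in \mathbb{R}^M$ may be identified with a point, and it lies in $A$ since $A$ is closed and contains every $\pi(\sigma|k)$.

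Next I would prove continuity. On $\Sigma_\ast$ with the metric $d_N$ restricted from $[N]^\ast \cup [N]^{\mathbb N}$, two paths that agree on a long initial segment have $\pi$-images that are both contained in $f_{\omega|k}(A)$ for the common prefix $\omega|k$, a set of diameter $\le \lambda^k \operatorname{diam}(A)$; hence the Hausdorff distance between them is small, giving continuity, and the same estimate handles points of $\Sigma_\infty$ and mixed pairs. More carefully: if $\sigma, \tau$ agree on their first $k$ symbols then $\pi(\sigma), \pi(\tau) \subseteq \pi(\sigma|k) = \pi(\tau|k)$, so the Hausdorff distance is at most the diameter of that common set, which is $O(\lambda^k)$; since $d_N(\sigma,\tau) \to 0$ forces the length of the common prefix to $\to \infty$, continuity follows. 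Restricted to $\Sigma_\infty$, the target is genuinely $\mathbb{R}^M$ (singletons), and the same estimate gives $\|\pi(\sigma) - \pi(\tau)\| = O(\lambda^k)$.

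Finally, for the surjectivity claim $\pi(\Sigma_\infty) = A$: the inclusion $\pi(\Sigma_\infty) \subseteq A$ is already noted. For the reverse, $\Sigma_\infty$ is compact (it is a closed subset of the compact space $[N]^{\mathbb N}$, cut out by the finitely many edge-adjacency constraints of $\mathcal{G}$), so $\pi(\Sigma_\infty)$ is compact, hence closed. Given $a \in A = \cup_v A^v$, say $a \in A^{v_0}$, I would use $\mathbf{A} = \mathbf{F}\mathbf{A}$ to find inductively an edge $e_1 \in \mathcal{E}_{v_0, v_1}$ with $a \in f_{e_1}(A^{v_1})$, then $e_2 \in \mathcal{E}_{v_1, v_2}$ with $a \in f_{e_1} f_{e_2}(A^{v_2})$, and so on, producing an infinite path $\sigma = e_1 e_2 \cdots \in \Sigma_\infty$ with $a \in \bigcap_k \pi(\sigma|k) = \{\pi(\sigma)\}$, so $a = \pi(\sigma)$. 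Alternatively one can invoke Theorem \ref{thm:one}(3): every point of $A$ is a limit of points in sets $f_\sigma(B)$ over $\sigma \in \Sigma_k$, which combined with compactness of $\pi(\Sigma_\infty)$ gives the result. I expect the main obstacle to be purely bookkeeping: keeping the vertex/edge decorations $\overleftarrow{\upsilon}, \overrightarrow{\upsilon}$ consistent so that the nesting $\pi(\sigma|(k+1)) \subseteq \pi(\sigma|k)$ is genuinely valid — this requires that the target component $A^{\overrightarrow{\upsilon}(\sigma_k)}$ of the shorter prefix really does contain $f_{\sigma_{k+1}}(A^{\overrightarrow{\upsilon}(\sigma_{k+1})})$, which is exactly one coordinate of the fixed-point equation $\mathbf{A} = \mathbf{F}\mathbf{A}$ and must be cited precisely.
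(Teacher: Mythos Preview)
Your proposal is correct and is precisely the standard Hutchinson-style argument the paper invokes: the paper's own proof consists solely of the remark that the result ``follows the same lines as for the case $|\mathcal V|=1$ and is well known since the work of Hutchinson,'' so you have simply written out in full the nested-compact-sets/diameter-shrinking/inductive-address argument that is being cited. The only thing to note is that your level of detail far exceeds the paper's, which gives no argument at all beyond the reference.
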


\begin{proof}
This follows the same lines as for the case $\left\vert \mathcal{V}%
\right\vert =1$ and is well known since the work of Hutchinson \cite%
{hutchinson}.
\end{proof}

\subsection{Shift maps}

Shift maps acting on the symbolic spaces $\Sigma$ and ${\Sigma}^{\dag}$,
defined here, will be seen to interact in an important way with coding maps,
attractors, and tilings.

The \textit{shift }$S:[N]^{\ast}\cup\lbrack N]^{\infty}\rightarrow\lbrack
N]^{\ast}\cup\lbrack N]^{\infty}$ is defined by $S(\theta_{1}\theta_{2}%
\cdots\theta_{k})=\theta_{2}\theta_{3}\cdots\theta_{k}$ and $S(\theta
_{1}\theta_{2}\cdots)=\theta_{2}\theta_{3}\cdots,$ with the convention that $%
S\theta_{1}=\varnothing$. A point $\theta\in\lbrack N]^{\infty}$ is \textit{%
eventually periodic} if there exists $m\in\mathbb{N}_{0}$ and $n\in\mathbb{N}
$ such that $S^{m}\theta=S^{m+n}\theta$. In this case we write $%
\theta=\theta_{1}\theta_{2}\cdots\theta_{m}\overline{\theta_{m+1}\theta
_{m+2}\cdots\theta_{m+n}}$.

We have $S(\Sigma)=\Sigma$ and $S({\Sigma}^{\dag})={\Sigma}^{\dag}.$ We
write $S:\Sigma\rightarrow\Sigma$ for the restricted map $S|_{\Sigma}$ and
likewise write $S:{\Sigma}^{\dag}\rightarrow{\Sigma}^{\dag}$. Note that $S$
is continuous. The metric spaces $\left( \Sigma,d_{\left\vert \mathcal{F}%
\right\vert }\right) $ and $\left( {\Sigma}^{\dag},d_{\left\vert \mathcal{F}%
\right\vert }\right) $ are compact shift invariant subspaces of $\left[ N%
\right] ^{\ast}\cup\left[ N\right] ^{\infty}$.

The coding map $\pi:\Sigma\rightarrow\mathbb{H(}A)$ interacts with shift $%
S:\Sigma\rightarrow\Sigma$ according to%
\begin{equation*}
f_{\sigma|k}\circ\pi\circ S^{k}\left( \sigma\right) =\pi\left( \sigma\right)
\end{equation*}
for all $\sigma\in\Sigma$, for all $k\in\mathbb{N}$ with $k\leq\left\vert
\sigma\right\vert $.

\section{\label{tilingsec}Tilings}

\subsection{Tilings in this paper}

We use the same definitions of tile, tiling, similitude, scaling ratio,
isometry and prototile set as in \cite{barnsleyvince}. For completeness we
quote the definitions in this section. A \textit{tile} is a perfect (i.e. no
isolated points) compact nonempty subset of $\mathbb{R}^{M}$. Fix a
Hausdorff dimension $0<D_{H}\leq M$. A \textit{tiling} in $\mathbb{R}^{M}$
is a set of tiles, each of Hausdorff dimension $D_{H}$, such that every
distinct pair of tiles is non-overlapping. Two tiles are \textit{%
non-overlapping} if their intersection is of Hausdorff dimension strictly
less than $D_{H}$. The \textit{support} of a tiling is the union of its
tiles. We say that a tiling tiles its support.

A \textit{similitude} is an affine transformation $f:{\mathbb{R}}%
^{M}\rightarrow {\mathbb{R}}^{M}$ of the form $f(x)=\lambda \,O(x)+q$, where 
$O$ is an orthogonal transformation and $q\in \mathbb{R}^{M}$ is the
translational part of $f(x)$. The real number $\lambda >0$, a measure of the
expansion or contraction of the similitude, is called its \textit{scaling} 
\textit{ratio}. An \textit{isometry} is a similitude of unit scaling ratio
and we say that two sets are isometric if they are related by an isometry.
We write $\mathcal{U}$ to denote the group of isometries on $\mathbb{R}^{M}$
and write $\mathcal{T}$ to denote a specific group contained in $\mathcal{U}$%
, see above Lemma \ref{lem:canonical}.

The \textit{prototile set} $\mathcal{P}$ of a tiling $T$ is a set of tiles
such that every tile $t\in T$ can be written in the form $\tau(p)$ for some $%
\tau\in\mathcal{T}$ and $p\in\mathcal{P}$. The tilings constructed in this
paper have finite prototile sets.

\subsection{A convenient compact tiling space}

Let $\mathbb{T}^{^{\prime}}$ be the set of all tilings on $\mathbb{R}^{M}$
using a fixed prototile set (and fixed group $\mathcal{T}$). Let $%
t_{\emptyset}$ be the empty tile of $\mathbb{R}^{M}$. We assume throughout
that if $T\in\mathbb{T}^{^{\prime}}$ then $t_{\emptyset}\in T$. We may think
of $t_{\emptyset}$ as "the tile at infinity".

Let $\rho:\mathbb{R}^{M}\rightarrow\mathbb{S}^{M}$ be the usual $M$%
-dimensional stereographic projection to the $M$-sphere, obtained by
positioning $\mathbb{S}^{M}$ tangent to $\mathbb{R}^{M}$ at the origin.
Define $\widehat{\rho}:\mathbb{T}^{^{\prime}}\rightarrow\mathbb{S}^{M}$ so
that $\widehat{\rho}\left( t_{\emptyset}\right) =\mathbb{S}^{M}\backslash
\rho(\mathbb{R}^{M})$ is the point on $\mathbb{S}^{M}$ diametric to the
origin and%
\begin{equation*}
\widehat{\rho}\left( T\right) =\left\{ \rho\left( t\right) :t\in T,t\neq
t_{0}\right\} \cup\widehat{\rho}\left( t_{\emptyset}\right) \text{.}
\end{equation*}
Let $\mathbb{H(S}^{M})$ be the non-empty closed (w.r.t. the usual topology
on $\mathbb{S}^{M}$) subsets of $\mathbb{S}^{M}.$ Let $d_{\mathbb{H(S}^{M})}$
be the Hausdorff distance with respect to the round metric on $\mathbb{S}%
^{M} $, so that $(\mathbb{H(S}^{M}),d_{\mathbb{H(S}^{M})})$ is a compact
metric space. Let $\mathbb{H(H(S}^{M}))$ be the nonempty compact subsets of $%
(\mathbb{H(S}^{M}),d_{\mathbb{H(S}^{M})})$, and let $d_{\mathbb{H(H(S}%
^{M}))} $ be the associated Hausdorff metric. Then $(\mathbb{H(H(S}^{M})),d_{%
\mathbb{H(H(S}^{M}))})$ is a compact metric space. Finally, define a metric $%
d_{\mathbb{T}^{\prime}}$ on $\mathbb{T}^{\prime}$ by%
\begin{equation*}
d_{\mathbb{T}^{\prime}}(T_{1},T_{2})=d_{\mathbb{H(H(S}^{M}))}(\widehat{\rho }%
\left( T_{1}\right) ,\widehat{\rho}\left( T_{2}\right) )
\end{equation*}
for all $T_{1},T_{2}\in\mathbb{T}^{\prime}$.

\begin{theorem}
$\mathbb{(T}^{\prime},d_{\mathbb{T}^{\prime}})$ is a compact metric space.
\end{theorem}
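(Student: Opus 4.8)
The plan is to show $\mathbb{T}^{\prime}$ is a closed subset of the compact metric space $(\mathbb{H(H(S}^{M}))},d_{\mathbb{H(H(S}^{M}))})$ via the embedding $\widehat{\rho}$, and that $\widehat{\rho}$ is a homeomorphism onto its image; compactness of $\mathbb{T}^{\prime}$ then follows since a closed subset of a compact metric space is compact. First I would verify that $d_{\mathbb{T}^{\prime}}$ is genuinely a metric, which reduces to checking that $\widehat{\rho}:\mathbb{T}^{\prime}\rightarrow\mathbb{H(H(S}^{M}))$ is injective: distinct tilings $T_{1}\neq T_{2}$ (both containing $t_{\emptyset}$) differ in some non-empty tile, and since each such tile is a compact subset of $\mathbb{R}^{M}$ and $\rho$ is a homeomorphism onto $\mathbb{S}^{M}\setminus\{\widehat{\rho}(t_{\emptyset})\}$, the image sets $\widehat{\rho}(T_{1}),\widehat{\rho}(T_{2})$ differ as elements of $\mathbb{H(H(S}^{M}))$. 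Symmetry and the triangle inequality for $d_{\mathbb{T}^{\prime}}$ are then inherited directly from $d_{\mathbb{H(H(S}^{M}))}$.

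Next I would prove that the image $\widehat{\rho}(\mathbb{T}^{\prime})$ is closed in $\mathbb{H(H(S}^{M}))$. This is the crux of the argument. Suppose $\widehat{\rho}(T_{n})\to\mathcal{K}$ in $d_{\mathbb{H(H(S}^{M}))}$ for some sequence $T_{n}\in\mathbb{T}^{\prime}$ and some $\mathcal{K}\in\mathbb{H(H(S}^{M}))$; I must produce a tiling $T\in\mathbb{T}^{\prime}$ with $\widehat{\rho}(T)=\mathcal{K}$. The elements of $\mathcal{K}$ are closed subsets of $\mathbb{S}^{M}$ arising as Hausdorff limits of sequences $\rho(t_{n})$ with $t_{n}\in T_{n}$; I would set $T = \{\rho^{-1}(k\setminus\{\widehat{\rho}(t_\emptyset)\}) : k\in\mathcal{K}\}$, together with $t_{\emptyset}$. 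One then has to check three things: (a) each nonempty member of $T$ is again a tile, i.e. a perfect nonempty compact set of the correct Hausdorff dimension $D_{H}$ — this uses that the prototile set is finite and $\mathcal{T}$ acts by isometries, so any Hausdorff limit of tiles $\tau_n(p_n)$ with $p_n\in\mathcal{P}$ is, after passing to a subsequence with $p_n$ constant and $\tau_n\to\tau$, equal to $\tau(p)$ and hence a genuine tile with prototile in $\mathcal{P}$ (here one invokes compactness of the relevant subgroup action or a normalization of translations near the origin, handling the escape-to-infinity case via $t_{\emptyset}$); (b) distinct members of $T$ are non-overlapping — the non-overlapping condition ($\dim_H(t\cap t')<D_H$) passes to Hausdorff limits because each limit tile is a limit of tiles from a fixed finite prototile set, so overlaps cannot be created in the limit; (c) $\widehat{\rho}(T)=\mathcal{K}$, which is essentially the definition of $T$ together with the fact that $\widehat{\rho}(t_\emptyset)\in k$ forces consistency at the point at infinity.

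Finally, with $\widehat{\rho}(\mathbb{T}^{\prime})$ shown closed, it is a closed subset of the compact space $\mathbb{H(H(S}^{M}))$, hence compact; and since $\widehat{\rho}$ is a continuous bijection from $\mathbb{T}^{\prime}$ onto this compact set with $d_{\mathbb{T}^{\prime}}$ defined precisely as the pullback metric, $\widehat{\rho}$ is an isometry, so $(\mathbb{T}^{\prime},d_{\mathbb{T}^{\prime}})$ is itself a compact metric space. I expect the main obstacle to be step (a)–(b): controlling what Hausdorff limits of tiles look like and ruling out degeneration (loss of perfectness, drop in Hausdorff dimension, spurious overlap). The finiteness of $\mathcal{P}$ and the rigidity of the isometry group $\mathcal{T}$ are exactly what make this work, and the tile at infinity $t_{\emptyset}$ is the bookkeeping device that absorbs tiles escaping to infinity; I would be careful to state and use a lemma to the effect that $\{\tau(p):\tau\in\mathcal{T},\ p\in\mathcal{P},\ \tau(p)\cap K\neq\varnothing\}$ is compact in $\mathbb{H}$ for each compact $K\subset\mathbb{R}^{M}$, which is the technical heart.
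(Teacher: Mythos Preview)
Your strategy---embed $\mathbb{T}'$ isometrically into the compact space $\mathbb{H}(\mathbb{H}(\mathbb{S}^{M}))$ via $\widehat{\rho}$ and show the image is closed---is exactly what the construction of $d_{\mathbb{T}'}$ is designed for, and it is the same approach the paper relies on. The paper's own proof, however, consists only of the remark ``Note that a sequence of tilings in $\mathbb{T}^{\prime}$ may converge to $t_{\emptyset}$, but this cannot happen if all the tilings in a sequence have a nonempty tile in common.'' This explains why the tile at infinity is included but does not actually verify that $\widehat{\rho}(\mathbb{T}')$ is closed. Your outline is therefore a genuine completion of what the paper leaves implicit, and your identification of (a)--(b) as the substantive steps is accurate; the local-finiteness lemma you propose is precisely the tool needed for (a) and for confirming that each $\widehat{\rho}(T)$ lands in $\mathbb{H}(\mathbb{H}(\mathbb{S}^M))$ in the first place.

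Your caution about step (b) is warranted and is not a defect in your argument but a gap the paper itself glosses over. With the paper's definition of non-overlapping (intersection of Hausdorff dimension strictly less than $D_H$), it is \emph{not} automatic that a Hausdorff limit of non-overlapping pairs remains non-overlapping: for fractal prototiles one can have $\dim_H\bigl(\tau_n(p)\cap\tau_n'(p')\bigr)<D_H$ for every $n$ while the limiting pair meets in a set of full dimension. To make (b) rigorous one needs, for each pair $p,p'\in\mathcal{P}$, that the set $\{\sigma\in\mathcal{T}:\dim_H(p\cap\sigma p')\geq D_H\}$ is closed in $\mathcal{T}$---i.e.\ that overlapping is an open condition. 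This holds in the classical setting (tiles with nonempty interior, overlap meaning interior intersection) and is expected under the OSC, but the paper neither states nor checks it. You have correctly located the real work.
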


\begin{proof}
Note that a sequence of tilings in $\mathbb{T}^{\prime}$ may converge to $%
t_{\emptyset}$, but this cannot happen if all the tilings in a sequence have
a nonempty tile in common.
\end{proof}

See also \cite{anderson, bedfordScene, sadun, solomyak, wicks} where other,
mainly equivalent, metrics and topologies on various tiling spaces are
defined and discussed.

\subsection{\label{TIFSsec}Tiling iterated function systems}

\begin{definition}
\label{defONE}Let $\mathcal{F}=\{{\mathbb{R}}^{M};f_{1},f_{2},\cdots,f_{N}\}$%
, with $N\geq2$, be an IFS of contractive similitudes where the scaling
factor of $f_{n}$ is $\lambda_{n}=s^{a_{n}}$ where $a_{n}\in\mathbb{N}$ and $%
\gcd\{a_{1},a_{2},\cdots,a_{N}\}=1$ and we define%
\begin{equation*}
a_{\max}=\max\{a_{i}:i=1,2,\dots,N\}.
\end{equation*}
For $x\in{\mathbb{R}}^{M}$, the function $f_{n}:\mathbb{R}^{M}\rightarrow 
\mathbb{R}^{M}$ is defined by 
\begin{equation*}
f_{n}(x)=s^{a_{n}}O_{n}(x)+q_{n}
\end{equation*}
where $O_{n}$ is an orthogonal linear transformation and $q_{n}\in{\mathbb{R}%
}^{M}$. Let $D_{H}(X)$ be the Hausdorff dimension of $X\subset\mathbb{R}^{M}$%
. We require that the graph $\mathcal{G}$ be such that 
\begin{equation}
D_{H}(f_{e}(A^{\overrightarrow{v}(e)})\cap f_{l}(A^{\overrightarrow{v}%
(l)}))<D_{H}(A)  \label{equation0}
\end{equation}
for all $e,l\in\mathcal{E}$ with $e\neq l$. We also require%
\begin{equation}
A^{i}\cap A^{j}=\varnothing  \label{equation01}
\end{equation}
for all $i\neq j$. If these conditions and the requirement on $\mathcal{F}$
above hold, then we say that $\left( \mathcal{F},\mathcal{G}\right) $ is a 
\textbf{tiling iterated function system} or \textbf{TIFS}\textit{.}
\end{definition}

It might be better to require that $(\mathcal{F}$,$\mathcal{G)}$ obeys the
open set condition (OSC) namely, there exists a nonempty bounded open set $U$
so that $f_{i}(U)\subset U$ and $f_{i}(U)\cap f_{j}(U)=\varnothing$ for all $%
i\neq j$, $i,j\in\lbrack N]$.

Note that when $(\mathcal{F}$,$\mathcal{G)}$ obeys the OSC $D_{H}$ can be
described elegantly using a spectral radius, see \cite{mauldin}, as follows.
For given $e,l\in\mathcal{E}$, define%
\begin{equation*}
\mathcal{V}_{e,l}=\left\{ 
\begin{array}{cc}
1\text{ } & \text{if }e\text{ follows }l\text{, } \\ 
0\text{ } & \text{otherwise.}%
\end{array}
\right.
\end{equation*}
If $(\mathcal{F}$,$\mathcal{G)}$ obeys the OSC then $D_{H}\in(0,M]$ is the
unique value such that the spectral radius of the $N\times N$ matrix $%
\mathcal{V}_{i,j}s^{D_{H}a_{j}}$ equals one. In this case we expect, based
on what happens in the case of standard IFS theory, discussed a bit in \cite%
{barnsleyvince}, that Equation \ref{equation0} holds, and our theory
applies. In the case $\left\vert \mathcal{V}\right\vert =1$ the OSC implies
that the Hausdorff dimension of $A$ is strictly greater than the Hausdorff
dimension of the \textit{set of overlap} $\mathcal{O=\cup}_{i\neq
j}f_{i}(A)\cap f_{j}(A)$. Similitudes applied to subsets of the set of
overlap comprise the sets of points at which tiles may meet. In \cite[p.481]%
{bandt} we discuss measures of attractors compared to measures of the set of
overlap.

\subsection{The function $\protect\xi:\Sigma_{\ast}\rightarrow\mathbb{N}_{0}$
and the addresses $\Omega_{k}$}

For $\sigma=\sigma_{1}\sigma_{2}\cdots\sigma_{k}\in\Sigma_{\ast}$ define 
\begin{equation*}
\xi(\sigma)=a_{\sigma_{1}}+a_{\sigma_{2}}+\cdots+a_{\sigma_{k}}\qquad \text{%
and}\qquad\xi^{-}(\sigma)=a_{\sigma_{1}}+a_{\sigma_{2}}+\cdots
+a_{\sigma_{k-1}},
\end{equation*}
and $\xi(\varnothing)=\xi^{-}(\varnothing)=0$. We also write $\sigma
^{-}=\sigma_{1}\sigma_{2}\cdots\sigma_{k-1}$ so that 
\begin{equation*}
\xi^{-}(\sigma)=\xi(\sigma^{-}).
\end{equation*}
Define 
\begin{align*}
\Omega_{k} & =\{\sigma\in\Sigma_{\ast}:\xi^{-}(\sigma)\leq k<\xi (\sigma)\},
\\
\Omega_{0} & =\left[ N\right] ,
\end{align*}
for all $k\in\mathbb{N}$ and $\upsilon\in\mathcal{V}$.

\subsection{Our tilings $\left\{ \Pi\left( \protect\theta\right) :\protect%
\theta\in \Sigma^{\dag}\right\} $}

\begin{definition}
A mapping $\Pi$ from ${\Sigma}^{\dag}$ to collections of subsets of $\mathbb{%
H(R}^{M})$ is defined as follows. For $\theta\in\Sigma_{\ast}^{\dag
},\theta\neq\left\{ \emptyset\right\} ,$%
\begin{equation*}
\Pi(\theta_{1}\theta_{2}...\theta_{k}):=\{f_{_{-\theta_{1}\theta_{2}...%
\theta_{k}}}\pi\left( \sigma\right) :\sigma\in\Omega_{\xi(\theta
_{1}\theta_{2}...\theta_{k})},\sigma_{1}\in\mathcal{E}_{\overrightarrow{%
\upsilon}(\theta_{k}),\ast}\},
\end{equation*}
and for $\theta\in\Sigma_{\infty}^{\dag}$ 
\begin{equation*}
\Pi(\theta):=\bigcup\limits_{k\in\mathbb{N}}\Pi(\theta|k).
\end{equation*}
Also 
\begin{equation*}
\mathbb{T}:=\Pi\left( \Sigma^{\dag}\right) ,\mathbb{T}_{\infty}:=\Pi\left(
\Sigma_{\infty}^{\dag}\right) ,\mathbb{T}_{\ast}:=\Pi\left( \Sigma_{\ast
}^{\dag}\right)
\end{equation*}
\end{definition}

\begin{definition}
We say that $(\mathcal{F},\mathcal{G)}$ is purely self-referential if $%
\mathcal{E}_{v,v}\neq\varnothing$ for all $v\in\mathcal{V}$.
\end{definition}

**If $(\mathcal{F},\mathcal{G)}$ is such that $\mathcal{E}%
_{v,v}\neq\varnothing$ for at least one $v\in\mathcal{V}$, then by composing
functions along paths through vertices for which $\mathcal{E}%
_{v,v}=\varnothing$, assigning indices to these composed functions, and
relabelling the functions and redefining the tiles, we can obtain a
self-referential TIFS which presents the essential action of the system.

\begin{theorem}
\label{thm:three} Let $(\mathcal{F},\mathcal{G)}$ be a TIFS.

\begin{enumerate}
\item Each set $\Pi(\theta)$ in $\mathbb{T}$ is a tiling of a subset of $%
\mathbb{R}^{M}$, the subset being bounded when $\theta\in{\Sigma}_{\ast
}^{\dag}$ and unbounded when $\theta\in{\Sigma}_{\infty}^{\dag}$.

\item For all $\theta\in{\Sigma}_{\infty}^{\dag}$ the sequence of tilings $%
\left\{ \Pi(\theta|k)\right\} _{k=1}^{\infty}$ is nested according to 
\begin{equation}
\Pi(\theta|1)\subset\Pi(\theta|2)\subset\Pi(\theta|3)\subset\cdots\text{ .}
\label{eqthmONE}
\end{equation}

\item If $(\mathcal{F},\mathcal{G)}$ is purely self-referential, then for
all $\theta\in{\Sigma}^{\dag}$, with $\left\vert \theta\right\vert $
sufficiently large, the prototile set for $\Pi(\theta)$ is 
\begin{equation*}
\mathcal{P}:=\{s^{i}A^{v}:i\in\left\{ 1,2,\cdots,a_{\max}\right\} ,v\in%
\mathcal{V}\}.
\end{equation*}

\item \textit{Let }$\mathbb{T}^{\prime}$ to be the set of all tilings with
prototile set $\mathcal{P}$. \textit{The map}%
\begin{equation*}
\Pi:{\Sigma}^{\dag}\rightarrow\mathbb{T\subset T}^{\prime}
\end{equation*}
is continuous from the compact metric space $\left( {\Sigma}^{\dag
},d_{\left\vert \mathcal{F}\right\vert }\right) $ into the compact metric
space $(\mathbb{T}^{\prime},d_{\mathbb{T}^{\prime}})$.

\item For all $\theta\in{\Sigma}_{\infty}^{\dag}$ 
\begin{equation}
\Pi(\theta)=\lim_{k\rightarrow\infty}f_{_{-\theta|k}}(\{\pi\left(
\sigma\right) :\sigma\in\Omega_{\xi(\theta|k)}\}).  \label{eqn:whole}
\end{equation}
\end{enumerate}
\end{theorem}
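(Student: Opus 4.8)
The plan is to prove the five assertions of Theorem \ref{thm:three} more or less in the order stated, since the later parts lean on the earlier ones. Throughout, the central objects are the finite sets $\Omega_k$ of paths that ``straddle'' the scale level $k$, and the observation that $f_{-\theta|k}$ is an expanding similitude with scaling ratio $s^{-\xi(\theta|k)}$, which converts the contractive IFS picture into an expanding (blow-up) one.

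\smallskip

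First, for assertion (1), I would fix $\theta\in\Sigma_\ast^\dag$ of length $k$ and show that $\Pi(\theta)$ is a tiling of $f_{-\theta|k}\bigl(\bigcup\{\pi(\sigma):\sigma\in\Omega_{\xi(\theta|k)},\,\sigma_1\in\mathcal E_{\overrightarrow\upsilon(\theta_k),\ast}\}\bigr)$. The key combinatorial fact is that, restricting to paths beginning at the vertex $\overrightarrow\upsilon(\theta_k)$, the sets $\{\pi(\sigma):\sigma\in\Omega_m\}$ partition $A^{\overrightarrow\upsilon(\theta_k)}$ up to Hausdorff-dimension-$<D_H$ overlaps: this comes from iterating the relation $A=\mathbf FA$ and splitting each $\pi(\omega)=f_\omega(A^{\overrightarrow\upsilon(\omega_{|\omega|})})$ one more step whenever $\xi(\omega)\le m$, stopping precisely when $\xi^-(\sigma)\le m<\xi(\sigma)$. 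That the resulting tiles are genuine tiles (perfect, compact, nonempty, of Hausdorff dimension $D_H$) follows because each is an affine image of some $A^v$, and each $A^v$ has these properties by Theorem \ref{thm:one} together with the standing open-set-type hypothesis; non-overlap of distinct tiles reduces via the similitudes to the defining inequality \eqref{equation0} of a TIFS applied along the longest common prefix of two distinct addresses. Applying the expanding map $f_{-\theta|k}$ preserves all of this and scales the support, which is bounded; the unbounded case for $\theta\in\Sigma_\infty^\dag$ then follows from (2) together with the fact that $\xi(\theta|k)\to\infty$, so the supports exhaust $\mathbb R^M$ (or a blow-up of $A$).

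\smallskip

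For assertion (2), the nesting, the point is to check $\Pi(\theta|k)\subset\Pi(\theta|(k+1))$ directly from the definition: writing $\theta|(k+1)=(\theta|k)\,\theta_{k+1}$, one has $f_{-\theta|(k+1)}=f_{-\theta|k}\circ f_{\theta_{k+1}}^{-1}$, and a tile $f_{-\theta|k}\,\pi(\sigma)$ with $\sigma\in\Omega_{\xi(\theta|k)}$, $\sigma_1\in\mathcal E_{\overrightarrow\upsilon(\theta_k),\ast}$ must be re-expressed as $f_{-\theta|(k+1)}\,\pi(\tau)$ for suitable $\tau$; here one uses that $\sigma$ can be lengthened (by the self-referential edge(s) at the relevant vertex) to a path $\tau$ with $\theta_{k+1}$ as first letter and $\tau\in\Omega_{\xi(\theta|(k+1))}$, using $\pi(\sigma)\supseteq$ the union of $\pi$ of its one-step extensions. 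This is where ``purely self-referential'' (or the parenthetical reduction to it) is used essentially, and it is also the natural place to invoke assertion (3): once $|\theta|$ is large enough that $\xi(\theta|k)\ge a_{\max}$, every tile of $\Pi(\theta)$ is $f_{-\theta|k}\,f_\sigma(A^v)=s^{-(\xi(\theta|k)-\xi(\sigma))}\,(\text{isometry})(A^v)$ with exponent in $\{0,1,\dots,a_{\max}-1\}$ — rescaling by $s$ if one wants the normalization $\{s^i A^v:i\in\{1,\dots,a_{\max}\}\}$ — so the prototile set is exactly $\mathcal P$. I expect assertions (2) and (3) to be the main obstacle: the bookkeeping that each $\Omega_k$-tile refines correctly into $\Omega_{k+1}$-tiles, with first letters and vertices matching the shifted address, is where all the combinatorics of $\xi$, $\xi^-$, and the graph structure has to be handled carefully, and it is easy to be off by one scale level.

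\smallskip

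Assertion (5) is then essentially a restatement of the definition of $\Pi(\theta)$ for $\theta\in\Sigma_\infty^\dag$ combined with (2): since $\Pi(\theta)=\bigcup_k\Pi(\theta|k)$ with the union nested, it equals the limit in $(\mathbb T',d_{\mathbb T'})$ of the partial tilings, and $\Pi(\theta|k)=f_{-\theta|k}(\{\pi(\sigma):\sigma\in\Omega_{\xi(\theta|k)},\sigma_1\in\mathcal E_{\overrightarrow\upsilon(\theta_k),\ast}\})$; the claim as written drops the first-letter restriction, which is harmless in the limit because as $k$ grows the already-placed tiles from earlier levels fill in the other vertex-classes, so every $\pi(\sigma)$ with $\sigma\in\Omega_{\xi(\theta|k)}$ eventually appears (pushed forward by the appropriate $f_{-\theta|k'}$). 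Finally, for assertion (4), continuity of $\Pi$: I would show that if $\theta,\theta'\in\Sigma^\dag$ agree on their first $k$ letters then $\Pi(\theta)$ and $\Pi(\theta')$ agree on the patch coming from level $k$, i.e. they share the common sub-tiling $\Pi(\theta|k)$, whose support has inradius growing like $s^{-\xi(\theta|k)}\to\infty$; in the metric $d_{\mathbb T'}$, built from stereographic projection to $\mathbb S^M$, tilings that agree on a large ball about the origin are close, so $d_{\mathbb T'}(\Pi(\theta),\Pi(\theta'))\to 0$ as the length of agreement $\to\infty$, which is exactly continuity on the compact space $(\Sigma^\dag,d_{|\mathcal F|})$. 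Compactness of the image and of $\mathbb T'$ was already established, so $\Pi$ maps into $\mathbb T'$ as claimed.
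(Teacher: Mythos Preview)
Your proposal is considerably more detailed than the paper's own proof, which consists of a single remark on part (5) only (the other parts are left to the reader, presumably as routine extensions of the case $|\mathcal V|=1$ in \cite{barnsleyvince}). Your overall architecture---refining $A^{\overrightarrow v(\theta_k)}$ by $\Omega_m$, using the expanding similitude $f_{-\theta|k}$, and deducing continuity from agreement on large patches---is sound and matches the intended argument.

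Two points deserve correction. First, in your treatment of (2) you invoke ``purely self-referential'' as essential for the nesting, but the theorem states (2) without that hypothesis, and indeed it is not needed: since $\theta\in\Sigma^\dag$, the edge $\theta_{k+1}$ is automatically compatible with $\theta_k$ in the reversed graph, which forces $\theta_{k+1}\sigma$ to be a legitimate path in $\Sigma_\ast$ whenever $\sigma_1\in\mathcal E_{\overrightarrow v(\theta_k),\ast}$; no self-loops are required. Moreover $\xi(\theta_{k+1}\sigma)=a_{\theta_{k+1}}+\xi(\sigma)$ and $\xi^-(\theta_{k+1}\sigma)=a_{\theta_{k+1}}+\xi^-(\sigma)$, so $\theta_{k+1}\sigma\in\Omega_{\xi(\theta|(k+1))}$ as needed. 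Second, your justification for why the first-letter restriction may be dropped in (5) is not quite right: it is not that the other vertex-classes ``fill in'', but rather---as the paper's remark indicates---that the components $A^w$ with $w\neq\overrightarrow v(\theta_k)$ are disjoint from $A^{\overrightarrow v(\theta_k)}$ (hypothesis \eqref{equation01}), so under the expanding map $f_{-\theta|k}$ their images recede to infinity and contribute only the tile-at-infinity $t_\emptyset$ in the $d_{\mathbb T'}$-limit. With these two adjustments your argument goes through.
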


\begin{proof}
Concerning (5) : Since the components of the attractor are "just touching"
or have empty intersection, we have Equation (\ref{eqn:whole}). The $k^{th}$
term here is the function $f_{_{-\theta|k}}$ applied to the whole attractor
(the union of all of the $A^{v}$) "refined or subdivided" to depth $k$. To
say this another way: the set inside the curly parentheses is the \textbf{%
whole} attractor, the union of its components, partitioned systematically
recursively $k$ times.
\end{proof}

\section{\label{symbolicsec}Symbolic structure:\ canonical symbolic tilings
and symbolic inflation and deflation}

In this section we develop notation and key results concerning what we might
call symbolic tiling theory. In Section \ref{localsec}, we show that these
symbolic structures and relationships are conjugate to counterparts in
self-similar tiling theory. These concepts are also interesting because of
their combinatorial structure.

Define 
\begin{equation*}
\Sigma_{\ast}^{v}=\{\sigma\in\Sigma_{\ast}:\sigma_{1}\in\mathcal{E}_{\ast
,v}\},\Sigma_{\infty}^{v}=\{\sigma\in\Sigma_{\infty}:\sigma_{1}\in \mathcal{E%
}_{\ast,v}\},\Sigma^{v}=\{\sigma\in\Sigma:\sigma_{1}\in \mathcal{E}%
_{\ast,v}\}
\end{equation*}
for all $v\in\mathcal{V}$, and analogously define $\Sigma_{\ast}^{\dag
v},\Sigma_{\infty}^{\dag v},\Sigma^{\dag v}.$ Define what we might call
canonical symbolic tilings 
\begin{equation*}
\Omega_{k}^{v}=\{\sigma\in\Sigma_{\ast}^{v}:\xi^{-}(\sigma)\leq k<\xi
(\sigma)\},
\end{equation*}
for all $k\in\mathbb{N}$ and $\upsilon\in\mathcal{V}$. Note that 
\begin{equation*}
\Omega_{k}=\cup_{v\in\mathcal{V}}\Omega_{k}^{v}\text{ and }%
\Omega_{0}^{v}=\{j\in\lbrack N]:\sigma_{1}\in\mathcal{E}_{\ast,v}\}
\end{equation*}

We write $\Omega_{k}^{(v)}$ to mean any one of the sets $\Omega_{k}$ and $%
\Omega_{k}^{v}$ for $v\in\mathcal{V}$. The following lemma tells us that $%
\Omega_{k+1}^{(v)}$ can be obtained from $\Omega_{k}^{(v)}$ by adding
symbols to the right-hand end of some strings in $\Omega_{k}^{(v)}$ and
leaving the other strings unaltered.

\begin{lemma}
\label{lemma:split}(\textbf{Symbolic Splitting}) For all $k\in\mathbb{N}$
and $v\in\mathcal{V}$ the following relations hold:%
\begin{equation*}
\Omega_{k+1}^{(v)}=\left\{ \sigma\in\Omega_{k}^{(v)}:k+1\leq\xi\left(
\sigma\right) \right\} \cup\left\{ \sigma j\in\Sigma_{\ast}^{\left( v\right)
}:\sigma\in\Omega_{k}^{(v)},k=\xi\left( \sigma\right) \right\} \text{.}
\end{equation*}

\begin{proof}
Follows at once from definition of $\Omega_{k}^{(v)}$.
\end{proof}
\end{lemma}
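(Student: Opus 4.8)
The plan is a routine double inclusion, reducing everything to the defining equivalence ``$\sigma\in\Omega_{m}^{(v)}$ iff $\sigma\in\Sigma_{\ast}^{(v)}$ and $\xi^{-}(\sigma)\le m<\xi(\sigma)$'' together with the standing hypothesis $a_{i}\ge 1$ for every letter $i$. It helps to picture each $\sigma$ as occupying the half-open integer interval $[\xi^{-}(\sigma),\xi(\sigma))$: membership in $\Omega_{m}^{(v)}$ just says this interval contains $m$, so the lemma records how the occupied intervals change as $m$ advances from $k$ to $k+1$.

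First I would show that $\Omega_{k+1}^{(v)}$ is contained in the right-hand side. Take $\sigma\in\Omega_{k+1}^{(v)}$, so $\xi^{-}(\sigma)\le k+1<\xi(\sigma)$. If $\xi^{-}(\sigma)\le k$, then also $\xi^{-}(\sigma)\le k<\xi(\sigma)$, so $\sigma\in\Omega_{k}^{(v)}$, and since $\xi(\sigma)>k+1$ the string $\sigma$ sits in the first set. If instead $\xi^{-}(\sigma)=k+1$, then $\xi^{-}(\sigma)>0$ forces $n:=|\sigma|\ge 2$; writing $\tau=\sigma_{1}\cdots\sigma_{n-1}$ we get $\xi(\tau)=\xi^{-}(\sigma)=k+1$ and $\xi^{-}(\tau)=\xi(\tau)-a_{\sigma_{n-1}}\le k$, so $\tau\in\Omega_{k}^{(v)}$ with $\xi(\tau)=k+1$, and hence $\sigma=\tau\sigma_{n}$ lies in the second set. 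Here I use that the constraint packaged into $\Sigma_{\ast}^{(v)}$ (whether one reads it as $\Sigma_{\ast}$ or as $\Sigma_{\ast}^{v}$) bears only on the initial edge, which $\tau$ shares with $\sigma$.

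For the reverse inclusion I would run the same estimates backwards: a string $\sigma\in\Omega_{k}^{(v)}$ with $\xi(\sigma)>k+1$ satisfies $\xi^{-}(\sigma)\le k<k+1<\xi(\sigma)$, hence stays in $\Omega_{k+1}^{(v)}$; and a string $\sigma\in\Omega_{k}^{(v)}$ with $\xi(\sigma)=k+1$ together with any $\sigma j\in\Sigma_{\ast}^{(v)}$ yields $\xi^{-}(\sigma j)=\xi(\sigma)=k+1$ and $\xi(\sigma j)=\xi(\sigma)+a_{j}\ge k+2>k+1$, so $\sigma j\in\Omega_{k+1}^{(v)}$; appending $j$ on the right again leaves the initial edge, and hence the $\Sigma_{\ast}^{(v)}$ membership, untouched. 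No step is a real obstacle; the only thing to get exactly right is the two cutoffs --- a string of $\Omega_{k}^{(v)}$ survives unchanged precisely when $\xi(\sigma)$ still lies strictly above $k+1$, and is subdivided precisely when $\xi(\sigma)$ equals $k+1$ --- and the uniform treatment of the superscript $(v)$ throughout.
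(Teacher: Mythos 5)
Your double-inclusion argument is correct and is exactly the definition-chase the paper compresses into ``follows at once from the definition of $\Omega_{k}^{(v)}$''; the case split on whether $\xi^{-}(\sigma)\le k$ or $\xi^{-}(\sigma)=k+1$, the use of $a_{i}\ge 1$ to get $\xi^{-}(\tau)\le k$, and the observation that passing to a prefix or appending a letter on the right does not disturb the initial edge (hence the superscript $(v)$) are precisely the points that need checking. One thing you should flag explicitly: what you actually prove has cutoffs ``$\sigma$ survives iff $k+1<\xi(\sigma)$'' and ``$\sigma$ is split iff $\xi(\sigma)=k+1$'', whereas the displayed identity in the lemma reads $k+1\le\xi(\sigma)$ and $\xi(\sigma)=k$. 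Under the paper's definition $\Omega_{k}^{(v)}=\{\sigma:\xi^{-}(\sigma)\le k<\xi(\sigma)\}$ every $\sigma\in\Omega_{k}^{(v)}$ has $\xi(\sigma)\ge k+1$ and none has $\xi(\sigma)=k$, so the literal right-hand side collapses to $\Omega_{k}^{(v)}$ itself (first set everything, second set empty), which is not $\Omega_{k+1}^{(v)}$ in general --- e.g.\ with $a_{1}=1$, $a_{2}=2$ and $k=1$ one has $\Omega_{1}=\{2,11,12\}$ but $\Omega_{2}=\{111,112,12,21,22\}$. So the displayed statement contains an off-by-one slip, and your version is the correct one (and the one consistent with how the lemma is used later, e.g.\ in Corollary \ref{cor:partition} and Example \ref{ex02}); your reverse inclusion quietly treats only the $\xi(\sigma)>k+1$ strings of the first set, which is right for the corrected statement but would not rescue the literal one. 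Apart from making that correction explicit, nothing is missing.
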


This defines symbolic inflation or "splitting and expansion" of $\Omega
_{k}^{(v)}$, some words in $\Omega_{k+1}^{(v)}$ being the same as in $%
\Omega_{k}^{(v)}$ while all the others are "split". The inverse operation is
symbolic deflation or "amalgamation and shrinking", described by a function%
\begin{equation*}
\alpha_{s}:\Omega_{k+1}^{(v)}\rightarrow\Omega_{k}^{(v)}\text{, }\alpha
_{s}(\Omega_{k+1}^{(v)})=\Omega_{k}^{(v)}
\end{equation*}
The operation $\alpha_{s}^{-1}$, whereby $\Omega_{k+1}^{(v)}$ is obtained
from $\Omega_{k}^{(v)}$ by adding symbols to the right-hand end of some
words in $\Omega_{k}^{(v)}$ and leaving other words unaltered, is symbolic
splitting and expansion. In particular, we can define a map $%
\alpha_{s}:\Omega _{k+1}^{(v)}\rightarrow\Omega_{k}^{(v)}$ for all $k\mathbb{%
\in N}_{0}$ according to $\alpha_{s}(\theta)$ is the unique $%
\omega\in\Omega_{k}^{(v)}$ such that $\theta=\omega\beta$ for some $%
\beta\in\Sigma_{\ast}$. Note that $\beta$ may be the empty string. That is,
symbolic amalgamation and shrinking $\alpha_{s}$ is well-defined on $%
\Sigma_{\ast}$.

This tells us that we can use $\Omega_{k}^{(v)}$ to define a partition of $%
\Omega_{m}^{(v)}$ for $m\geq k$. The partition of $\Omega_{k+j}^{(v)}$ is $%
\Omega_{k+j}^{(v)}/\sim$ where $x\sim y$ if $\alpha_{s}^{j}(x)=%
\alpha_{s}^{j}(y)$. To say this another way:

\begin{corollary}
\label{cor:partition}(\textbf{Symbolic Partitions)} For all $m\geq k\geq0,$
the set $\Omega_{k}^{(v)}$ defines a partition $P_{m,k}^{(v)}$ of $\Omega
_{m}^{(v)}$ according to $p\in P_{m,k}^{(v)}$ if and only if there is $%
\omega\in\Sigma_{\ast}$ such that 
\begin{equation*}
p=\{\omega\beta\in\Omega_{m}^{(v)}:\beta\in\Omega_{k}^{(v)}\}.
\end{equation*}
\end{corollary}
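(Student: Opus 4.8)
The plan is to derive Corollary~\ref{cor:partition} directly from the map $\alpha_s$ and its iterates, using only the Symbolic Splitting Lemma~\ref{lemma:split} and the well-definedness of $\alpha_s$ on $\Sigma_\ast$ noted just above. First I would fix $m\geq k\geq 0$ and $v\in\mathcal V$, and recall that by iterating Lemma~\ref{lemma:split} (equivalently, by the remark that $\alpha_s$ extends to a well-defined amalgamation map on $\Sigma_\ast$) every $\sigma\in\Omega_m^{(v)}$ can be written \emph{uniquely} as $\sigma=\omega\beta$ with $\omega=\alpha_s^{\,m-k}(\sigma)\in\Omega_k^{(v)}$ and $\beta\in\Sigma_\ast$; uniqueness is exactly the statement that the prefix $\omega\in\Omega_k^{(v)}$ of a given string $\sigma$ is unique, which follows because the sets $\Omega_k^{(v)}$ are prefix-free (no element of $\Omega_k^{(v)}$ is a proper prefix of another, since $\xi^-(\sigma)\leq k<\xi(\sigma)$ pins down where a word in $\Omega_k^{(v)}$ must end).

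Next I would define the equivalence relation $x\sim y \iff \alpha_s^{\,m-k}(x)=\alpha_s^{\,m-k}(y)$ on $\Omega_m^{(v)}$, which is visibly an equivalence relation, and identify its classes. The class of $\sigma$ is $\{\sigma'\in\Omega_m^{(v)} : \alpha_s^{\,m-k}(\sigma')=\alpha_s^{\,m-k}(\sigma)\}$; writing $\omega=\alpha_s^{\,m-k}(\sigma)$, this is precisely $\{\omega\beta\in\Omega_m^{(v)}:\beta\in\Sigma_\ast\}$, i.e. the set of words in $\Omega_m^{(v)}$ having $\omega$ as their $\Omega_k^{(v)}$-prefix. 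To match the statement of the corollary I would then check that in this description $\beta$ may be taken to range over $\Omega_k^{(v)}$ rather than over all of $\Sigma_\ast$: each such $\beta$ is itself a string that starts at the vertex $\overrightarrow{\upsilon}$ reached by $\omega$ and satisfies $\xi^-(\beta)\leq k<\xi(\beta)$ because $\xi(\omega)>k$ forces $\omega\beta\in\Omega_m^{(v)}$ to split $\beta$ off exactly at the $\Omega_k$-boundary — so the block $\beta$ lies in $\Omega_k^{(\overrightarrow{\upsilon}(\omega))}$, which is one of the sets denoted $\Omega_k^{(v)}$. Conversely any $p$ of the form $\{\omega\beta\in\Omega_m^{(v)}:\beta\in\Omega_k^{(v)}\}$ is the $\sim$-class of any of its members. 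Hence $P_{m,k}^{(v)}:=\Omega_m^{(v)}/\!\sim$ is exactly the collection described, and since equivalence classes partition, $P_{m,k}^{(v)}$ is a partition of $\Omega_m^{(v)}$.

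I expect the only real subtlety — the "main obstacle" in an otherwise bookkeeping argument — to be the bookkeeping around which vertex-indexed set the tail block $\beta$ lives in, and the reconciliation of "$\beta\in\Sigma_\ast$ arbitrary" (from the raw definition of $\alpha_s$) with "$\beta\in\Omega_k^{(v)}$" (as stated in the corollary). The clean way to dispose of this is to prove, once, the \emph{concatenation formula} $\xi(\omega\beta)=\xi(\omega)+\xi(\beta)$ and its one-sided analogue for $\xi^-$, and then observe that $\omega\in\Omega_k^{(v)}$ together with $\omega\beta\in\Omega_m^{(v)}$ and $\alpha_s^{\,m-k}(\omega\beta)=\omega$ force $\xi^-(\beta)\leq k<\xi(\beta)$, i.e. $\beta\in\Omega_k^{(\overrightarrow{\upsilon}(\omega))}$; this is the promised identification of the tail with an element of a canonical symbolic tiling at level $k$ based at the appropriate vertex. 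Everything else is immediate from Lemma~\ref{lemma:split} and the prefix-freeness of $\Omega_k^{(v)}$, so the proof will be short.
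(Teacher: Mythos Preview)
Your core argument is exactly the paper's: iterate Lemma~\ref{lemma:split} (equivalently apply $\alpha_s^{\,m-k}$) to obtain, for each $\theta\in\Omega_m^{(v)}$, a unique prefix $\omega\in\Omega_k^{(v)}$ with $\theta=\omega\beta$ for some $\beta\in\Sigma_\ast$, and read off the partition from this unique-prefix map. The paper's proof stops there.

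Where you go wrong is in the ``reconciliation'' step. The claim that $\omega\in\Omega_k^{(v)}$ and $\omega\beta\in\Omega_m^{(v)}$ force $\xi^-(\beta)\le k<\xi(\beta)$ is false. From $\xi^-(\omega\beta)=\xi(\omega)+\xi^-(\beta)\le m<\xi(\omega)+\xi(\beta)$ you get only
\[
\xi^-(\beta)\le m-\xi(\omega)<\xi(\beta),
\]
i.e.\ $\beta\in\Omega_{\,m-\xi(\omega)}^{(\overrightarrow v(\omega))}$, not $\beta\in\Omega_k^{(v)}$. For a concrete failure, take all $a_i=1$, $k=0$, $m=2$: then $\omega$ has length $1$, $\beta$ has length $2$, and $\xi^-(\beta)=1\not\le 0=k$. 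So the inequality you assert simply does not hold, and no amount of bookkeeping with the concatenation formula will produce it.

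What is actually going on is that the displayed formula in the statement has the roles of $\Sigma_\ast$ and $\Omega_k^{(v)}$ swapped relative to the proof (and relative to the subsequent Corollary~\ref{lem:struct}, where the prefix lies in $\Omega_l^{(v)}$ and the suffix in $\Omega_{k-\xi(\omega)}^{(\overrightarrow v(\omega))}$). The paper's own proof establishes the partition with cells $\{\omega\beta\in\Omega_m^{(v)}:\beta\in\Sigma_\ast\}$ indexed by $\omega\in\Omega_k^{(v)}$, and that is the version you should prove. Drop the reconciliation paragraph entirely; your first paragraph already contains the complete (and correct) argument.
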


\begin{proof}
This follows from Lemma \ref{lemma:split}: for any $\theta\in%
\Omega_{m}^{(v)} $ there is a unique $\omega\in\Omega_{k}^{(v)}$ such that $%
\theta=\omega\beta $ for some $\beta\in\Sigma_{\ast}$. Each word in $%
\Omega_{m}^{(v)}$ is associated with a unique word in $\Omega_{k}^{(v)}$.
Each word in $\Omega _{k}^{(v)}$ is associated with a set of words in $%
\Omega_{m}^{(v)}$.
\end{proof}

According to Lemma \ref{lemma:split}, $\Omega_{k+1}^{(v)}$ may be calculated
by tacking words (some of which may be empty) onto the right-hand end of the
words in $\Omega_{k}^{(v)}$. Now we reverse the description, expressing $%
\Omega_{k}^{(v)}$ as a union of predecessors ($\Omega_{j}^{(v)}$s with $j<k$%
) of $\Omega_{k}^{(v)}$ with words tacked onto their left-hand ends. The
following structural result will reappear (**make explicit) in what follows.

\begin{corollary}
\label{lem:struct}(\textbf{Symbolic Predecessors)} For all $k\geq a_{\max}+l$%
, for all $v\in\mathcal{V}$, for all $l\in\mathbb{N}_{0},$%
\begin{equation*}
\Omega_{k}^{\left( v\right) }=\bigsqcup\limits_{\omega\in\Omega_{l}^{\left(
v\right) }}\omega\Omega_{k-\xi\left( \omega\right) }^{\left( \overrightarrow{%
v}\left( \omega\right) \right) }
\end{equation*}
\end{corollary}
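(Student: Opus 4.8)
The plan is to prove the identity by unwinding the definitions of the sets $\Omega_k^{(v)}$ and then applying induction on $l$, using the Symbolic Predecessors statement itself as the inductive engine together with the Symbolic Splitting lemma. The case $l=0$ is the base: here $\Omega_0^{(v)}$ consists of the single-edge paths starting at $v$ (resp.\ the single edges), so the claimed formula reads $\Omega_k^{(v)}=\bigsqcup_{j}\, j\,\Omega_{k-a_j}^{(\overrightarrow v(j))}$, the union over edges $j$ leaving $v$. This is essentially a restatement of the definition of $\Omega_k^{(v)}$: a path $\sigma=\sigma_1\sigma_2\cdots\sigma_m$ lies in $\Omega_k^{(v)}$ iff $\sigma_1$ leaves $v$ and $\xi^-(\sigma)\le k<\xi(\sigma)$; writing $\sigma=\sigma_1\tau$ with $\tau=S\sigma$, the tail $\tau$ is a path starting at the vertex $\overrightarrow v(\sigma_1)$, and the condition $\xi^-(\sigma)\le k<\xi(\sigma)$ becomes $\xi^-(\tau)\le k-a_{\sigma_1}<\xi(\tau)$, i.e.\ $\tau\in\Omega_{k-a_{\sigma_1}}^{(\overrightarrow v(\sigma_1))}$. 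The hypothesis $k\ge a_{\max}\ge a_{\sigma_1}$ guarantees $k-a_{\sigma_1}\ge 0$, so the right-hand side is a legitimate $\Omega$-set (possibly $\Omega_0$). Disjointness of the union over $\sigma_1$ is immediate since distinct first edges give distinct words.

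For the inductive step, suppose the formula holds for some $l$ with $k\ge a_{\max}+l$, and consider $l+1$ with $k\ge a_{\max}+l+1$. The idea is to take the decomposition at level $l$ and refine each block $\omega\,\Omega_{k-\xi(\omega)}^{(\overrightarrow v(\omega))}$ according to whether $\omega$ already has $\xi(\omega)\ge l+1$ or whether $\xi(\omega)=l$ and $\omega$ must be extended by one more edge — this is exactly the dichotomy of Lemma~\ref{lemma:split} applied with $k$ replaced by $l$ and base vertex $v$. In the first case $\omega\in\Omega_{l+1}^{(v)}$ already and the block is unchanged. In the second case, $\omega\in\Omega_l^{(v)}$ with $\xi(\omega)=l$, and we use the base-case identity (the $l=0$ instance proved above, applied at the vertex $\overrightarrow v(\omega)$ and level $k-l$, which is $\ge a_{\max}$) to split $\Omega_{k-l}^{(\overrightarrow v(\omega))}=\bigsqcup_{j}\, j\,\Omega_{k-l-a_j}^{(\overrightarrow v(j))}$, where $j$ ranges over edges leaving $\overrightarrow v(\omega)$; then $\omega j$ ranges exactly over the elements of $\Omega_{l+1}^{(v)}$ obtained by extending $\omega$, and $\xi(\omega j)=l+a_j$, so $k-\xi(\omega j)=k-l-a_j$. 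Reassembling, $\bigsqcup_{\omega\in\Omega_l^{(v)}}\omega\,\Omega_{k-\xi(\omega)}^{(\overrightarrow v(\omega))}$ regroups into $\bigsqcup_{\omega'\in\Omega_{l+1}^{(v)}}\omega'\,\Omega_{k-\xi(\omega')}^{(\overrightarrow v(\omega'))}$, which is the desired formula at level $l+1$.

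The disjointness of the final union needs a word of care: one must check that $\omega'\,\Omega_{k-\xi(\omega')}^{(\cdot)}$ and $\omega''\,\Omega_{k-\xi(\omega'')}^{(\cdot)}$ are disjoint for distinct $\omega',\omega''\in\Omega_{l+1}^{(v)}$. If neither is a prefix of the other, disjointness is clear; and no element of $\Omega_{l+1}^{(v)}$ can be a proper prefix of another, because $\xi^-\le l+1<\xi$ forces all these words to be prefix-incomparable (extending a word $\omega'$ with $\xi(\omega')>l+1$ by anything nonempty already pushes $\xi^-$ above $l+1$). I expect the main obstacle to be purely bookkeeping: keeping straight the three roles of the indices — the level $l$ of the predecessor, the residual level $k-\xi(\omega)$, and the vertex $\overrightarrow v(\omega)$ at which the residual $\Omega$-set lives — and making sure the bound $k\ge a_{\max}+l$ is exactly what is needed so that every residual level $k-\xi(\omega)$ that appears is nonnegative (indeed $\ge 0$, using $\xi(\omega)\le \xi^-(\omega)+a_{\max}\le l+a_{\max}$ for $\omega\in\Omega_l^{(v)}$). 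No step is deep; the content is entirely in the concatenation structure already encoded in Lemma~\ref{lemma:split} and Corollary~\ref{cor:partition}.
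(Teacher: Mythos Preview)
Your inductive approach is sound in outline, but it takes a longer path than the paper does, and there is a bookkeeping slip in the inductive step that you should fix.

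\textbf{The slip.} In the dichotomy you invoke from Lemma~\ref{lemma:split}, you split $\omega\in\Omega_l^{(v)}$ into the cases ``$\xi(\omega)\ge l+1$'' and ``$\xi(\omega)=l$''. But every $\omega\in\Omega_l^{(v)}$ satisfies $\xi(\omega)\ge l+1$ by definition (since $l<\xi(\omega)$), and $\xi(\omega)=l$ is impossible. The correct split is $\xi(\omega)\ge l+2$ (so $\omega\in\Omega_{l+1}^{(v)}$ already) versus $\xi(\omega)=l+1$ (so $\omega$ must be extended). Correspondingly, in the second case the residual level is $k-\xi(\omega)=k-l-1$, not $k-l$; the base case then needs $k-l-1\ge a_{\max}$, which is exactly your hypothesis $k\ge a_{\max}+l+1$, and $\xi(\omega j)=l+1+a_j$ rather than $l+a_j$. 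With these corrections the induction goes through as you describe. (The confusion is understandable: the statement of Lemma~\ref{lemma:split} in the paper has the same off-by-one.)

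\textbf{Comparison with the paper.} The paper does not induct on $l$ at all. It argues the two inclusions directly: that the right side sits inside $\Omega_k^{(v)}$ is immediate from additivity of $\xi$; for the reverse inclusion, given $\theta\in\Omega_k^{(v)}$ one invokes Corollary~\ref{cor:partition} to produce the unique prefix $\omega\in\Omega_l^{(v)}$ with $\theta=\omega\beta$, and then checks that $\beta\in\Omega_{k-\xi(\omega)}^{\overrightarrow v(\omega)}$. Disjointness comes for free from the uniqueness of $\omega$. This is a two-line argument once Corollary~\ref{cor:partition} is in hand. Your route effectively re-derives the prefix-partition structure of Corollary~\ref{cor:partition} by hand, one level of $l$ at a time, which is valid but redundant; since you already cite Corollary~\ref{cor:partition} at the end, you could just use it directly as the paper does and skip the induction entirely.
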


\begin{proof}
It is easy to check that the r.h.s. is contained in the l.h.s.

Conversely, if $\theta\in\Omega_{k}^{\left( v\right) }$ then there is unique 
$\omega\in\Omega_{l}^{\left( v\right) }$ such that $\theta=\omega\beta$ for
some $\beta\in\Sigma_{\ast}$ by Corollary \ref{cor:partition}. Because $%
\omega\beta\in\Sigma_{\ast}$ it follows that $\beta_{1}$ is an edge that
that starts where the last edge in $\omega$ is directed to, namely the
vertex $\overrightarrow{v}\left( \omega\right) $. Finally, since $\xi\left(
\omega\beta\right) =\xi\left( \omega)+\xi(\beta\right) $ it follows that $%
\beta\in\Omega_{k-\xi\left( \omega\right) }^{\overrightarrow{v}\left(
\omega\right) }$.
\end{proof}

\section{\label{canonical}Canonical tilings and their relationship to $\Pi(%
\protect\theta)$}

\begin{definition}
We define sequences of tilings by 
\begin{equation*}
T_{k}=s^{-k}\pi(\Omega_{k}),\text{ }T_{k}^{v}:=s^{-k}\pi(\Omega_{k}^{v})
\end{equation*}
$k\in\mathbb{N},v\in\mathcal{V}$, to be called the \textbf{canonical tilings 
}of the TIFS $(\mathcal{F},\mathcal{G)}.$
\end{definition}

A canonical tiling may be written as a disjoint union of copies of other
canonical tilings. By a copy of a tiling $T$ we mean $ET$ for some $E\in 
\mathcal{T}$, where $\mathcal{T}$ is the set of all isometries of $\mathbb{R}%
^{M}$ generated by the functions of $\mathcal{F}$ together with
multiplication by $s.$

\begin{lemma}
\label{lem:canonical}For all $k\geq a_{\max}+l$, for all $l\in\mathbb{N}%
_{0}, $ for all $v\in\lbrack N]$ 
\begin{equation*}
T_{k}^{v}=\bigsqcup\limits_{\omega\in\Omega_{l}^{v}}E_{k,\omega}T_{k-\xi%
\left( \omega\right) }^{\overrightarrow{v}\left( \omega\right) }\text{ and }%
T_{k}=\bigsqcup\limits_{\omega\in\Omega_{l}}E_{k,\omega}T_{k-\xi\left(
\omega\right) }^{\overrightarrow{v}\left( \omega\right) }
\end{equation*}
where $E_{k,\omega}=s^{-k}f_{\omega}s^{k-e(\omega)}\in\mathcal{T}$ is an
isometry.
\end{lemma}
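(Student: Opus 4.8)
The plan is to derive this lemma directly from the purely symbolic statement of Corollary \ref{lem:struct} (Symbolic Predecessors) by applying the coding map $\pi$ and the rescaling $s^{-k}$ to both sides, and then checking that the resulting maps $E_{k,\omega}=s^{-k}f_\omega s^{k-\xi(\omega)}$ are genuinely isometries. First I would write $\xi(\omega)=e(\omega)$ (matching the notation in the statement) and recall from Corollary \ref{lem:struct} that for $k\ge a_{\max}+l$ we have the disjoint decomposition $\Omega_k^{(v)}=\bigsqcup_{\omega\in\Omega_l^{(v)}}\omega\,\Omega_{k-\xi(\omega)}^{(\overrightarrow v(\omega))}$. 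Applying $\pi$ and using the intertwining identity $\pi(\omega\beta)=f_\omega\,\pi(\beta)$ on code words (which follows from the relation $f_{\sigma|k}\circ\pi\circ S^k=\pi$ stated in the shift-maps subsection, together with the definition of $\pi$ on $\Sigma_\ast$), we get $\pi(\Omega_k^{(v)})=\bigcup_{\omega\in\Omega_l^{(v)}}f_\omega\,\pi(\Omega_{k-\xi(\omega)}^{(\overrightarrow v(\omega))})$ as a collection of tiles. Multiplying through by $s^{-k}$ and inserting $s^{k-\xi(\omega)}s^{-(k-\xi(\omega))}=\mathrm{Id}$ between $f_\omega$ and $\pi(\cdots)$ yields
\begin{equation*}
s^{-k}\pi(\Omega_k^{(v)})=\bigcup_{\omega\in\Omega_l^{(v)}}\bigl(s^{-k}f_\omega s^{k-\xi(\omega)}\bigr)\bigl(s^{-(k-\xi(\omega))}\pi(\Omega_{k-\xi(\omega)}^{(\overrightarrow v(\omega))})\bigr)=\bigsqcup_{\omega\in\Omega_l^{(v)}}E_{k,\omega}\,T_{k-\xi(\omega)}^{\overrightarrow v(\omega)},
\end{equation*}
which is exactly the claimed identity once both the $T_k$ and the $T_k^v$ versions are read off (the $T_k$ version comes from $\Omega_k=\sqcup_v\Omega_k^v$, or equivalently by taking the $(v)$-free branch of the corollary).

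Next I would verify that $E_{k,\omega}$ is an isometry. Since $f_\omega=f_{\omega_1}\circ\cdots\circ f_{\omega_j}$ is a similitude with scaling ratio $\lambda_{\omega_1}\cdots\lambda_{\omega_j}=s^{a_{\omega_1}}\cdots s^{a_{\omega_j}}=s^{\xi(\omega)}$, the composition $s^{-k}f_\omega s^{k-\xi(\omega)}$ is a similitude with scaling ratio $s^{-k}\cdot s^{\xi(\omega)}\cdot s^{k-\xi(\omega)}=s^0=1$, hence an isometry; and it lies in $\mathcal{T}$ because $\mathcal{T}$ is by definition the group of isometries of $\mathbb{R}^M$ generated by the $f_n$ together with multiplication by $s$, and $E_{k,\omega}$ is visibly such a word. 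Here I should note $k-\xi(\omega)\ge 0$: for $\omega\in\Omega_l^{(v)}$ we have $\xi^-(\omega)\le l<\xi(\omega)$, so $\xi(\omega)\le l+a_{\max}\le k$, which is precisely where the hypothesis $k\ge a_{\max}+l$ is used — it guarantees both that $E_{k,\omega}$ involves only nonnegative powers of $s$ and that $T_{k-\xi(\omega)}^{\overrightarrow v(\omega)}$ is a legitimately-indexed canonical tiling.

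Finally I would address why the union on the right is a \emph{disjoint} union of tilings, i.e. that distinct pieces $E_{k,\omega}T_{k-\xi(\omega)}^{\overrightarrow v(\omega)}$ overlap only in sets of Hausdorff dimension $<D_H$, so that the right-hand side is indeed a tiling equal to $T_k^v$. This is where the TIFS hypotheses enter: the disjointness $\Omega_k^{(v)}=\bigsqcup_\omega\omega\Omega_{k-\xi(\omega)}^{(\overrightarrow v(\omega))}$ is at the symbolic level, and transporting it to tiles requires that $\pi(\omega\Omega_\cdot)$ and $\pi(\omega'\Omega_\cdot)$ for $\omega\ne\omega'$ meet in dimension $<D_H$; this follows by the self-similar (graph-directed) propagation of condition (\ref{equation0}) — $D_H(f_e(A^{\overrightarrow v(e)})\cap f_l(A^{\overrightarrow v(l)}))<D_H(A)$ — down to the appropriate level, together with $A^i\cap A^j=\varnothing$ for $i\ne j$ from (\ref{equation01}), exactly as in Theorem \ref{thm:three}(1) where $\Pi(\theta)$ was already shown to be a tiling. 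I expect this last point — making the "disjoint union of tilings" claim rigorous rather than merely combinatorial — to be the main obstacle, though it is essentially a bookkeeping extension of the argument already invoked for Theorem \ref{thm:three}; everything else is a direct rescaling of Corollary \ref{lem:struct}.
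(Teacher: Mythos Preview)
Your proposal is correct and is exactly the ``direct calculation'' the paper has in mind: apply $s^{-k}\pi$ to the symbolic decomposition of Corollary~\ref{lem:struct}, insert $s^{k-\xi(\omega)}s^{-(k-\xi(\omega))}$, and check the scaling ratio of $E_{k,\omega}$. The paper's own proof is the single phrase ``Direct calculation,'' so you have simply written out what that means.
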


\begin{proof}
Direct calculation.
\end{proof}

\begin{theorem}
\label{piusingts}For all $\theta\in\Sigma_{\ast}^{\dag},$%
\begin{equation*}
\Pi(\theta)=E_{\theta}T_{\xi(\theta)}^{\overrightarrow{v}(\theta_{\left\vert
\theta\right\vert })},
\end{equation*}
where $E_{\theta}=f_{-\theta}s^{\xi(\theta)}$. Also if $l\in\mathbb{N}_{0},$
and $\xi(\theta)\geq a_{\max}+l$, then%
\begin{equation*}
\Pi(\theta)=\bigsqcup\limits_{\omega\in\Omega_{l}^{\overrightarrow{v}%
(\theta_{\left\vert \theta\right\vert })}}E_{\theta,\omega}T_{\xi(\theta
)-\xi\left( \omega\right) }^{\overleftarrow{v}\left( \omega\right) }
\end{equation*}
where $E_{\theta,\omega}=f_{_{-\theta}}f_{\omega}s^{\xi(\theta)-e(\omega)}$
is an isometry.
\end{theorem}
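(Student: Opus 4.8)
The plan is to unwind the definitions on both sides and verify that the two descriptions of $\Pi(\theta)$ coincide as sets of tiles. First I would recall that by definition, for $\theta = \theta_1\theta_2\cdots\theta_k \in \Sigma_\ast^\dag$,
\begin{equation*}
\Pi(\theta) = \{ f_{-\theta}\,\pi(\sigma) : \sigma \in \Omega_{\xi(\theta)},\ \sigma_1 \in \mathcal{E}_{\overrightarrow{v}(\theta_k),\ast} \}.
\end{equation*}
The condition $\sigma_1 \in \mathcal{E}_{\overrightarrow{v}(\theta_k),\ast}$ says precisely that $\sigma$ ranges over $\Omega_{\xi(\theta)}^{\,v}$ with $v = \overrightarrow{v}(\theta_k) = \overrightarrow{v}(\theta_{|\theta|})$. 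Meanwhile the canonical tiling is $T_{\xi(\theta)}^{v} = s^{-\xi(\theta)}\pi(\Omega_{\xi(\theta)}^{v})$, so applying $E_\theta = f_{-\theta}s^{\xi(\theta)}$ gives $E_\theta T_{\xi(\theta)}^{v} = f_{-\theta} s^{\xi(\theta)} s^{-\xi(\theta)} \pi(\Omega_{\xi(\theta)}^{v}) = f_{-\theta}\,\pi(\Omega_{\xi(\theta)}^{v})$, which is exactly $\Pi(\theta)$. So the first identity is essentially a definition-chase; the only genuine checks are that $\xi(\theta)$ in the exponent matches the subscript of $\Omega$, that $E_\theta$ is the composition claimed, and — for the statement to make sense as an equality of \emph{tilings} — that $E_\theta$ is an isometry. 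The latter follows from the TIFS scaling hypothesis: $f_{-\theta}$ is a composition of the inverses $f_{\theta_i}^{-1}$ with scaling ratios $s^{-a_{\theta_i}}$, so $f_{-\theta}$ has scaling ratio $s^{-\sum a_{\theta_i}} = s^{-\xi(\theta)}$, and multiplying by $s^{\xi(\theta)}$ (a similitude of ratio $s^{\xi(\theta)}$) yields total ratio $1$.

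For the second identity I would start from the first, $\Pi(\theta) = E_\theta T_{\xi(\theta)}^{v}$ with $v = \overrightarrow{v}(\theta_{|\theta|})$, and then apply Lemma \ref{lem:canonical} to expand $T_{\xi(\theta)}^{v}$. The hypothesis $\xi(\theta) \geq a_{\max} + l$ is exactly what Lemma \ref{lem:canonical} requires (with $k = \xi(\theta)$), giving
\begin{equation*}
T_{\xi(\theta)}^{v} = \bigsqcup_{\omega \in \Omega_l^{v}} E_{\xi(\theta),\omega}\, T_{\xi(\theta)-\xi(\omega)}^{\overrightarrow{v}(\omega)},
\end{equation*}
where $E_{\xi(\theta),\omega} = s^{-\xi(\theta)} f_\omega s^{\xi(\theta)-\xi(\omega)}$ (using $e(\omega)=\xi(\omega)$). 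Applying $E_\theta = f_{-\theta} s^{\xi(\theta)}$ to both sides and pushing it through the disjoint union, each summand becomes $f_{-\theta} s^{\xi(\theta)} \cdot s^{-\xi(\theta)} f_\omega s^{\xi(\theta)-\xi(\omega)} \cdot T_{\xi(\theta)-\xi(\omega)}^{\overrightarrow{v}(\omega)} = f_{-\theta} f_\omega s^{\xi(\theta)-\xi(\omega)}\, T_{\xi(\theta)-\xi(\omega)}^{\overrightarrow{v}(\omega)}$. Setting $E_{\theta,\omega} = f_{-\theta} f_\omega s^{\xi(\theta)-\xi(\omega)}$ gives the claimed formula. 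I should also check that $E_{\theta,\omega}$ is an isometry: $f_{-\theta}$ has ratio $s^{-\xi(\theta)}$, $f_\omega$ has ratio $s^{\xi(\omega)}$, and $s^{\xi(\theta)-\xi(\omega)}$ has ratio $s^{\xi(\theta)-\xi(\omega)}$, multiplying to $s^{-\xi(\theta)+\xi(\omega)+\xi(\theta)-\xi(\omega)} = s^0 = 1$; and it lies in $\mathcal{T}$ since it is built from the $f_i$, their inverses, and powers of $s$.

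One small point to be careful about: the indexing in the statement uses $\overleftarrow{v}(\omega)$ under the disjoint union for $T$, whereas the natural expansion from Lemma \ref{lem:canonical} (and from Corollary \ref{lem:struct}, which underlies it) produces $\overrightarrow{v}(\omega)$. I would reconcile this by noting that $\theta \in \Sigma_\ast^\dag$ is a path in the reversed graph $\mathcal{G}^\dag$, and correspondingly the $\omega \in \Omega_l^{(\cdot)}$ appearing in the expansion of $\Pi(\theta)$ are read relative to $\mathcal{G}^\dag$, so the roles of $\overleftarrow{v}$ and $\overrightarrow{v}$ are interchanged compared to the $\Sigma$-side statement of Lemma \ref{lem:canonical}; alternatively, since $T_{k-\xi(\omega)}^{\overrightarrow{v}(\omega)}$ and the notation $\Omega_l^{\overrightarrow{v}(\theta_{|\theta|})}$ must be consistent with the path structure, the head/tail labels should be matched to whichever graph the relevant paths live in. The main obstacle is thus not any hard estimate — everything reduces to bookkeeping with the cocycle identity $\xi(\omega\beta) = \xi(\omega) + \xi(\beta)$ and the scaling ratios — but rather getting the decoration of vertices ($\overleftarrow{v}$ versus $\overrightarrow{v}$, and which copy of $\Omega$) exactly right, since the two formulas in the theorem are stated with a mixture of conventions. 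I would verify the vertex bookkeeping carefully against the definition of $\Pi$ and of $\mathcal{E}_{v,\ast}$, $\mathcal{E}_{\ast,v}$ before declaring the proof complete.
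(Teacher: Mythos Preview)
Your approach is essentially identical to the paper's: for the first identity you unwind the definition of $\Pi(\theta)$, insert $s^{\xi(\theta)}s^{-\xi(\theta)}$, and recognise the canonical tiling $T_{\xi(\theta)}^{v}$; for the second you apply Lemma~\ref{lem:canonical} to $T_{\xi(\theta)}^{v}$ and compose the resulting isometries with $E_\theta$. The paper's own proof is terser (it simply says the second part ``follows similarly from Lemma~\ref{lem:canonical}'') but the logic is the same, and your explicit verification that $E_\theta$ and $E_{\theta,\omega}$ have scaling ratio $1$ is a useful addition. Your caution about the $\overleftarrow{v}$ versus $\overrightarrow{v}$ decoration is well placed: Lemma~\ref{lem:canonical} as stated produces $\overrightarrow{v}(\omega)$, while the theorem statement writes $\overleftarrow{v}(\omega)$, so there is a genuine notational inconsistency in the paper that you have correctly spotted rather than a flaw in your argument.
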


\begin{proof}
Writing $\theta=\theta_{1}\theta_{2}...\theta_{k}$ so that $\left\vert
\theta\right\vert =k,$ we have from the definitions 
\begin{align*}
\Pi(\theta_{1}\theta_{2}...\theta_{k}) & =f_{_{-\theta_{1}\theta
_{2}...\theta_{k}}}\{\pi\left( \sigma\right) :\sigma\in\Omega_{\xi
(\theta_{1}\theta_{2}...\theta_{k})}^{\overrightarrow{\upsilon}%
(\theta_{k})}\} \\
& =f_{_{-\theta_{1}\theta_{2}...\theta_{k}}}s^{\xi(\theta_{1}\theta
_{2}...\theta_{k})}s^{-\xi(\theta_{1}\theta_{2}...\theta_{k})}\{\pi
(\sigma):\sigma\in\Omega_{\xi(\theta_{1}\theta_{2}...\theta_{k})}^{%
\overrightarrow{\upsilon}(\theta_{k})}\} \\
&
=E_{\theta_{1}\theta_{2}...\theta_{k}}T_{\xi(\theta_{1}\theta_{2}...%
\theta_{k})}^{\overrightarrow{\upsilon}(\theta_{k})}
\end{align*}
which demonstrates that $\Pi(\theta)=E_{\theta}T_{\xi(\theta)}^{%
\overrightarrow{\upsilon}(\theta_{\left\vert \theta\right\vert })}$where $%
E_{\theta}=f_{-\theta}s^{\xi(\theta)}$.

The last statement of the theorem follows similarly from Lemma \ref%
{lem:canonical}.
\end{proof}

\section{All tilings in $\mathbb{T}^{\infty}$ are quasiperiodic}

We recall from \cite{barnsleyvince} the following definitions. A subset $P$
of a tiling $T$ is called a \textit{patch} of $T$ if it is contained in a
ball of finite radius. A tiling $T$ is \textit{quasiperiodic} (also called
repetitive) if, for any patch $P$, there is a number $R>0$ such that any
disk centered at a point in the support of $T$ and is of radius $R$ contains
an isometric copy of $P$. Two tilings are \textit{locally isomorphic} if any
patch in either tiling also appears in the other tiling. A tiling $T$ is%
\textit{\ self-similar} if there is a similitude $\psi$ such that $\psi(t)$
is a union of tiles in $T$ for all $t\in T$. Such a map $\psi$ is called a 
\textit{self-similarity}.

\begin{theorem}
\label{theoremTHREE} Let $(\mathcal{F},\mathcal{G)}$ be a tiling IFS.

\begin{enumerate}
\item Each tiling in $\mathbb{T}_{\infty}$ is quasiperiodic and each pair of
tilings in $\mathbb{T}_{\infty}$ are locally isomorphic.

\item If $\theta\in{\Sigma}_{\infty}^{\dag}$ is eventually periodic, then $%
\Pi(\theta)$ is self-similar. In fact, if $\theta=\alpha\overline{\beta}$
for some $\alpha,\beta\in{\Sigma}_{\infty}^{\dag}$ then $f_{-\alpha}f_{-%
\beta }\left( f_{-\alpha}\right) ^{-1}\Pi(\theta)$ is a self-similarity.
\end{enumerate}
\end{theorem}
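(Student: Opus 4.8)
The plan is to prove the two parts separately, using the nested-tiling structure from Theorem~\ref{thm:three} and the representation of $\Pi(\theta)$ via canonical tilings from Theorem~\ref{piusingts}.

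For part (1), quasiperiodicity, the key observation is that every $\Pi(\theta)$ with $\theta \in \Sigma_\infty^\dagger$ is an increasing union $\bigcup_k \Pi(\theta|k)$ of patches, each of which is (by Theorem~\ref{piusingts}) an isometric copy $E_{\theta|k} T_{\xi(\theta|k)}^{(v)}$ of a canonical tiling. So any patch $P$ of $\Pi(\theta)$ lies inside some $\Pi(\theta|k)$, hence inside an isometric copy of some canonical tiling $T_n^{v}$. It therefore suffices to show that for each canonical tiling $T_n^{v}$, isometric copies of it recur with bounded gaps inside every $\Pi(\theta')$, $\theta' \in \Sigma_\infty^\dagger$. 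This is where the strong connectedness of $\mathcal{G}$ and Corollary~\ref{lem:struct} (Symbolic Predecessors) enter: for $m$ large, $\Omega_m^{(w)} = \bigsqcup_{\omega \in \Omega_l^{(w)}} \omega\, \Omega_{m-\xi(\omega)}^{(\overrightarrow v(\omega))}$, so every sufficiently deep canonical tiling contains, at controlled locations, isometric copies of $T_{n}^{v}$ for every $v$ reachable appropriately — and by strong connectedness all vertices are reachable within a bounded path length. Translating this symbolic statement through Lemma~\ref{lem:canonical} gives a uniform radius $R$ (depending on $n$, $a_{\max}$, $|\mathcal{F}|$, and the contraction ratio $s$, but not on $\theta'$) within which a copy of $T_n^{v}$ appears around any tile. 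Running this for the finitely many canonical tilings needed to contain $P$ yields the repetitivity radius for $P$. Local isomorphism of any two tilings in $\mathbb{T}_\infty$ follows immediately: any patch of one sits in some $E T_n^{v}$, and by the recurrence argument a copy of $T_n^{v}$ — hence of the patch — appears in the other.

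For part (2), suppose $\theta = \alpha\overline{\beta}$, i.e. $\theta = \alpha_1\cdots\alpha_p\,\overline{\beta_1\cdots\beta_q}$, and set $\psi = f_{-\alpha} f_{-\beta} (f_{-\alpha})^{-1}$. First I would check $\psi$ is a similitude: $f_{-\alpha}$ and $f_{-\beta}$ are compositions of inverses of contractive similitudes, hence expanding similitudes, and the conjugate of a similitude by a similitude is a similitude. The core identity to establish is $\psi\bigl(\Pi(\theta)\bigr) \supseteq$ ``$\Pi(\theta)$ coarsened'' — more precisely, that $\psi$ maps each tile of $\Pi(\theta)$ onto a union of tiles of $\Pi(\theta)$. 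The mechanism is that shifting $\theta$ past the periodic block is implemented geometrically by $f_{-\beta}$ up to the conjugating isometry-free change of origin $f_{-\alpha}$: concretely, using $\Pi(\theta) = \bigcup_k \Pi(\alpha\beta^k)$ from Theorem~\ref{thm:three}(2) and the definition $\Pi(\alpha\beta^{k}) = f_{-\alpha\beta^{k}}\{\pi(\sigma) : \sigma \in \Omega_{\xi(\alpha\beta^k)}^{\overrightarrow v(\cdot)}\}$, one computes $\psi\,\Pi(\alpha\beta^{k}) = f_{-\alpha}f_{-\beta}f_{-\alpha}^{-1} f_{-\alpha}f_{-\beta^{k}}\{\cdots\} = f_{-\alpha}f_{-\beta^{k+1}}\{\cdots\} = \Pi(\alpha\beta^{k+1})$, where the last step is just the matching of index sets $\Omega_{\xi(\alpha\beta^{k})}$ versus $\Omega_{\xi(\alpha\beta^{k+1})}$ together with the Symbolic Predecessors relation, which says the deeper index set refines the shallower one. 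Thus $\psi\,\Pi(\alpha\beta^k) = \Pi(\alpha\beta^{k+1}) \subset \Pi(\theta)$ is a union of tiles of $\Pi(\theta)$, and since each tile of $\Pi(\theta)$ lies in some $\Pi(\alpha\beta^k)$, $\psi$ is a self-similarity.

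The main obstacle I anticipate is in part (1): making the ``bounded gap'' uniform. The symbolic predecessor decomposition gives copies of $T_n^{v}$ inside deep canonical tilings, but one must control \emph{where} — i.e.\ bound the Euclidean diameter of the surrounding patch $E_{k,\omega}T_{k-\xi(\omega)}^{(\cdot)}$ independent of how deep inside $\Pi(\theta')$ one is looking. This requires care: as $k \to \infty$ the maps $f_{-\theta'|k}$ expand without bound, so one must argue \emph{within} a fixed finite canonical patch $T_m^{w}$ (of fixed scale), show that \emph{it} contains a copy of $T_n^{v}$ at some location a bounded distance from any prescribed tile of $T_m^{w}$, and only then transport by the isometry $E_{\theta'|j}$. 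The strong connectedness of $\mathcal{G}$ guarantees such $T_n^{v}$-copies exist in $T_m^{w}$ once $m - n$ exceeds the diameter of $\mathcal{G}$ plus $a_{\max}$; bounding their location uses that $T_m^{w}$ is itself built from a bounded number of pieces at each scale via Lemma~\ref{lem:canonical}. Part (2) is essentially a symbolic bookkeeping computation once the refinement compatibility of the $\Omega$'s is invoked, so I expect no real difficulty there beyond confirming the index-set identity $f_{-\beta}$ realizes the shift on the periodic tail.
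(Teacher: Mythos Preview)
Your proposal is correct and follows the route the paper indicates: the paper's own proof is only the one line ``This uses Theorem~\ref{piusingts}, and follows similar lines to \cite[proof of Theorem 2]{barnsleyvince},'' and your argument is precisely a fleshing-out of that via Theorem~\ref{piusingts}, Lemma~\ref{lem:canonical}, and Corollary~\ref{lem:struct}. The only point to tighten is the displayed equality $\psi\,\Pi(\alpha\beta^{k}) = \Pi(\alpha\beta^{k+1})$ in part~(2): as tilings these are not equal---the left side is a \emph{coarsening} of the right---but you already say as much in the next clause about refinement of the $\Omega$-index sets, so this is cosmetic rather than a gap.
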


\begin{proof}
This uses Theorem \ref{piusingts}, and follows similar lines to \cite[proof
of Theorem 2]{barnsleyvince}.
\end{proof}

\section{\label{localsec}Addresses}

Addresses, both relative and absolute, are described in \cite{barnsleyvince}
for the case $\left\vert \mathcal{V}\right\vert =1$. See also \cite{bandt2}.
Here we add information, and generalize. The relationship between these two
types of addresses is subtle and central to our proof of Theorem \ref{main}.

\subsection{Relative addresses}

\begin{definition}
\label{relativedef}The \textbf{relative address} of $t\in T_{k}^{(v)}$ is
defined to be $\varnothing.\pi^{-1}s^{k}(t)\in\varnothing.\Omega_{k}^{(v)}$.
The relative address\textit{\ }of a tile $t\in T_{k}$ depends on its
context, its location relative to $T_{k}$, and depends in particular on $%
k\in \mathbb{N}_{0}$. Relative addresses also apply to the tiles of $%
\Pi(\theta)$ for each $\theta\in\Sigma_{\ast}^{\dag}$ because $%
\Pi(\theta)=E_{\theta}T_{\xi(\theta)}^{\overleftarrow{\upsilon}%
(\theta_{\left\vert \theta\right\vert })}$ where $E_{\theta}=f_{-\theta}s^{%
\xi(\theta)}$ (by Theorem \ref{piusingts}) is a known isometry applied to $%
T_{\xi(\theta)}$. Thus, the relative address of $t\in\Pi(\theta)$ (relative
to $\Pi(\theta)$) is $\varnothing.\pi ^{-1}f_{-\theta}^{-1}(t)$, for $%
\theta\in\Sigma_{\ast}^{\dag}$.
\end{definition}

\begin{lemma}
The tiles of $T_{k}$ are in bijective correspondence with the set of
relative addresses $\varnothing.\Omega_{k}$. Also the tiles of $T_{k}^{v}$
are in bijective correspondence with the set of relative addresses $%
\varnothing .\Omega_{k}^{v}$.
\end{lemma}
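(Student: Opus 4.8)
The plan is to unwind the definitions and observe that the bijection in question is essentially built into the definition of the relative address. Recall that $T_k = s^{-k}\pi(\Omega_k)$ and $T_k^v = s^{-k}\pi(\Omega_k^v)$, so a tile of $T_k$ is by construction a set of the form $s^{-k}\pi(\sigma)$ for some $\sigma \in \Omega_k$, and its relative address is declared in Definition \ref{relativedef} to be $\varnothing.\pi^{-1}s^k(t) \in \varnothing.\Omega_k$. Thus the assignment $t \mapsto \varnothing.\pi^{-1}s^k(t)$ is surjective by the definition of $T_k$ as $s^{-k}\pi(\Omega_k)$; what remains is to check it is well-defined (single-valued) and injective. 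Both reduce to the single claim that $\pi$ restricted to $\Omega_k$ is injective, i.e. that $\pi(\sigma) = \pi(\sigma')$ for $\sigma,\sigma' \in \Omega_k$ forces $\sigma = \sigma'$.

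First I would establish injectivity of $\pi$ on $\Omega_k$. The key input is the TIFS hypothesis: by Equation (\ref{equation0}), $D_H(f_e(A^{\overrightarrow v(e)}) \cap f_l(A^{\overrightarrow v(l)})) < D_H(A)$ for distinct edges $e \neq l$, and by Equation (\ref{equation01}), the components $A^i$ are pairwise disjoint. I would argue by induction on word length, or more directly: if $\sigma = \sigma_1\cdots\sigma_m$ and $\sigma' = \sigma'_1\cdots\sigma'_n$ are both in $\Omega_k$ with $\pi(\sigma)=\pi(\sigma')$, then $\pi(\sigma)$ and $\pi(\sigma')$ are both $D_H(A)$-dimensional sets (they are similitude copies of components of $A$), so their intersection — being all of $\pi(\sigma)$ — has full dimension $D_H(A)$; by Equation (\ref{equation0}) this is impossible unless $\sigma_1 = \sigma'_1$. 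Cancelling $f_{\sigma_1}$ (an invertible similitude) and noting that $S\sigma$ and $S\sigma'$ are again admissible paths, one repeats the argument. The membership condition $\xi^-(\sigma) \le k < \xi(\sigma)$ defining $\Omega_k$ guarantees that neither word is a proper prefix of the other (if one were, the shorter would already satisfy $k < \xi$ at an earlier stage, contradicting $\xi^- \le k$ for the longer), so the induction terminates with $\sigma = \sigma'$. Since $s^k$ is a bijection of $\mathbb{R}^M$, injectivity of $t \mapsto s^k(t) \mapsto \pi^{-1}s^k(t)$ follows.

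Then I would record that the same argument applies verbatim with $\Omega_k$ replaced by $\Omega_k^v$, since $\Omega_k^v \subset \Omega_k$ is just the subset of words whose first edge lies in $\mathcal{E}_{\ast,v}$; the restriction of an injection is an injection, and surjectivity onto $\varnothing.\Omega_k^v$ is again immediate from $T_k^v := s^{-k}\pi(\Omega_k^v)$. This gives the bijective correspondence between the tiles of $T_k^v$ and $\varnothing.\Omega_k^v$.

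The main obstacle is the injectivity of $\pi$ on $\Omega_k$, and within that the subtle point is handling the case where the two code words have different lengths — one must verify that no word in $\Omega_k$ is a prefix of another, which is exactly what the two-sided inequality $\xi^-(\sigma) \le k < \xi(\sigma)$ buys us, and that the dimension estimate (\ref{equation0}) really does preclude a lower-dimensional overlap from equalling a full-dimensional tile. Everything else is bookkeeping with the definitions of $T_k^{(v)}$, $\pi$, and relative addresses.
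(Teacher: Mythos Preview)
Your proposal is correct and follows essentially the same approach as the paper: observe that $s^{-k}\pi$ maps $\Omega_k$ onto $T_k$ by definition and then argue injectivity of $\pi$ on $\Omega_k$. The paper's proof merely asserts this injectivity in one line, whereas you supply the actual argument (via the dimension condition~(\ref{equation0}) and the prefix-free property of $\Omega_k$), so your version is a strict elaboration of the paper's sketch rather than a different route.
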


\begin{proof}
We have $T_{k}=s^{-k}\pi(\Omega_{k})$ so $s^{-k}\pi$ maps $\Omega_{k}$ onto $%
T_{k}$. Also the map $s^{-k}\pi:\Omega_{k}\rightarrow T_{k}$ is one-to-one:
if $\beta\neq\gamma,$ for $\beta,\gamma\in\Sigma_{\ast}$ then $f_{\beta
}(A)\neq f_{\gamma}(A)$ because $t=$ $s^{-k}\pi(\beta)=s^{-k}\pi(\gamma)$%
with $\beta,\gamma\in T_{k}$ implies $\beta=\gamma$.
\end{proof}

For precision we should write "the relative address of $t$ relative to $%
T_{k} $" or equivalent: however, when the context $t\in T_{k}$ is clear, we
may simply refer to "the relative address of $t$".

\begin{example}
\label{ex01}(Standard 1D binary tiling) For the IFS $\mathcal{F}_{0}=\{%
\mathbb{R};f_{1},f_{2}\}$ with $f_{1}(x)=0.5x,f_{2}(x)=$ $0.5x+0.5$ we have $%
\Pi(\theta)$ for $\theta\in\Sigma_{\ast}^{\dag}$ is a tiling by copies of
the tile $t=[0,0.5]$ whose union is an interval of length $2^{\left\vert
\theta\right\vert }$ and is isometric to $T_{\left\vert \theta\right\vert }$
and represented by $tttt....t$ with relative addresses in order from left to
right 
\begin{equation*}
\varnothing.111...11,\varnothing.111...12,\varnothing
.111...21,....,\varnothing.222...22,
\end{equation*}
the length of each string (address) being $\left\vert \theta\right\vert +1.$
Notice that here $T_{k}$ contains $2^{\left\vert \theta\right\vert }-1$
copies of $T_{0}$ (namely tt) where a copy is $ET_{0}$ where $E\in\mathcal{T}%
_{\mathcal{F}_{0}}$, the group of isometries generated by the functions of $%
\mathcal{F}_{0}$.
\end{example}

\begin{example}
\label{ex02}(Fibonacci 1D tilings) $\mathcal{F}_{1}\mathcal{=\{}%
ax,a^{2}x+1-a^{2},a+a^{2}=1,$ $a>0\},$ $\mathcal{T=T}_{\mathcal{F}_{1}}$ is
the largest group of isometries generated by $\mathcal{F}_{1}$. The tiles of 
$\Pi(\theta)$ for $\theta\in\Sigma_{\ast}^{\dag}$ are isometries that belong
to $\mathcal{T}_{\mathcal{F}_{1}}$ applied to the tiling of [0,1] provided
by the IFS, writing the tiling $\Pi(\varnothing)=T_{0}$ as $ls$ where $l$ is
a copy of $[0,a]$ and (here) $s$ is a copy of $[0,a^{2}]$ we have:

$T_{0}=ls$ has relative addresses $\varnothing.1,\varnothing.2$ (i.e. the
address of $l$ is $1$ and of $s$ is $2$)

$T_{1}=lsl$ has relative addresses $\varnothing.11,\varnothing.12,%
\varnothing .2$

$T_{2}=lslls$ has relative addresses $\varnothing.111,\varnothing
.112,\varnothing.12,\varnothing.21,\varnothing.22$

$T_{3}=lsllslsl$ has relative addresses $\varnothing.1111,\varnothing
.1112,\varnothing.112,\varnothing.121,\varnothing.122,\varnothing
.211,\varnothing.212,22$

We remark that $T_{k}$ comprises $F_{k+1}$ distinct tiles and contains
exactly $F_{k}$ copies (under maps of $\mathcal{T}_{\mathcal{F}_{1}}$) of $%
T_{0}$, where $\{F_{k}:k\in\mathbb{N}_{0}\}$ is a sequence of Fibonacci
numbers $\{1,2,3,5,8,13,21,...\}$.

Note that $T_{4}=lsllslsllslls$ contains two "overlapping" copies of $lslls$.
\end{example}

\subsection{Absolute addresses}

The set of \textit{absolute addresses} associated with $(\mathcal{F},%
\mathcal{G)}$ is 
\begin{equation*}
\mathbb{A}:=\{\theta.\omega:\theta\in\Sigma_{\ast}^{\dag},\,\omega\in
\Omega_{\xi(\theta)}^{\overleftarrow{\upsilon}(\theta_{\left\vert
\theta\right\vert })},\,\theta_{\left\vert \theta\right\vert }\neq\omega
_{1}\}.
\end{equation*}
Define $\widehat{\pi}:\mathbb{A\rightarrow\{}t\in T:T\in\mathbb{T\}}$ by 
\begin{equation*}
\widehat{\pi}(\theta.\omega)=f_{-\theta}.f_{\omega}(A).
\end{equation*}
The condition $\theta_{\left\vert \theta\right\vert }\neq\omega_{1}$ is
imposed. We say that $\theta.\omega$ is an \textit{absolute address} of the
tile $f_{-\theta}.f_{\omega}(A)$. It follows from Definition \ref{defONE}
that the map $\widehat{\pi}$ is surjective: every tile of $\mathbb{\{}t\in
T:T\in\mathbb{T\}}$ possesses at least one absolute address.

In general a tile may have many different absolute addresses. The tile $%
[1,1.5]$ of Example \ref{ex01} has the two absolute addresses $1.21$ and $%
21.211$.

\subsection{Relationship between relative and absolute addresses}

\begin{theorem}
If $t\in\Pi(\theta)\backslash\Pi(\varnothing)$ with $\theta\in\Sigma_{\ast
}^{\dag}$ has relative address $\omega$ relative to $\Pi(\theta),$ then an
absolute address of $t$ is $\theta_{1}\theta_{2}...\theta_{l}.S^{\left\vert
\theta\right\vert -l}\omega$ where $l\in\mathbb{N}$ is the unique index such
that 
\begin{equation}
t\in\Pi(\theta_{1}\theta_{2}...\theta_{l})\text{ and }t\notin\Pi(\theta
_{1}\theta_{2}...\theta_{l-1})  \label{*eqn}
\end{equation}

(If $t\in\Pi(\varnothing)$, then the unique absolute address of $t$ is $%
\emptyset.n$ for some $n\in N$.)
\end{theorem}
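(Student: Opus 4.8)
The plan is to decode both addresses of $t$ into compositions of the maps of $\mathcal{F}$ applied to components of the attractor, to track $t$ down the nested chain $\Pi(\theta|1)\subseteq\Pi(\theta|2)\subseteq\cdots\subseteq\Pi(\theta|k)$, $k:=\left\vert\theta\right\vert$, and to read the absolute address off at the level $l$ where $t$ first appears. By Definition \ref{relativedef}, the hypothesis that $t\in\Pi(\theta)$ has relative address $\omega$ says exactly $t=f_{-\theta}\pi(\omega)$ with $\omega\in\Omega_{\xi(\theta)}$ (together with the vertex condition built into the definition of $\Pi$); and by the definition of $\widehat{\pi}$, the conclusion to be proved is precisely that, with $l$ as named in the statement, $t=f_{-\theta_{1}\cdots\theta_{l}}f_{S^{k-l}\omega}(A)=f_{-\theta_{1}\cdots\theta_{l}}\pi(S^{k-l}\omega)$, that $S^{k-l}\omega\in\Omega_{\xi(\theta_{1}\cdots\theta_{l})}$, and that $\theta_{l}\neq(S^{k-l}\omega)_{1}$ --- the last two being exactly what membership of $\theta_{1}\cdots\theta_{l}.S^{k-l}\omega$ in $\mathbb{A}$ requires. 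The chain $\Pi(\theta|1)\subseteq\cdots\subseteq\Pi(\theta|k)$ is Theorem \ref{thm:three}(2) applied to any infinite extension of $\theta$ in $\mathcal{G}^{\dag}$ (one exists because $\mathcal{G}$ is strongly connected), and since $t\in\Pi(\theta)\backslash\Pi(\varnothing)$ the index $l:=\min\{j\geq1:t\in\Pi(\theta|j)\}$ is well defined with $t\notin\Pi(\theta|(l-1))$.

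The heart of the matter is a one-step descent statement. For $1\leq j\leq k$ write a generic tile of $\Pi(\theta|j)$ as $f_{-\theta|j}\pi(\rho)$ with $\rho\in\Omega_{\xi(\theta|j)}$ (and the vertex condition), and use $f_{-\theta|j}=f_{-\theta|(j-1)}\circ f_{\theta_{j}}^{-1}$. If $\rho_{1}=\theta_{j}$, then $f_{\theta_{j}}^{-1}\pi(\rho)=\pi(S\rho)$, and because $\rho\in\Omega_{\xi(\theta|j)}$ with $\xi(\theta|j)=\xi(\theta|(j-1))+a_{\theta_{j}}\geq a_{\theta_{j}}$ one checks that $\left\vert\rho\right\vert\geq2$ (a single-edge $\rho=\theta_{j}$ would fail $\rho\in\Omega_{\xi(\theta|j)}$) and $S\rho\in\Omega_{\xi(\theta|(j-1))}$ with the correct vertex, so the tile already lies in $\Pi(\theta|(j-1))$. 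If $\rho_{1}\neq\theta_{j}$, the tile does not lie in $\Pi(\theta|(j-1))$: since the chain is a chain of collections of tiles in which $\Pi(\theta|(j-1))$ is an honest sub-tiling of $\Pi(\theta|j)$, a tile of $\Pi(\theta|j)$ belongs to $\Pi(\theta|(j-1))$ if and only if it is contained in $\mathrm{supp}\,\Pi(\theta|(j-1))$ (otherwise it would have $D_{H}$-dimensional intersection with some tile of $\Pi(\theta|(j-1))$, contradicting non-overlap and local finiteness of the tiling); but inside the attractor component carrying $\Pi(\theta|j)$ the set $\mathrm{supp}\,\Pi(\theta|(j-1))$ is exactly the first-level piece cut out by the edge $\theta_{j}$, whereas $\pi(\rho)$ --- of Hausdorff dimension $D_{H}$, being a similitude image of a component --- lies in the distinct first-level piece cut out by $\rho_{1}$, and two distinct first-level pieces meet in dimension $<D_{H}$ by the TIFS condition (\ref{equation0}). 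Hence $\pi(\rho)$, and so the tile, cannot fit inside $\mathrm{supp}\,\Pi(\theta|(j-1))$.

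Iterating this from $t=f_{-\theta}\pi(\omega)$: $t$ drops from level $j$ to level $j-1$ precisely when the relative-address string currently carried by $t$ begins with $\theta_{j}$, the string then being advanced by one application of $S$. Therefore $t$ drops exactly while $\omega_{1}=\theta_{k},\ \omega_{2}=\theta_{k-1},\dots$, arriving at level $l$ with $\omega_{i}=\theta_{k-i+1}$ for $1\leq i\leq k-l$ (the hypothesis $t\notin\Pi(\varnothing)$ ensuring the process halts at some $l\geq1$), with $t=f_{-\theta_{1}\cdots\theta_{l}}\pi(S^{k-l}\omega)$ and $S^{k-l}\omega\in\Omega_{\xi(\theta_{1}\cdots\theta_{l})}$; moreover $(S^{k-l}\omega)_{1}\neq\theta_{l}$, for if it equalled $\theta_{l}$ then, $S^{k-l}\omega$ lying in $\Omega_{\xi(\theta_{1}\cdots\theta_{l})}$, we would get $\left\vert S^{k-l}\omega\right\vert\geq2$ and the descent statement would put $t$ in $\Pi(\theta|(l-1))$, against the choice of $l$. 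Substituting into the definition of $\widehat{\pi}$ gives $\widehat{\pi}(\theta_{1}\cdots\theta_{l}.S^{k-l}\omega)=f_{-\theta_{1}\cdots\theta_{l}}\pi(S^{k-l}\omega)=t$, and the three facts just recorded are exactly the conditions defining $\mathbb{A}$; so $\theta_{1}\cdots\theta_{l}.S^{k-l}\omega$ is an absolute address of $t$. For the parenthetical, $t\in\Pi(\varnothing)$ forces $l=0$, so $t=\pi(n)$ for a unique $n\in[N]=\Omega_{0}$ ($\pi$ being injective on $\Omega_{0}$, by (\ref{equation0})), and $\widehat{\pi}(\emptyset.n)=f_{n}(A)=\pi(n)=t$ exhibits $\emptyset.n$ as an absolute address; it is the only one, because any $\phi.\psi\in\mathbb{A}$ with $\phi\neq\varnothing$ has $\phi_{\left\vert\phi\right\vert}\neq\psi_{1}$, so the converse half of the descent statement shows $t=f_{-\phi}\pi(\psi)\notin\Pi(\phi|(\left\vert\phi\right\vert-1))$, and hence, by nesting down to $\Pi(\phi|0)=\Pi(\varnothing)$, $t\notin\Pi(\varnothing)$, a contradiction.

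The step I expect to be the main obstacle is the converse half of the one-step descent statement --- certifying that $t$ genuinely fails to be a tile of $\Pi(\theta|(j-1))$ when the leading edge of its relative address differs from $\theta_{j}$. This rests on two points that need care: the equivalence ``a tile of $\Pi(\theta|j)$ lies in $\Pi(\theta|(j-1))$ iff it is contained in $\mathrm{supp}\,\Pi(\theta|(j-1))$'', which uses the sub-tiling relation along the chain together with non-overlap and local finiteness; and the TIFS overlap condition (\ref{equation0}), which is precisely what forbids a $D_{H}$-dimensional tile from sitting inside the wrong first-level piece of a component. Granting these, the remaining work --- the $\xi$- and vertex-bookkeeping that keeps the intermediate strings in the right $\Omega$'s, and the verification that the cancellations never run off the end of the address --- is routine, each point being forced by the defining inequalities of $\Omega_{\xi(\cdot)}$.
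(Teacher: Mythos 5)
Your proposal is correct and takes essentially the same route as the paper: locate the unique level $l$ via the nested chain $\Pi(\varnothing)\subset\Pi(\theta|1)\subset\cdots\subset\Pi(\theta)$ and then read the absolute address off as $\theta_{1}\cdots\theta_{l}$ followed by the shifted relative address $S^{|\theta|-l}\omega$. The only difference is one of detail: where the paper simply asserts that the symbols $\theta_{|\theta|},\dots,\theta_{l+1}$ must cancel against $\omega_{1},\dots,\omega_{|\theta|-l}$, you justify this by your one-step descent lemma (a tile of $\Pi(\theta|j)$ lies in $\Pi(\theta|(j-1))$ exactly when its relative address begins with $\theta_{j}$, using the non-overlap condition (\ref{equation0})), thereby filling in a step the paper leaves implicit, and you also supply the uniqueness argument for the parenthetical case $t\in\Pi(\varnothing)$.
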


\begin{proof}
Recalling that%
\begin{equation*}
\Pi(\varnothing)\subset\Pi(\theta_{1})\subset\Pi(\theta_{1}\theta_{2})%
\subset...\subset\Pi(\theta_{1}\theta_{2}...\theta_{\left\vert \theta
\right\vert -1})\subset\Pi(\theta),
\end{equation*}
we have disjoint union 
\begin{equation*}
\Pi(\theta)=\Pi(\varnothing)\cup\left( \Pi(\theta_{1})\backslash
\Pi(\varnothing)\right) \cup\left( \Pi(\theta_{1}\theta_{2})\backslash
\Pi(\theta_{1})\right) \cup...\cup\left( \Pi(\theta)\backslash\Pi(\theta
_{1}\theta_{2}...\theta_{\left\vert \theta\right\vert -1})\right) .
\end{equation*}
So there is a unique $l$ such that Equation (\ref{*eqn}) is true. Since $%
t\in\Pi(\theta)$ has relative address $\omega$ relative to $\Pi(\theta)$ we
have 
\begin{equation*}
\omega=\varnothing.\pi^{-1}f_{-\theta}^{-1}(t)
\end{equation*}
and so an absolute adddress of $t$ is 
\begin{equation*}
\theta.\omega|_{cancel}=\theta.\pi^{-1}f_{-\theta}^{-1}(t)|_{cancel}
\end{equation*}
where $|_{cancel}$ means equal symbols on either side of $"."$ are removed
until there is a different symbol on either side. Since $t\in\Pi(\theta
_{1}\theta_{2}...\theta_{l})$ the terms $\theta_{l+1}\theta_{l+2}...\theta_{%
\left\vert \theta\right\vert }$ must cancel yielding the absolute address%
\begin{equation*}
\theta.\omega|_{cancel}=\theta_{1}\theta_{2}...\theta_{l}.\omega
_{|\theta|-l+1}...\omega_{|\omega|}
\end{equation*}
\end{proof}

\section{\label{rigidsec1}Local rigidity and its consequences}

\subsection{Definition of locally rigid}

\begin{figure}[tbp]
\centering%
\includegraphics[
height=2.0193in,
width=2.0193in.
]{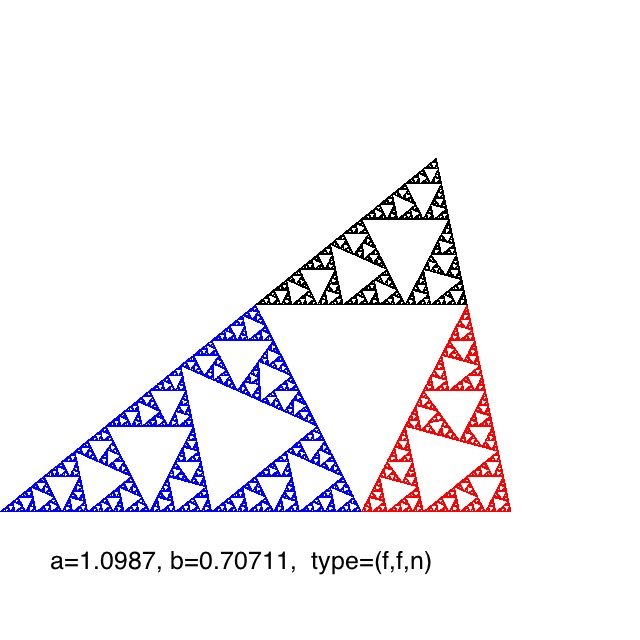}
\caption{Attractor of a rigid TIFS.}
\label{rigid}
\end{figure}

Let $\mathcal{T}$ be the group of isometries generated by the set of maps of 
$\mathcal{F}$, and let $\mathcal{U}$ be the group of all isometries on $%
\mathbb{R}^{M}$. Let $\mathcal{T}^{\prime }\subset \mathcal{T}$ be the
groupoid of isometries of the form $f_{-\theta }f_{\sigma }$ where $\sigma
\in \Sigma _{\ast },\theta \in \Sigma _{\ast }^{\dag }$ and $\overleftarrow{v%
}(\sigma _{1})\in \overleftarrow{v}(\theta _{\left\vert \theta \right\vert
}).$

\begin{definition}
\label{localdef} The family of tilings $\mathbb{T}:=\left\{ \Pi (\theta
):\theta \in \Sigma ^{\dag }\right\} ,$ and the TIFS $(\mathcal{F},\mathcal{%
G)}$, are said to be\textbf{\ locally rigid} when the following two
statements are true: (i) if $E\in \mathcal{T}$ is such that $T_{0}^{v}\cap
E_{2}T_{0}^{w}$ tiles $A^{v}\cap EA^{w}$ then $E=id$ and $v=w$; (ii) there
is only one symmetry of each $A^{v}$ contained in $\mathcal{T}^{\prime }$.

The TIFS $(\mathcal{F},\mathcal{G)}$ is said to be \textbf{rigid} if
statements (i) and (ii) are true when $\mathcal{T}^{\prime }$ is replaced by 
$\mathcal{U}$.
\end{definition}

Figure \ref{rigid} illustrates the attractor of a locally rigid system. The
TIFS in Example \ref{ex02} is not rigid but it is locally rigid. The notion
of a rigid tiling was introduced for the case $\left\vert \mathcal{V}%
\right\vert =1$ in \cite{barnsleyvince}.

\begin{newrigid}

\subsection{Tighter definition of locally rigid}

The following definition is less restrictive than Definition \ref{localdef}.
It is related to the concept of "neighbor maps" \cite{bandtneighbor1,
bandtneighbor2, bandtneighbor3}. Neighbor maps are important because of
their connection to the OSC. **It should probably replace Definition \ref%
{localdef}, which will mean redefining $\mathcal{T}$ as a groupoid in the
set of isometries generated by $\mathcal{F}$.

\begin{definition}
\label{lrigid2}The TIFS $(\mathcal{F},\mathcal{G)}$ is defined to be locally
rigid if two conditions hold:

(i) For all $i\neq j$ there are $k\neq i$ and $l\neq j$ so that (a) 
\begin{equation*}
f_{j}^{-1}f_{k}(A^{\overleftarrow{v}(k)})\cap f_{i}^{-1}f_{l}(A^{%
\overleftarrow{v}(l)})\neq f_{i}^{-1}f_{l}(A^{\overleftarrow{v}(l)})
\end{equation*}%
There are obvious restrictions on $i$ and $j$ implied by $\mathcal{G}$. In
the language of Bandt, this says that $f_{j}^{-1}f_{k}$ and $f_{i}^{-1}f_{l}$
are proper neighbor maps whose images do not agree.

(ii) There is only one symmetry of each $A^{v}$ contained in $\mathcal{T}$.
(This is to ensure that how any copy of $A^{v}$ occuring in any scaled
tiling $f_{i}^{-1}\Pi(\theta)$ ($i$ as allowed by $\mathcal{G}$) is tiled is
unequivocal.) Clearly this condition is met if each $A^{v}$ has only one
symmetry.
\end{definition}

**Examples and discussion could go here.

\begin{theorem}
Let $(\mathcal{F},\mathcal{G)}$ be a tiling iterated function system. Then
the following statements are equivalent.

(I) $(\mathcal{F},\mathcal{G)}$ is locally rigid (Definition \ref{lrigid2})

(II) The inflation function, mapping $\Pi(\theta)$ to $\Pi(S\theta),$ and
the associated inverse maps, mapping $\Pi(\theta)$ to $\Pi(n\theta)$ (as
allowed) are well defined.

(III) $\Pi:\Sigma^{\dag}\rightarrow\mathbb{T}$ is injective.

(IV) $\Pi(\theta)=E\Pi(\psi)$ if and only if $E$ is a neighbor map etc...
\end{theorem}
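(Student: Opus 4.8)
The plan is to prove the cycle of implications $(I)\Rightarrow(III)\Rightarrow(II)\Rightarrow(I)$, and then handle $(IV)$ separately as a refinement of the characterization of when $E\Pi(\psi)=\Pi(\theta)$. The common thread in all of these is Theorem \ref{piusingts}, which expresses $\Pi(\theta)=E_\theta T_{\xi(\theta)}^{\overrightarrow{v}(\theta_{|\theta|})}$ with $E_\theta=f_{-\theta}s^{\xi(\theta)}$, together with the nested-union structure (Theorem \ref{thm:three}(2)) and the symbolic predecessor decomposition (Corollary \ref{lem:struct}). Informally, $\Pi(\theta)$ is the whole attractor, inflated by $f_{-\theta}$ and subdivided to combinatorial depth $\xi(\theta)$; recovering $\theta$ from the geometric tiling amounts to being able to read off, at each stage, which tile of the coarser tiling a given patch sits inside, and this is exactly what condition (i) of Definition \ref{lrigid2} guarantees is unambiguous (no proper neighbor map identifies one subtile-patch with another), while condition (ii) removes the residual ambiguity coming from internal symmetries of the $A^v$.

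First I would prove $(I)\Rightarrow(III)$. Suppose $\Pi(\theta)=\Pi(\psi)$ as sets of tiles in $\mathbb{R}^M$; I want $\theta=\psi$. Both are unbounded iff the corresponding address is infinite, so WLOG both are infinite (the finite case is the same argument truncated). Using the nesting $\Pi(\theta|1)\subset\Pi(\theta|2)\subset\cdots$, the patch $\Pi(\theta|k)$ is determined by $\Pi(\theta)$ up to the ambiguity described in Definition \ref{lrigid2}: the "deflation" operation that passes from a tiling to its unique coarsening is well-defined precisely because condition (i) says two distinct subtiles $f_j^{-1}f_k(A^{\overleftarrow v(k)})$ and $f_i^{-1}f_l(A^{\overleftarrow v(l)})$, sitting inside copies of $A$ amalgamated differently, cannot have equal images — so the way a given tiling is built from a coarser one by subdivision is unique. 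Condition (ii) ensures the isometry $E_\theta$ realizing the coarser level is itself unique. Iterating, $\theta|k$ is forced for every $k$, hence $\theta=\psi$. The bookkeeping here — relating the abstract deflation to the symbolic map $\alpha_s$ and to $\xi$, and tracking the vertex labels $\overrightarrow v,\overleftarrow v$ correctly through the graph — is where I expect the real work to be, and I would lean on Lemma \ref{lem:canonical} to keep the isometries explicit.

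Next, $(III)\Rightarrow(II)$ is essentially formal: if $\Pi$ is injective then $\Pi(S\theta)$ is a well-defined function of $\Pi(\theta)$ (compose $\Pi^{-1}$ with $S$ with $\Pi$), and similarly each admissible prepending map $\Pi(\theta)\mapsto\Pi(n\theta)$ is well-defined; one only checks these are genuinely the inflation/deflation geometric operations, which is immediate from the nesting and from Theorem \ref{piusingts}. For $(II)\Rightarrow(I)$ I argue contrapositively: if local rigidity fails, either (i) fails — so there are proper neighbor maps with coinciding images, giving two different coarsenings of the same patch and hence an ill-defined deflation, so $\Pi$ cannot be inverted consistently and the inflation map on the image is multivalued — or (ii) fails — an extra symmetry of some $A^v$ produces two distinct addresses $\theta\neq\psi$ with $\Pi(\theta)=\Pi(\psi)$, again breaking single-valuedness. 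Finally for $(IV)$: from Theorem \ref{piusingts}, $E\Pi(\psi)=\Pi(\theta)$ forces $E\,f_{-\psi}s^{\xi(\psi)}T_{\xi(\psi)}^{(\cdot)}=f_{-\theta}s^{\xi(\theta)}T_{\xi(\theta)}^{(\cdot)}$; matching supports and combinatorial depths (the scaling ratio of $E$ is $1$) gives $\xi(\theta)=\xi(\psi)$ up to the overlap region, and then $E=f_{-\theta}f_{\psi}$ restricted appropriately, which is by definition an element of the groupoid $\mathcal{T}'$ of neighbor-type maps; conversely any such $E$ manifestly sends one tiling in $\mathbb{T}$ to another. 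The main obstacle throughout is the first implication $(I)\Rightarrow(III)$, specifically formalizing "the deflation of a tiling is unique" from the neighbor-map hypothesis and propagating it through infinitely many levels while controlling the graph-directed vertex data; everything else is either formal or a direct computation with the $E$-isometries already named in Theorem \ref{piusingts} and Lemma \ref{lem:canonical}.
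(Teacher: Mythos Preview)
Your proposal is reasonable and substantially more detailed than what the paper actually offers, but it follows a different route. The paper's argument (which is itself only a sketch, sitting in a draft subsection) is: first show that local rigidity in the sense of Definition~\ref{lrigid2} implies local rigidity in the earlier sense of Definition~\ref{localdef}; then (II), (III), (IV) follow immediately from the machinery already built for the old definition --- Theorem~\ref{keythm} and the inflation/deflation section give (II), Corollary~\ref{cor02} gives (III), and Theorem~\ref{main} gives the precise form of (IV). The only new work the paper flags is closing the loop via $(III)\Rightarrow(I)$, which it calls ``interesting'' but does not carry out. By contrast you argue the cycle $(I)\Rightarrow(III)\Rightarrow(II)\Rightarrow(I)$ directly from the neighbor-map definition, without passing through Definition~\ref{localdef}. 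Your approach is more self-contained and makes explicit why the neighbor-map condition forces unique deflation; the paper's approach is more economical because it recycles Theorems~\ref{keythm} and~\ref{main} wholesale. You correctly identify the real content as the step $(I)\Rightarrow(III)$ (equivalently, that the neighbor-map hypothesis forces unique coarsening level by level), which is essentially the same content as the paper's reduction of the new definition to the old one; either way that is where the work lies, and your outline of it --- condition~(i) rules out ambiguity between subtiles under distinct amalgamations, condition~(ii) kills residual symmetry --- is the right shape. For (IV) note that the paper's own statement is left deliberately vague (``etc\ldots''), and its intended content is exactly Theorem~\ref{main}, so your computation with the isometries $E_\theta=f_{-\theta}s^{\xi(\theta)}$ is aiming at the right target.
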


\begin{proof}
The proof that if a system is locally rigid as here defined, then it is
locally rigid as defined earlier is straightfoward and interesting. So (I)\
implies (II), (III) and (IV) which also imply one-another in various ways
already established. What is interesting is that (III) implies (I).
\end{proof}

\subsection{Flexible}

Restrict attention to the case of a TIFS in case $\left\vert \mathcal{V}%
\right\vert =1$.

\begin{definition}
The TIFS $\mathcal{F}$ is \textbf{inflexible }if for all $i$ and $j$ with $%
i\neq j$ with $a_{i}=a_{j}$ such that there is some $k$ and $l$ with $k\neq
l $ and $a_{k}=a_{l}$ such that 
\begin{equation*}
f_{j}^{-1}f_{k}(A)\cap f_{i}^{-1}f_{l}(A)
\end{equation*}%
is a \textbf{bad} intersection. (If all such intersections are good, i.e.
integral tiles or sets of lower dimension than that of the attractor, then
the TIFS\ is \textbf{flexible}.
\end{definition}

\end{newrigid}

\subsection{Main Theorem}

\begin{newrigid}

** This is continuation from before the previous section.

\end{newrigid}

We define $\mathbb{Q}:=\{ET:E\in\mathcal{T}$, $T\in\mathbb{T\}}$ and $%
\mathbb{Q}^{\prime}:=\{ET:E\in\mathcal{T}$, $T\in\mathbb{T},\mathbb{\ }T\neq
T_{0}^{v},v\in\mathcal{V}\mathbb{\}}$.

\begin{definition}
Let $\mathcal{F}$ be a locally rigid IFS. Any tile in $\mathbb{Q}$ that is
isometric to $s^{a_{\max}}A^{v}$ is called a \textbf{small tile}, and any
tile that is isometric to $sA^{v}$ is called a \textbf{large tile}. We say
that a tiling $P\in\mathbb{Q}$ comprises a set of \textbf{partners }if $%
P=ET_{0}^{v}$ for some $E\in\mathcal{T}$, $v\in\mathcal{V}$. Given $Q\in%
\mathbb{Q}$ we define $partners(Q)$ to be the set of all sets of partners in 
$Q$.
\end{definition}

The terminology of large and small tiles is useful in discussing some
examples. If a tiling $T\in\mathbb{Q}$ is locally rigid, then each set of
partners in $T$ has no partners in common with any other set of partners in $%
T$.

Define for convenience: 
\begin{align*}
\Lambda_{k}^{v} & =\{\sigma\in\Sigma_{\ast}:\xi(\sigma)=k,\overleftarrow{v}%
(\sigma_{1})=v\}\subset\Omega_{k-1}^{v} \\
\Lambda_{k} & =\cup_{v}\Lambda_{k}^{v}\subset\Omega_{k-1}
\end{align*}

\begin{theorem}
\label{keythm}Let $\mathcal{F}$ be locally rigid and let $T_{k}$ be given.

(i) There is a bijective correspondence between $\Lambda_{k}^{v}$ and the
set of copies $ET_{0}^{v}\subset T_{k}$ with $E\in\mathcal{T}$.

(ii) If $ET_{0}^{v}\subset T_{k}$ for some $E\in\mathcal{T}$, then there is
unique $\sigma\in\Lambda_{k}^{v}$ such that 
\begin{equation*}
E=E_{\sigma_{\left\vert \sigma\right\vert }\sigma_{\left\vert \sigma
\right\vert -1}...\sigma_{1}}^{-1}=(f_{-\sigma_{\left\vert \sigma\right\vert
}\sigma_{\left\vert \sigma\right\vert
-1}...\sigma_{1}}s^{k})^{-1}=s^{-k}f_{\sigma}
\end{equation*}
\end{theorem}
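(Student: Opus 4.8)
The plan is to prove both parts together by exhibiting an explicit bijection and checking it is well-defined, injective, and surjective, using the structural decomposition of $T_k$ already established in Lemma \ref{lem:canonical} together with local rigidity.

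First I would recall the setup: $T_k = s^{-k}\pi(\Omega_k)$, and by Lemma \ref{lem:canonical} (taking $l=0$, say, or iterating) $T_k$ decomposes as a disjoint union of isometric copies of canonical tilings $T_j^{w}$ for various $j<k$ and $w$; in particular it contains copies $ET_0^{v}$ precisely when some word $\omega$ in the relevant index set has the property that $\xi(\omega)=k$, since $T_0^{v}$ corresponds (via $s^{-k}\pi$) to a single tile, namely a copy of $A^v$ appearing as $f_\omega(A^{\overrightarrow v(\omega)})$ with $\overrightarrow v(\omega)=v$ — wait, one must be careful about which vertex labels the tile; a copy of $T_0^v$ is a set of partners, i.e. $s^{-k}\pi$ of all $\omega j$ with $\omega$ fixed and $j$ ranging over edges out of $\overrightarrow v(\omega)$. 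So the set of copies $ET_0^v \subset T_k$ is naturally indexed by words $\sigma\in\Sigma_\ast$ with $\xi(\sigma)=k$ and $\overleftarrow v(\sigma_1)=v$ when we read $\sigma$ as the common prefix — that is, by $\Lambda_k^v$. I would define the map $\Lambda_k^v \to \{ET_0^v \subset T_k\}$ by $\sigma \mapsto s^{-k}\pi(\sigma\Omega_0^{\overrightarrow v(\sigma)})$, equivalently the copy $ET_0^v$ with $E = s^{-k}f_\sigma$, and check via Corollary \ref{cor:partition} (with $m=k$, the block corresponding to the prefix $\sigma$) that this is exactly a set of partners, giving a copy of $T_0^{\overrightarrow v(\sigma)}$; one then notes $\overrightarrow v(\sigma) = v$ is forced — actually it is $\overleftarrow v(\sigma_1)$ that equals the $v$ of $\Lambda_k^v$, so I should track the vertex bookkeeping carefully and state the correspondence with the vertex that actually appears. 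This bookkeeping (which endpoint of $\sigma$ carries which label) is the first place to be careful.

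Next, for injectivity: if $\sigma, \sigma' \in \Lambda_k^v$ give the same copy $ET_0^v$, then $s^{-k}f_\sigma(A^{\cdot}) = s^{-k}f_{\sigma'}(A^{\cdot})$ as tilings, hence $f_\sigma$ and $f_{\sigma'}$ carry the relevant attractor component onto the same set; since the pieces of $T_k$ are non-overlapping (Equation (\ref{equation0})) and $s^{-k}\pi$ is a bijection from $\Omega_k$ to $T_k$ (by the Lemma just before Example \ref{ex01}), distinct prefixes give distinct partner-sets, so $\sigma = \sigma'$. For surjectivity: given $ET_0^v \subset T_k$ with $E\in\mathcal T$, its tiles are a subset of the tiles of $T_k$, each of which has a unique relative address in $\Omega_k$; the set of those addresses, by Corollary \ref{cor:partition}, must be of the form $\omega\Omega_j^{w}$ for a unique prefix $\omega\in\Omega_j$ with $j = k - \xi(\omega)$... and since the copy is of $T_0$ we need $j=0$, i.e. $\xi(\omega)=k$, so $\omega\in\Lambda_k^{\cdot}$ — here is where local rigidity (Definition \ref{localdef}, specifically the uniqueness of how a copy of $A^w$ is tiled, i.e. condition (i) and the single-symmetry condition (ii)) is essential: it guarantees that the partner-structure of $ET_0^v$ forces it to coincide with one of the blocks of the canonical partition rather than sitting "diagonally" across several, and it pins down $E$ and $v$ uniquely rather than up to a nontrivial symmetry of $A^v$. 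This is the main obstacle: without local rigidity a copy $ET_0^v$ could in principle appear in $T_k$ without being one of the canonical sub-blocks, or with an ambiguous $(E,v)$, so the argument must invoke exactly clauses (i) and (ii) of the definition to rule that out.

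Finally, part (ii) is essentially a restatement once the bijection of part (i) is in hand: the copy corresponding to $\sigma\in\Lambda_k^v$ is $ET_0^{v}$ with $E = s^{-k}f_\sigma$ by construction; I would then just unwind the notation $E_{\sigma_{|\sigma|}\cdots\sigma_1} = f_{-\sigma_{|\sigma|}\cdots\sigma_1}s^{k}$ (the isometry notation from Lemma \ref{lem:canonical} / Theorem \ref{piusingts}) and verify $f_{-\sigma_{|\sigma|}\cdots\sigma_1} = (f_\sigma)^{-1}$ from the definition of $f_{-\theta}$ in the notation subsection, so that $E_{\sigma_{|\sigma|}\cdots\sigma_1}^{-1} = (f_{-\sigma_{|\sigma|}\cdots\sigma_1}s^k)^{-1} = s^{-k}f_\sigma = E$, and uniqueness of $\sigma$ is exactly the injectivity from part (i). The only genuine content beyond Lemma \ref{lem:canonical} and Corollary \ref{cor:partition} is the appeal to local rigidity for surjectivity/uniqueness, so I would foreground that and treat the index-juggling as routine.
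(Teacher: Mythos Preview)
Your approach is correct but proceeds differently from the paper's. The paper argues \emph{inductively via the inflation operator}: starting from $ET_{0}$ with its addresses in $\Omega_{0}=[N]$, it builds the relative-address structure of $ET_{k+1}$ from that of $ET_{k}$ by applying $\alpha^{-1}$ (scale by $s^{-1}$, then split each large tile $s^{-1}FA^{v}$ into its partner-set $\{Ff_{i}(A^{\overleftarrow{v}(i)}):i\in\mathcal{E}_{v,\ast}\}$), with local rigidity invoked \emph{at each splitting step} to guarantee the decomposition into partners is unambiguous. Part (ii) then falls out because the inductive process forces the addresses of any $E'T_{0}\subset T_{k}$ to share a common prefix $\sigma$ with $\xi(\sigma)=k$.

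You instead give a \emph{static} argument through the symbolic partition structure (Corollaries~\ref{cor:partition} and~\ref{lem:struct}): exhibit the map $\sigma\mapsto s^{-k}f_{\sigma}$ directly, get injectivity from the bijection $\Omega_{k}\to T_{k}$, and invoke local rigidity once at the end to force any $ET_{0}^{v}\subset T_{k}$ to coincide with a canonical block rather than sit ``diagonally''. This is a legitimate alternative, and your identification of surjectivity as the place where Definition~\ref{localdef}(i)--(ii) does the work is exactly right. What the paper's inductive route buys is a concrete \emph{mechanism} for that step: rather than asserting that a rogue copy cannot exist, one sees that the partner-sets are detected level by level, so every copy is eventually accounted for. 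Your direct route is shorter but leaves that mechanism implicit; to complete it you would want to argue that if $ET_{0}^{v}$ shares a tile with a canonical block $s^{-k}f_{\sigma}T_{0}^{w}$ then condition~(i) of local rigidity (applied after conjugating by $s^{-k}f_{\sigma}$) forces $E=s^{-k}f_{\sigma}$ and $v=w$. Your vertex bookkeeping is indeed the one place to be careful, as you note: the superscript $v$ in $\Lambda_{k}^{v}$ refers to $\overleftarrow{v}(\sigma_{1})$, and the copy produced is $s^{-k}f_{\sigma}T_{0}^{\overrightarrow{v}(\sigma_{|\sigma|})}$, so the ``$v$'' in the theorem statement needs to be matched accordingly.
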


\begin{proof}
(i) If $(\mathcal{F},\mathcal{G})$ is locally rigid, then given the tiling $%
ET_{k}$ with $E\in\mathcal{T}$ we can identify $E$ uniquely. The relative
addresses of tiles in $ET_{k}$ may then be calculated in tandem by repeated
application of $\alpha^{-1}$. Each tile in $ET_{0}$ is associated with a
unique relative addresses in $\Omega_{0}=[N]$. Now assume that, for all $%
l=0,1,...,k,$ we have identified the tiles of $ET_{l}$ with their relative
addresses (relative to $T_{l}$). These lie in $\Omega_{l}$. Then the
relative addresses of the tiles of $ET_{k+1}$ (relative to $T_{k+1})$ may be
calculated from those of $ET_{k}$ by constructing the set of sets $%
s^{-1}ET_{k}$, and then splitting the images of large tiles$,$ namely those
that are of the form $s^{-1}FA^{v}$ for some $v\in\mathcal{V}$ and $F\in%
\mathcal{T}$ , to form nonintersecting sets of partners of the form $%
\{Ff_{i}(A^{\overleftarrow{v}(i)}):i\in\mathcal{E}_{v,\ast}\}$, assigning to
these "children of the split" the relative addresses of their parents
(relative to $T_{k}$) together with an additional symbol $i\in\left[ N\right]
$ added on the right-hand end according to its relative address relative to
the copy of $T_{0}$ to which it belongs. By local rigidity, this can be done
uniquely. The relative addresses (relative to $T_{k+1}$) of the tiles in $%
s^{-1}ET_{k}$ that are not split and so are simply $s^{-1}$ times as large
as their predecessors, are the same as the relative addresses of their
predecessors relative to $T_{k}.$

(ii) It follows in particular that if $\mathcal{F}$ is locally rigid and $%
E^{\prime}T_{0}\subset T_{k}$, then the relative addresses of the tiles of $%
E^{\prime}T_{0}$ must be $\{\varnothing.\sigma_{1}...\sigma_{\left\vert
\sigma\right\vert }i:i\in\lbrack N]\}$ for some $\sigma_{1}...\sigma
_{\left\vert \sigma\right\vert }\in\Sigma_{\ast}$ with $\xi(\sigma
_{1}...\sigma_{\left\vert \sigma\right\vert })=k$. In this case we say that
the relative address of $E^{\prime}T_{0}$ (relative to $T_{k}$) is $%
\varnothing.\sigma_{1}...\sigma_{\left\vert \sigma\right\vert }$.
\end{proof}

\begin{theorem}
\label{main}Let $(\mathcal{F},\mathcal{G)}$ be locally rigid. Then $\Pi
(\theta)=E\Pi(\psi)$ for some $E\in\mathcal{T}$, $\theta,\psi\in\Sigma^{%
\dag} $if and only if there are $p,q\in\mathbb{N}_{0}$ such that $\xi(\theta
|p)=\xi(\psi|q),$ $E=E_{\theta|p}E_{\psi|q}^{-1}$ and $S^{p}\theta=S^{q}\psi$%
.
\end{theorem}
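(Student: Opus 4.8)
The plan is to prove the two directions separately, with the ``if'' direction being essentially a computation and the ``only if'' direction carrying the real content. For the ``if'' direction, suppose $\xi(\theta|p)=\xi(\psi|q)$, $E=E_{\theta|p}E_{\psi|q}^{-1}$, and $S^{p}\theta=S^{q}\psi=:\phi$. The idea is to apply the identity of Theorem \ref{piusingts} recursively: one shows $\Pi(\theta)$ depends on $\theta$ only through $\theta|p$ and the ``tail'' $S^p\theta$, more precisely that $\Pi(\theta)=E_{\theta|p}\,s^{-\xi(\theta|p)}\Pi(S^p\theta)$ in an appropriate sense coming from the nested structure (\ref{eqthmONE}) and the predecessor decomposition in Lemma \ref{lem:canonical}. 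Combining the analogous identity for $\psi$, and using $\xi(\theta|p)=\xi(\psi|q)$ so that the scaling factors $s^{-\xi(\theta|p)}$ match, we get $\Pi(\theta)=E_{\theta|p}E_{\psi|q}^{-1}\Pi(\psi)=E\Pi(\psi)$. This direction is routine bookkeeping with the $E$-isometries, taking care that $E\in\mathcal{T}$ (which holds because each $E_{\theta|p}$, $E_{\psi|q}$ does, by Lemma \ref{lem:canonical} / Theorem \ref{piusingts}).

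For the ``only if'' direction, assume $\Pi(\theta)=E\Pi(\psi)$ with $E\in\mathcal{T}$. The strategy is to locate, inside the common tiling $\Pi(\theta)=E\Pi(\psi)$, a distinguished patch that can be read off in two ways. First I would use Theorem \ref{thm:three}(3) (pure self-referentiality, or the remark reducing to it) so that for $|\theta|,|\psi|$ large the prototile set is $\mathcal{P}$; then the tile $\pi(\varnothing)$-region, i.e. the copy of $T_0=\Pi(\varnothing)$ sitting at the ``center'' of $\Pi(\theta)$, is a set of partners $ET_0^{v}$ for appropriate $v$. By Theorem \ref{keythm} applied to $T_{\xi(\theta)}$ (of which $\Pi(\theta)$ is the isometric image $E_\theta T_{\xi(\theta)}^{\overrightarrow v(\theta_{|\theta|})}$ via Theorem \ref{piusingts}), every copy of $T_0^{v}$ inside corresponds bijectively to an element of $\Lambda_k^{v}$, hence to a symbolic word, and local rigidity makes this correspondence unambiguous. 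The key point is that the isometry $E$ carrying $\Pi(\psi)$ onto $\Pi(\theta)$ must map the partner-set-structure of one to that of the other; matching the word in $\Lambda$ for the central copy of $T_0$ on each side gives exactly the data $p$, $q$ with $\xi(\theta|p)=\xi(\psi|q)$, and forces $E=E_{\theta|p}E_{\psi|q}^{-1}$.

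It then remains to deduce $S^p\theta=S^q\psi$. For this I would argue by the nested exhaustion (\ref{eqthmONE}): having matched $\Pi(\theta|p)$ with $E\,\Pi(\psi|q)$ as the same patch in the common tiling, one strips these off and observes that the ``remainder'' tilings $\Pi(\theta)\setminus\Pi(\theta|p)$ and $\Pi(\psi)\setminus\Pi(\psi|q)$ must agree under the same $E$; by the recursive structure $\Pi(S^p\theta)$ and $\Pi(S^q\psi)$ are (up to the common isometry $E_{\theta|p}s^{\xi(\theta|p)}$, using $\xi(\theta|p)=\xi(\psi|q)$) the same tiling, so $\Pi(S^p\theta)=\Pi(S^q\psi)$, and injectivity of $\Pi$ (which is part of local rigidity, via the equivalence theorem, or directly Definition \ref{localdef}) yields $S^p\theta=S^q\psi$. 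One should double-check that $p,q$ can be taken in $\mathbb{N}_0$ (allowing the degenerate case $E=\mathrm{id}$, $\theta=\psi$) and that the ``large enough'' hypotheses used to invoke Theorems \ref{thm:three}(3) and \ref{keythm} can be arranged by passing to $S^p\theta$, $S^q\psi$ with larger $p,q$ — the conclusion is insensitive to such enlargement since $\xi$ only increases.

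\textbf{Main obstacle.} The hard part will be making the ``read off the central $T_0$ in two ways'' argument fully rigorous: one must show the copy of $T_0$ (or more precisely the right set of partners) that realizes the matching is genuinely canonical and that $E$ respects it, rather than $E$ shuffling several candidate copies of $T_0$ among each other. This is precisely where local rigidity enters essentially — conditions (i) and (ii) of Definition \ref{localdef} are exactly what rule out a nontrivial isometry preserving the partner structure — so the proof will hinge on translating ``$E$ matches partner-sets'' into ``$E$ is forced on $T_0^v\cap E T_0^w$'' and invoking local rigidity to conclude $E$ acts as claimed and $v=w$; combined with Theorem \ref{keythm}(ii) this pins down $E=s^{-k}f_\sigma$-type isometries and thereby the indices $p,q$.
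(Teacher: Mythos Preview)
Your ``if'' direction matches the paper's computation. The overall strategy for the ``only if'' direction is also the paper's: locate the copy $E\Pi(\varnothing)$ of $T_0$ inside $\Pi(\theta|m)$ for suitable $m$, apply Theorem~\ref{keythm}(ii) to read off its relative address $\sigma$ in $T_{\xi(\theta|m)}$, and deduce $E=E_{\theta|m}E_{\psi|q}^{-1}$ with $\xi(\theta|m)=\xi(\psi|q)$.

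However, your final step --- deducing $S^p\theta=S^q\psi$ by stripping off the matched patch and then invoking injectivity of $\Pi$ --- is circular. In this paper injectivity of $\Pi$ is Corollary~\ref{cor02}, which is a \emph{consequence} of Theorem~\ref{main}, not an input to its proof. Definition~\ref{localdef} only constrains isometries $E\in\mathcal{T}$ for which $T_0^v\cap ET_0^w$ tiles $A^v\cap EA^w$, together with a symmetry condition on each $A^v$; it does not directly give that $\Pi$ is one-to-one on $\Sigma^\dag$, and there is no independent ``equivalence theorem'' available to appeal to.

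The paper closes the argument without injectivity by a direct address-tracking computation. Having found the relative address of $E\Pi(\varnothing)$ inside $T_{\xi(\theta|m)}$, it recomputes the relative addresses of \emph{both} $\Pi(\varnothing)$ and $E\Pi(\varnothing)$ inside the deeper canonical tiling $T_{\xi(\theta|(m+k))}$ for arbitrary $k\in\mathbb{N}$, and shows (using Definition~\ref{relativedef}) that passing from level $\xi(\theta|m)$ to level $\xi(\theta|(m+k))$ prefixes each relative address by $\theta_{m+k}\theta_{m+k-1}\cdots\theta_{m+1}$. Since the address of $\Pi(\varnothing)$ at level $m$ is $\theta_m\cdots\theta_1$ and that of $E\Pi(\varnothing)$ is $\psi_{|\sigma|}\cdots\psi_1$, the common prefix at every level $m+k$ forces $S^{|\sigma|}\psi=S^m\theta$. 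You need to replace your appeal to injectivity by an argument of this kind that extracts the tail equality directly from the address structure at increasing depths.
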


\begin{proof}
If there are $p,q\in\mathbb{N}_{0}$ such that $\xi(\theta|p)=\xi(\psi|q),$ $%
E=E_{\theta|p}E_{\psi|q}^{-1}$ and $S^{p}\theta=S^{q}\psi\theta\psi$%
\begin{align*}
\Pi(\theta) & =\bigsqcup\limits_{m\in\mathbb{N}_{0}}f_{-\theta|\left(
p+m\right) }s^{\xi\left( \theta|\left( p+m\right) \right) }T_{\xi\left(
\theta|\left( p+m\right) \right) } \\
& =f_{-\theta|p}\bigsqcup\limits_{m\in\mathbb{N}_{0}}f_{-\psi_{q+1}\psi
_{q+2}...\psi_{q+m}}s^{\xi\left( \psi_{q+1}\psi_{q+2}...\psi_{q+m}\right)
}s^{\xi(\psi|q)}T_{\xi(\psi|q+m)} \\
& =f_{-\theta|p}f_{-\psi|q}^{-1}\bigsqcup\limits_{m\in\mathbb{N}%
_{0}}f_{-\psi|\left( q+m\right) }s^{\xi\left( \psi|\left( q+m\right) \right)
}T_{\xi\left( \psi|\left( q+m\right) \right) } \\
& =E_{\theta|p}E_{\psi|q}^{-1}\Pi(\psi)
\end{align*}
This completes the proof in one direction.

To prove the converse we suppose that $\mathcal{F}$ is locally rigid and
that $\Pi(\theta)=E\Pi(\psi)$ for some $E\in\mathcal{T}$, where $\theta,\psi
\in\Sigma^{\dag}$. Let $m$ be any integer such that $E\Pi(\varnothing
)\subset\Pi(\theta|m).$ It follows that 
\begin{equation*}
E_{\theta|m}^{-1}E\Pi(\varnothing)\subset T_{\xi(\theta|m)}
\end{equation*}
Then by Theorem \ref{keythm} (ii) the set of relative addresses (relative to 
$T_{\xi(\theta|m)}$) of copies of $T_{0}$ in $\Pi(\theta|m)$ is 
\begin{equation*}
\left\{ \sigma i:i\in\lbrack N],\sigma\in\Sigma_{\ast},\xi(\sigma)=\xi
(\theta|m)\right\} .
\end{equation*}
It follows that $E_{\theta|m}^{-1}E\Pi(\varnothing)=E_{\sigma_{|\sigma|}%
\sigma_{|\sigma|-1}...\sigma_{1}}^{-1}\Pi(\varnothing)$ for some unique $%
\sigma=\sigma_{1}\sigma_{2}...\sigma_{|\sigma|-1}\sigma_{|\sigma|}$ such
that $\xi(\sigma)=\xi(\theta|m).$ It follows that 
\begin{equation*}
E=E_{\theta|m}E_{\sigma_{|\sigma|}\sigma_{|\sigma|-1}...\sigma_{1}}^{-1},
\end{equation*}
where we have used local rigidity. We know the absolute addresses of the
tiles of $\Pi(\varnothing)\subset\Pi(\theta|m)$ are 
\begin{equation*}
\left\{ \theta_{1}\theta_{2}...\theta_{m}.\theta_{m}...\theta_{2}\theta
_{1}i|_{cancel}\text{ for }i\in\lbrack N]\right\} .
\end{equation*}
Given $E=E_{\theta|m}E_{\sigma_{|\sigma|}\sigma_{|\sigma|-1}...%
\sigma_{1}}^{-1},$ the absolute addresses of $E\Pi(\varnothing)\subset\Pi(%
\theta|m)$ are then $\left\{ \theta_{1}\theta_{2}...\theta_{m}.\sigma
i|_{cancel}\text{ for }i\in\lbrack N]\right\} .$ Since $\Pi(\theta)=E\Pi(%
\psi),$ 
\begin{equation*}
\psi_{1}\psi_{2}...\psi_{\left\vert \sigma\right\vert }=\sigma_{\left\vert
\sigma\right\vert }\sigma_{\left\vert \sigma\right\vert -1}...\sigma_{1}
\end{equation*}
and thus 
\begin{equation*}
E=E_{\theta|m}E_{\psi_{1}\psi_{2}...\psi_{\left\vert \sigma\right\vert
}}^{-1}
\end{equation*}
where $\xi\left( \psi_{1}\psi_{2}...\psi_{\left\vert \sigma\right\vert
}\right) =\xi\left( \sigma\right) =\xi(\theta|m)$.

Now let $k\in\mathbb{N}$ and consider the two sets $\Pi(\varnothing)$ and $%
E\Pi(\varnothing)$ both of which belong to $\Pi(\theta|m)=E_{\theta|m}T_{%
\xi(\theta|m)}$ which in turn is contained in $\Pi(\theta|m+k)=E_{\theta
|\left( m+k\right) }T_{\xi(\theta|\left( m+k\right) )}$. We are going to 
\textit{calculate the relative addresses of both} $\Pi(\varnothing)$ \textit{%
and} $E\Pi(\varnothing)$ \textit{relative to} $T_{\xi(\theta|\left(
m+k\right) )}$ \textit{in terms of their relative addresses relative to} $%
T_{\xi(\theta|m)}$. Using Definition \ref{relativedef} we find: the relative
address of $t\in\Pi(\theta|m)\subset\Pi(\theta|m+k)$ relative to $%
T_{\xi(\theta|m)}$ is $\omega=\pi^{-1}(s^{\xi(\theta|m)}E_{\theta|m}^{-1}t)$
and relative to $T_{\xi(\theta|\left( m+k\right) )}$ it is $\widetilde{\omega%
}=\pi^{-1}(s^{\xi(\theta|\left( m+k\right) )}E_{\theta|\left( m+k\right)
}^{-1}t)$. It follows that $\widetilde{\omega }=\theta_{m+k}%
\theta_{m+k-1}...\theta_{m+1}\omega.$ Hence the relative addresses of $%
\Pi(\varnothing)$ and $E\Pi(\varnothing)$ relative to $T_{\xi(\theta|\left(
m+k\right) )}$ are $\varnothing.\theta_{m+k}\theta_{m+k-1}...\theta_{m+1}%
\theta_{m}...\theta_{1}$ and $\varnothing
.\theta_{m+k}\theta_{m+k-1}...\theta_{m+1}\psi_{\left\vert \sigma\right\vert
}\psi_{\left\vert \sigma\right\vert -1}...\psi_{1}$. It follows that $%
S^{\left\vert \sigma\right\vert }\psi=S^{m}\theta$.
\end{proof}

\begin{corollary}
\label{cor01}If $\mathcal{(F},\mathcal{V)}$ is locally rigid, then $\Pi
(\theta)=E\Pi(\theta)$ if and only if $E=id$.
\end{corollary}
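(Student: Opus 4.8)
The plan is to obtain this immediately from Theorem \ref{main} by specializing $\psi=\theta$. The ``if'' direction is trivial: when $E=id$ we have $\Pi(\theta)=\Pi(\theta)$, and if one wants to see it through the lens of Theorem \ref{main}, the choice $p=q=0$ witnesses the stated condition since $\xi(\theta|0)=\xi(\theta|0)$, $E_{\theta|0}E_{\theta|0}^{-1}=id$, and $S^{0}\theta=S^{0}\theta$.

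For the converse, suppose $(\mathcal{F},\mathcal{G})$ is locally rigid and $\Pi(\theta)=E\Pi(\theta)$ for some $E\in\mathcal{T}$. Apply Theorem \ref{main} with $\psi=\theta$: there exist $p,q\in\mathbb{N}_{0}$ with $\xi(\theta|p)=\xi(\theta|q)$, with $E=E_{\theta|p}E_{\theta|q}^{-1}$, and with $S^{p}\theta=S^{q}\theta$. The key point is that the map $k\mapsto\xi(\theta|k)$ is \emph{strictly increasing}: by the definition of a TIFS each $a_{n}\in\mathbb{N}$, so $a_{\theta_{k+1}}\geq1$ and hence $\xi(\theta|(k+1))=\xi(\theta|k)+a_{\theta_{k+1}}>\xi(\theta|k)$. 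In particular $\xi$ is injective along the prefixes of the fixed path $\theta$, so $\xi(\theta|p)=\xi(\theta|q)$ forces $p=q$. Then $E=E_{\theta|p}E_{\theta|p}^{-1}=id$, as required.

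There is essentially no obstacle here; all the substance is already carried by Theorem \ref{main}. The only thing to be careful about is the strict monotonicity of $k\mapsto\xi(\theta|k)$, which is exactly what excludes the degenerate scenario of distinct cut points $p\neq q$ producing the same $\xi$-value (and thus a nontrivial $E$). Since that monotonicity is an immediate consequence of $a_{n}\in\mathbb{N}$ in Definition \ref{defONE}, the proof is short.
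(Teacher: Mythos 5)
Your proof is correct and matches the paper's intent: the corollary is stated as an immediate consequence of Theorem \ref{main} (no separate proof is given there), obtained exactly as you do by taking $\psi=\theta$ and noting that strict monotonicity of $k\mapsto\xi(\theta|k)$ (since each $a_{n}\geq 1$) forces $p=q$, hence $E=E_{\theta|p}E_{\theta|p}^{-1}=id$.
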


\begin{corollary}
\label{cor02}If $\mathcal{(F},\mathcal{V)}$ is locally rigid, then $\Pi
:\Sigma^{\dag}\rightarrow\mathbb{T}$ is a homeomorphism.
\end{corollary}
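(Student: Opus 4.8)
The plan is to check that $\Pi\colon\Sigma^{\dag}\to\mathbb{T}$ is a continuous bijection onto $\mathbb{T}$, where $\Sigma^{\dag}$ is compact and $\mathbb{T}$ is Hausdorff; a continuous bijection from a compact space onto a Hausdorff space is automatically a homeomorphism (closed subsets of $\Sigma^{\dag}$ are compact, their images are compact hence closed, so $\Pi$ is a closed map and $\Pi^{-1}$ is continuous), and that is the assertion. Continuity of $\Pi$ into the compact metric space $(\mathbb{T}',d_{\mathbb{T}'})$, and therefore onto the compact (hence closed, hence Hausdorff) subspace $\mathbb{T}=\Pi(\Sigma^{\dag})$, is exactly Theorem~\ref{thm:three}(4); surjectivity onto $\mathbb{T}$ holds by the definition of $\mathbb{T}$. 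So the only real content is injectivity of $\Pi$.

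For injectivity I would apply Theorem~\ref{main} with $E=\mathrm{id}$: if $\Pi(\theta)=\Pi(\psi)$ with $\theta,\psi\in\Sigma^{\dag}$, then there are $p,q\in\mathbb{N}_{0}$ with $\xi(\theta|p)=\xi(\psi|q)$, $E_{\theta|p}=E_{\psi|q}$, and $S^{p}\theta=S^{q}\psi$. The last relation says $\theta$ and $\psi$ share a common tail $\tau$, so $\theta=(\theta|p)\tau$ and $\psi=(\psi|q)\tau$, and it remains to show $\theta|p=\psi|q$ (in particular $p=q$). Since $E_{\theta|p}=f_{-\theta|p}\,s^{\xi(\theta|p)}$ and the scaling exponents coincide, $E_{\theta|p}=E_{\psi|q}$ reduces to an identity $f_{\theta_{p}\cdots\theta_{1}}=f_{\psi_{q}\cdots\psi_{1}}$ of composed similitudes indexed by two reversed $\mathcal{G}$-paths, and compatibility with the common tail $\tau$ forces these two reversed paths to begin at the same vertex. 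Cancelling any common leading edge, and when the leading edges differ invoking the dimension hypotheses \eqref{equation0}--\eqref{equation01} (two distinct edges out of a vertex carry the attractor components to sets meeting in Hausdorff dimension $<D_{H}(A)$, so the two composites cannot send a full-dimensional component onto one and the same tile) together with clause (ii) of local rigidity to control the terminal vertex along the paths, one concludes the two reversed paths agree, hence $p=q$, $\theta|p=\psi|q$, and $\theta=\psi$. The degenerate cases are immediate: $\Sigma_{\infty}^{\dag}$ and $\Sigma_{\ast}^{\dag}$ are separated at once because $\Pi$ gives unbounded versus bounded supports (Theorem~\ref{thm:three}(1)), and $p=0$ or $q=0$ is trivial.

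The step I expect to be the main obstacle is this injectivity bookkeeping: deducing from the single isometry identity $E_{\theta|p}=E_{\psi|q}$, which relates data attached to reversed paths of a priori different lengths, that the paths themselves coincide — in particular ensuring the terminal vertices along those paths match, so that the (near-)injectivity of the coding map $\pi$ on $\Sigma_{\ast}$ guaranteed by the OSC-type hypotheses can be applied. Everything after injectivity — closedness of $\Pi$, continuity of $\Pi^{-1}$, and Corollary~\ref{cor01} as the special case $\theta=\psi$ — is then formal.
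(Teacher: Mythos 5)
Your overall architecture is the one the paper intends (it states the corollary without a written proof): continuity of $\Pi$ is Theorem \ref{thm:three}(4), $\Sigma^{\dag}$ is compact, $\mathbb{T}\subset\mathbb{T}^{\prime}$ is metric hence Hausdorff, surjectivity is definitional, so the whole content is injectivity, to be extracted from Theorem \ref{main} with $E=id$. Up to that reduction your proposal is fine, and you correctly arrive at the statement that must be proved: if $\xi(\theta|p)=\xi(\psi|q)=k$, $E_{\theta|p}=E_{\psi|q}$ and $S^{p}\theta=S^{q}\psi$, then the reversed prefixes $\sigma=\theta_{p}\cdots\theta_{1}$ and $\tau=\psi_{q}\cdots\psi_{1}$, which satisfy $f_{\sigma}=f_{\tau}$ and are anchored at a common vertex by the shared tail, must coincide as words.

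The mechanism you propose for that last step has a genuine gap. Equality of the composite \emph{maps} only equates the associated \emph{tiles} when the two paths terminate at the same vertex, so that both maps are applied to one and the same component $A^{u}$; if the terminal vertices differ, the natural tiles $f_{\sigma}(A^{u})$ and $f_{\tau}(A^{u^{\prime}})=f_{\sigma}(A^{u^{\prime}})$ are disjoint by (\ref{equation01}), so no contradiction with the dimension condition (\ref{equation0}) arises from your cancellation scheme, and clause (ii) of local rigidity (uniqueness of the symmetry of each $A^{v}$ in $\mathcal{T}^{\prime}$) says nothing about which component sits at the far end of a path. Note also that in the case where the terminal vertices do agree, your argument is nothing more than injectivity of $\pi$ on $\Omega_{k-1}$ (the relative-address lemma) and uses no local rigidity at all, which signals that the hypothesis is not being invoked where it is actually needed. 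The step is closed by the paper's own machinery: since $\xi(\sigma)=\xi(\tau)=k$ and the common tail forces $\theta_{p}$ and $\psi_{q}$ to be continuable by the same edge (in the empty-tail case compare supports via Theorem \ref{piusingts} and (\ref{equation01})), both $\sigma$ and $\tau$ lie in $\Lambda_{k}^{v}$ for the \emph{same} $v$ and determine the same isometry $s^{-k}f_{\sigma}=s^{-k}f_{\tau}$; the uniqueness assertion of Theorem \ref{keythm}(ii) --- which is precisely where local rigidity enters --- then gives $\sigma=\tau$, hence $p=q$, $\theta|p=\psi|q$, and with the common tail $\theta=\psi$. With that substitution for your cancellation argument, the rest of your proof (closed map, continuous inverse) is formal and correct.
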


\section{Inflation and deflation\label{inflation}}

If $(\mathcal{F},\mathcal{G)}$ is locally rigid, then the operations of
inflation or "expansion and splitting" of tilings in $\mathbb{Q}$, and
deflation or "amalgamation and shrinking" of tilings in $\mathbb{Q}^{\prime}$
are well-defined. We handle these concepts with the operators $\alpha^{-1}$
and its inverse $\alpha,$ respectively, also used in \cite{barnsleyvince}.

\begin{theorem}
Let $\mathcal{F}$ be a locally rigid IFS. The amalgamation and shrinking
(deflation) operation $\alpha:\mathbb{Q}^{\prime}\mathbb{\rightarrow Q}$ is
well-defined by 
\begin{equation*}
\alpha Q^{\prime}=\{st:t\in Q\backslash partners(Q^{\prime})\}\cup
\bigsqcup\{sEA^{v}:E\in\mathcal{T},ET_{0}^{v}\subset
partners(Q^{\prime}),v\in\mathcal{V}\}
\end{equation*}
for all $Q^{\prime}\in\mathbb{Q}^{\prime}$. The expansion and splitting
(inflation) operator $\alpha:\mathbb{Q\rightarrow Q}$ is well-defined by%
\begin{equation*}
\alpha Q=\{s^{-1}t:t\text{ is not congruent to }sA\}\cup\bigsqcup
\{sET_{0}:E\in\mathcal{T},sEA\in Q\}
\end{equation*}
fro all $Q\in\mathbb{Q}$. In particular, $\alpha T_{k}=T_{k-1}$ and $%
\alpha^{-1}T_{k-1}=T_{k}$ for all $k\in\mathbb{N}$.
\end{theorem}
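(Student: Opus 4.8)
The plan is to verify directly that the two formulas for $\alpha$ define maps with the stated ranges, and that they are mutually inverse, with local rigidity entering only to guarantee that the relevant decompositions are \emph{unique}, hence that $\alpha$ is well-defined as a function (not merely a relation). First I would fix $Q \in \mathbb{Q}$, so $Q = ET$ with $E \in \mathcal{T}$ and $T = \Pi(\theta)$ for some $\theta \in \Sigma^{\dag}$; by Theorem~\ref{piusingts}, $Q$ is an isometric copy of some $T_{k}^{(v)}$ (or of a $\Pi(\theta)$ with $|\theta| = \infty$, handled as a nested union of such). The tiles of $Q$ that are congruent to a $sA^{v}$ are precisely the \textbf{large tiles}, and by Theorem~\ref{keythm}(i) the large tiles of a copy of $T_{k}$ correspond bijectively to the sets of partners $ET_{0}^{v} \subset Q$; each large tile is exactly the image $sFA^{v}$ that appears when one expands its set of partners $FT_{0}^{v}$. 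Thus replacing each such large tile by the patch $sFT_{0}^{v}$ (the "split") and scaling every remaining tile by $s^{-1}$ produces exactly the formula written for $\alpha Q$; I would check this patch is again in $\mathbb{Q}$ by identifying it, via Theorem~\ref{keythm}(ii) and Lemma~\ref{lem:canonical}, with an isometric copy of the appropriate $T_{k+1}^{(v)}$.

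Next I would run the reverse computation for the deflation operator. Fix $Q' \in \mathbb{Q}'$, so $Q'$ is a copy of some $T_{k}^{(v)}$ with $k \geq 1$ (the exclusion of the $T_{0}^{v}$ is exactly what guarantees $Q'$ actually contains sets of partners to amalgamate). By Corollary~\ref{cor:partition} / Lemma~\ref{lem:canonical} with $l = 0$, $Q'$ decomposes as a disjoint union of copies of $T_{0}^{w}$'s and of single tiles whose relative addresses have already "terminated"; amalgamating each copy of partners $ET_{0}^{w}$ into the single tile $EA^{w}$ and then scaling everything by $s$ yields precisely the displayed formula for $\alpha Q'$, and identifies it with a copy of $T_{k-1}^{(v)}$. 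That $\alpha \circ \alpha^{-1} = \mathrm{id}$ and $\alpha^{-1} \circ \alpha = \mathrm{id}$ on the respective spaces is then a matter of observing that splitting a set of partners and re-amalgamating it returns the original large tile, and that the $s^{\pm 1}$ scalings cancel; the special cases $\alpha T_{k} = T_{k-1}$ and $\alpha^{-1} T_{k-1} = T_{k}$ drop out of Lemma~\ref{lem:canonical} with $l=0$ and $E = \mathrm{id}$.

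The one genuinely non-routine point, and the place where local rigidity is indispensable, is the \emph{well-definedness} of $\alpha$ on $\mathbb{Q}$: a priori a tiling $Q \in \mathbb{Q}$ might be writable as $ET$ in more than one way, or a large tile $sFA^{v}$ might sit inside $Q$ without the rest of $Q$ "knowing" which set of partners to assemble beneath it — there could be two distinct patches-of-partners tiling the same support. Condition~(i) of Definition~\ref{localdef} rules out exactly the first ambiguity (if $T_{0}^{v} \cap E T_{0}^{w}$ tiles $A^{v} \cap EA^{w}$ then $E = \mathrm{id}$, $v = w$), and condition~(ii), that each $A^{v}$ has only the trivial symmetry in $\mathcal{T}'$, rules out the second (the assignment of relative addresses within a recovered copy of $T_{0}^{v}$ is forced). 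So I would isolate a short lemma: \emph{under local rigidity, for any large tile $t = sFA^{v}$ occurring in some $Q \in \mathbb{Q}$, there is a unique set of partners $P \subset \mathbb{Q}$ with $\bigcup P = t$ compatible with a canonical tiling}, and then the formulas for $\alpha$ are unambiguous. Everything else is the "Direct calculation" that Lemma~\ref{lem:canonical} already advertises — tracking $s^{-k} f_{\sigma} s^{k - \xi(\sigma)}$ bookkeeping through the partition identities of Section~\ref{symbolicsec} — so I would not belabor it beyond citing Theorem~\ref{keythm} and Lemma~\ref{lem:canonical}.
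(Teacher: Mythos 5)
Your proposal is correct and follows essentially the same route as the paper, whose own proof simply points back to the splitting/amalgamation mechanism in the proof of Theorem \ref{keythm}(i) (and to Lemmas 6 and 7 of the earlier Barnsley--Vince paper with ``rigid'' replaced by ``locally rigid''): you spell out the same argument, using the canonical decompositions of Lemma \ref{lem:canonical} and the partner-set correspondence of Theorem \ref{keythm}, with local rigidity invoked exactly where the paper invokes it, namely to make the identification of partner sets under a large tile unique. No substantive difference or gap.
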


\begin{proof}
We explained essentially this in the proof of Theorem \ref{keythm}(i). See
also \cite[Lemmas 6 and 7]{barnsleyvince} with "rigid" replaced by "locally
rigid".
\end{proof}

\begin{theorem}
\label{keythm2}Let $\mathcal{F}$ be locally rigid and let $T_{k}$ be given.

(i) The following \textbf{hierarchy of} $\sigma\in\Sigma_{\ast}$ obtains: 
\begin{equation}
ET_{0}^{v}\subset F_{1}T_{\xi(S^{|\sigma|-1}\sigma)}\subset
F_{2}T_{\xi(S^{|\sigma|-2}\sigma)}\subset...\subset F_{\left\vert
\sigma\right\vert -1}T_{\xi(S\sigma)}\subset T_{k=\xi(\sigma)}
\label{heirarchy}
\end{equation}
where $F_{j}=s^{-\xi(S^{|\sigma|-j}\sigma)}E_{\sigma_{\left\vert
\sigma\right\vert -j}...\sigma_{1}}^{-1}s^{\xi(S^{|\sigma|-j}\sigma)}$ and $%
E_{\theta}$ is the isometry $f_{-\theta}s^{\xi(\theta|\left\vert
\theta\right\vert )}.$ Application of $\alpha^{\xi(\sigma_{\left\vert
\sigma\right\vert })}$ to the hierarchy of $\sigma_{1}...\sigma_{\left\vert
\sigma\right\vert }$ minus the leftmost inclusion yields the heirarchy of $%
\sigma_{1}...\sigma_{\left\vert \sigma\right\vert -1}.$

(ii) For all $\theta\in\Sigma_{\infty}^{\dag}$, $n\in\left[ N\right] ,$ $k\in%
\mathbb{N}_{0},$%
\begin{equation*}
\alpha^{\xi(\theta|k)}E_{\theta|k}^{-1}\Pi(\theta)=\Pi(S^{k}\theta)\text{
and }\alpha^{-a_{n}}\Pi(\theta)=s^{-a_{n}}f_{n}\Pi(n\theta)
\end{equation*}
where $E_{\theta|k}=f_{-\theta|k}s^{\xi(\theta|k)}$.
\end{theorem}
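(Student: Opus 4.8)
\textbf{Proof proposal for Theorem \ref{keythm2}.}

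The plan is to build part (i) directly from the canonical subdivision structure in Lemma \ref{lem:canonical} and Theorem \ref{piusingts}, and then to derive part (ii) as the natural iteration of (i) together with Theorem \ref{main} and the already-established good behaviour of $\alpha,\alpha^{-1}$ on the canonical tilings. First I would fix $\sigma=\sigma_1\sigma_2\cdots\sigma_{|\sigma|}\in\Sigma_\ast$ with $\xi(\sigma)=k$ and, working from the outside in, apply Lemma \ref{lem:canonical} with $l=1$ to write $T_k=T_{\xi(\sigma)}$ as a disjoint union over $\omega\in\Omega_1$ of isometric copies $E_{k,\omega}T_{k-\xi(\omega)}^{\overrightarrow v(\omega)}$; the summand indexed by the prefix edge $\sigma_1$ is precisely the copy $F_{|\sigma|-1}T_{\xi(S\sigma)}$ claimed, once one checks that $E_{k,\sigma_1}$ equals the stated $F_{|\sigma|-1}=s^{-\xi(S^{|\sigma|-1}\sigma)}E_{\sigma_{|\sigma|-1}\cdots\sigma_1}^{-1}s^{\xi(S^{|\sigma|-1}\sigma)}$ after unwinding $E_{k,\omega}=s^{-k}f_\omega s^{k-e(\omega)}$ and the definition $E_\theta=f_{-\theta}s^{\xi(\theta)}$. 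Iterating this one edge at a time along $\sigma$ produces the full chain in \eqref{heirarchy}, the innermost term $ET_0^v$ (with $v=\overrightarrow v(\sigma_{|\sigma|})$) arising from the identification in Theorem \ref{keythm}(ii) of the copies of $T_0$ inside $T_k$ with elements of $\Lambda_k^v$. The last sentence of (i) — that applying $\alpha^{\xi(\sigma_{|\sigma|})}$ to the hierarchy of $\sigma$ with its leftmost inclusion removed yields the hierarchy of $\sigma^-=\sigma_1\cdots\sigma_{|\sigma|-1}$ — then follows because $\alpha T_m=T_{m-1}$, and $k-\xi(S^{|\sigma|-1}\sigma)=\xi(\sigma^-)$, so the deflation simply strips off the last amalgamation level uniformly across the chain; here local rigidity is exactly what makes the amalgamation unambiguous (as in Theorem \ref{keythm}(i)).

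For part (ii), the identity $\alpha^{\xi(\theta|k)}E_{\theta|k}^{-1}\Pi(\theta)=\Pi(S^k\theta)$ I would prove by first reducing to the finite-level statement on $\Sigma_\ast^\dag$ using Theorem \ref{thm:three}(2) (nesting) and continuity of $\Pi$ (Theorem \ref{thm:three}(4)), since both sides are increasing unions of their truncations. For a truncation $\theta|m$ with $m$ large, Theorem \ref{piusingts} gives $\Pi(\theta|m)=E_{\theta|m}T_{\xi(\theta|m)}$, so $E_{\theta|k}^{-1}\Pi(\theta|m)=E_{\theta|k}^{-1}E_{\theta|m}T_{\xi(\theta|m)}$; one checks $E_{\theta|k}^{-1}E_{\theta|m}=E_{\theta_{k+1}\cdots\theta_m}$ (a cocycle identity for the $E$'s that is a routine manipulation of $f_{-\theta}s^{\xi(\theta)}$), so this equals $\Pi(\theta_{k+1}\cdots\theta_m)=\Pi((S^k\theta)|(m-k))$. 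Applying $\alpha^{\xi(\theta|k)}$ peels off exactly $\xi(\theta|k)$ levels of subdivision — this is the iterated version of the last sentence of (i), valid because $\alpha^j T_n=T_{n-j}$ — and $\xi(\theta|m)-\xi(\theta|k)=\xi((S^k\theta)|(m-k))$, so we land on $T_{\xi((S^k\theta)|(m-k))}$, i.e. on $\Pi((S^k\theta)|(m-k))$ after reattaching the isometry; letting $m\to\infty$ gives the claim. The second identity $\alpha^{-a_n}\Pi(\theta)=s^{-a_n}f_n\Pi(n\theta)$ is the $k=1$ case run backwards: $\Pi(n\theta)=E_{n}^{-1}E_{?}\cdots$, more precisely from the above with $\theta$ replaced by $n\theta$ and $k=1$ we get $\alpha^{\xi(n\theta|1)}E_{n\theta|1}^{-1}\Pi(n\theta)=\Pi(\theta)$, i.e. $\alpha^{a_n}(f_{-n}s^{a_n})^{-1}\Pi(n\theta)=\Pi(\theta)$; rearranging, $\Pi(n\theta)=(f_{-n}s^{a_n})\alpha^{-a_n}\Pi(\theta)=f_n^{-1}s^{a_n}\alpha^{-a_n}\Pi(\theta)$ (using $f_{-n}=f_n^{-1}$), which is the stated relation after multiplying through by $s^{-a_n}f_n$.

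The main obstacle I expect is bookkeeping, not conceptual: keeping the composition of isometries $E_{\theta|k}$, $E_{k,\omega}$, $E_{\sigma_{|\sigma|-j}\cdots\sigma_1}$ and the powers of $s$ straight — in particular verifying the cocycle/telescoping identity $E_{\theta|k}^{-1}E_{\theta|m}=E_{\theta_{k+1}\cdots\theta_m}$ and that the conjugated form $F_j=s^{-\xi(S^{|\sigma|-j}\sigma)}E_{\sigma_{|\sigma|-j}\cdots\sigma_1}^{-1}s^{\xi(S^{|\sigma|-j}\sigma)}$ is genuinely the same isometry that Lemma \ref{lem:canonical} produces. A secondary point requiring care is that every application of $\alpha$ (as opposed to $\alpha^{-1}$) must be justified by local rigidity and by the domain restriction to $\mathbb{Q}'$: one must confirm that after deleting the leftmost inclusion the tilings in the chain are never equal to some $T_0^v$ at a level where we still wish to deflate, or handle that boundary case separately. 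Both difficulties are the kind that are dispatched by a careful induction on $|\sigma|$ (resp. on $k$), with the base case $|\sigma|=1$ (resp. $k=1$) being exactly Lemma \ref{lem:canonical} with $l=1$ together with Theorem \ref{keythm}(ii).
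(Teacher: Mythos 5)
Your plan for part (i) is sound and close in spirit to the paper, though the paper gets it in one stroke: it simply applies the single isometry $E_{\sigma_{|\sigma|}\sigma_{|\sigma|-1}\cdots\sigma_{1}}^{-1}$ to the nested chain $\Pi(\varnothing)\subset\Pi(\sigma_{|\sigma|})\subset\cdots\subset\Pi(\sigma_{|\sigma|}\cdots\sigma_{1})$ and rewrites each term via Theorem \ref{piusingts}, which avoids any induction and any hypothesis on $k$. Your outside-in route through Lemma \ref{lem:canonical} works, but note two slips: the prefix edge $\sigma_{1}$ lies in $\Omega_{0}=[N]$, not in general in $\Omega_{1}$ (if $a_{\sigma_{1}}=1$ then $\sigma_{1}\notin\Omega_{1}$), so you should take $l=0$; and Lemma \ref{lem:canonical} carries the hypothesis $k\geq a_{\max}+l$, which can fail at the inner stages of your induction for short words — you only ever need the containment of the single summand $E_{k,\sigma_{1}}T_{k-a_{\sigma_{1}}}^{\overrightarrow{v}(\sigma_{1})}\subset T_{k}$, which holds without that hypothesis, so say so rather than invoking the full disjoint-union statement.

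In part (ii) there is a genuine error in the central identity. A direct computation gives $f_{-\theta|k}^{-1}f_{-\theta|m}=f_{-(\theta_{k+1}\cdots\theta_{m})}$, hence
\begin{equation*}
E_{\theta|k}^{-1}E_{\theta|m}=s^{-\xi(\theta|k)}\,E_{\theta_{k+1}\cdots\theta_{m}}\,s^{\xi(\theta|k)},
\end{equation*}
not $E_{\theta_{k+1}\cdots\theta_{m}}$ as you claim; consequently $E_{\theta|k}^{-1}\Pi(\theta|m)$ is \emph{not} $\Pi((S^{k}\theta)|(m-k))$ but the conjugated copy $s^{-\xi(\theta|k)}E_{\theta_{k+1}\cdots\theta_{m}}s^{\xi(\theta|k)}T_{\xi(\theta|m)}$, i.e.\ a version of $\Pi((S^{k}\theta)|(m-k))$ still subdivided $\xi(\theta|k)$ extra levels. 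Your write-up is in fact self-inconsistent: you assert equality with $\Pi((S^{k}\theta)|(m-k))$ \emph{before} applying $\alpha^{\xi(\theta|k)}$ and then again after, which cannot both hold since $\alpha$ is not the identity; taken literally the argument would contradict the theorem. The repair is exactly the ingredient the paper's proof cites: the commutation rule $\alpha\circ E=sEs^{-1}\circ\alpha$ for any transformation $E$, which gives $\alpha^{\xi(\theta|k)}\circ\bigl(s^{-\xi(\theta|k)}E_{\theta_{k+1}\cdots\theta_{m}}s^{\xi(\theta|k)}\bigr)=E_{\theta_{k+1}\cdots\theta_{m}}\circ\alpha^{\xi(\theta|k)}$, and then $\alpha^{\xi(\theta|k)}T_{\xi(\theta|m)}=T_{\xi(\theta_{k+1}\cdots\theta_{m})}$ lands you on $\Pi((S^{k}\theta)|(m-k))$; with that correction your reduction to truncations, the passage $m\to\infty$, and your derivation of the second identity from the first (with $\theta\mapsto n\theta$, $k=1$) are fine, and the whole of (ii) collapses to the paper's two-line argument from $\Pi(\theta)=E_{\theta}T_{\xi(\theta)}$ plus the commutation rule.
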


\begin{proof}
(i) Equation \ref{heirarchy} is the result of applying $E_{\sigma_{\left%
\vert \sigma\right\vert }\sigma_{\left\vert \sigma\right\vert
-1}...\sigma_{1}}^{-1}$ to the chain of inclusions 
\begin{equation*}
T_{0}=\Pi(\varnothing)\subset\Pi(\sigma_{\left\vert \sigma\right\vert
})\subset\Pi(\sigma_{\left\vert \sigma\right\vert }\sigma_{\left\vert
\sigma\right\vert -1})\subset...\subset\Pi(\sigma_{\left\vert \sigma
\right\vert }\sigma_{\left\vert \sigma\right\vert -1}...\sigma_{2})\subset
\Pi(\sigma_{\left\vert \sigma\right\vert }\sigma_{\left\vert
\sigma\right\vert -1}...\sigma_{1})
\end{equation*}
where we recall that $\Pi(\theta)=E_{\theta}T_{\xi(\theta)}^{\overrightarrow{%
\upsilon}(\theta_{\left\vert \theta\right\vert })}$ (Theorem \ref{piusingts}%
) for all $\theta\in\Sigma_{\ast}^{\dag},$ where $E_{\theta
}:=f_{-\theta}s^{\xi(\theta)}$.

(ii) This follows from $\Pi(\theta)=E_{\theta}T_{\xi(\theta)}^{%
\overrightarrow{\upsilon}(\theta_{\left\vert \theta\right\vert })}$ and $%
\alpha T=sTs^{-1}\alpha$ for any $T:\mathbb{R}^{M}\rightarrow\mathbb{R}^{M}$.
\end{proof}

Taking $k=1$ in (ii) we have%
\begin{equation*}
\alpha^{a_{\theta_{1}}}\Pi(\theta)=s^{a_{\theta_{1}}}f_{\theta_{1}}^{-1}%
\Pi(S\theta)\text{ and }\alpha^{-a_{n}}\Pi(\theta)=s^{-a_{n}}f_{n}\Pi
(n\theta)\text{.}
\end{equation*}
Because $\Pi$ is one-to-one when $(\mathcal{F},\mathcal{G)}$ is locally
rigid, this implies:\ \textit{Given the tiling }$\Pi(\theta)$ it is possible
to: (I)\textit{\ Determine }$\theta_{1}$\textit{\ and therefore }$\theta$%
\textit{\ by means of a sequence of geometrical tests and to calculate }$%
\Pi(S^{a_{\theta _{1}}}\theta),$ essentially by applying $\alpha$ the right
number of times and then applying the appropriate isometry; (II)\textit{\
Transform }$\Pi(\theta )$\textit{\ to }$\Pi(n\theta)$\textit{\ for any }$%
n\in\lbrack N]$,\textit{\ }by applying\textit{\ }$\alpha^{-a_{n}}$
(inflation $a_{n}$ times) and then applying the isometry $s^{-a_{n}}f_{n}$.

\section{Dynamics on tiling spaces}

Here we focus on the situation in \cite{barnsleyvince} where $\left\vert 
\mathcal{V}\right\vert =1$. It appears that the ideas go through in the
general case. Recall that 
\begin{equation*}
\mathbb{T}=\{\Pi(\mathbb{\theta)}:\theta\in\Sigma^{\dag}\}\text{ and }%
\mathbb{T}_{\infty}=\{\Pi(\mathbb{\theta)}:\theta\in\Sigma_{\infty}^{\dag}\}
\end{equation*}
We consider the structure and the action of the inflation/deflation
dynamical system on each of the following two spaces. We restrict attention
to $\left( \mathcal{F},\mathcal{G}\right) $ being locally rigid.

(1) The tiling space is%
\begin{equation*}
\widetilde{\mathbb{T}}:=\mathbb{T}_{\infty}/\sim
\end{equation*}
where $\Pi(\theta)\sim\Pi(\psi)$ iff $E_{1}\Pi(\theta)=E_{2}\Pi(\psi)$ for
some $E_{1},$ $E_{2}\in\mathcal{T}$. Here we assume that $\mathcal{T}$ is
the group generated by the set of isometries that map from the prototile set
to the tilings $\mathbb{T}_{\infty}$. $\mathcal{T}$ may be replaced by any
larger group. Each member of $\widetilde{\mathbb{T}}$ has a representative
in $\mathbb{T}_{\infty}$. We denote the equivalence class of $\Pi(\theta)$
by $\left[ \Pi(\theta)\right] $. In the absence of anything cleverer, the
topology of $\widetilde{\mathbb{T}}$ is the discrete topology.

EXAMPLES: (i) (Fibonacci 1D tilings) $\mathcal{F}_{1}\mathcal{=\{}%
ax,a^{2}x+1-a^{2},a+a^{2}=1,$ $a>0\},$ $\mathcal{T}$ is the set of
1D-translations, or a subgroup of this set of translations, such that any
tiling in $\Pi_{_{1}}(\theta)$ is a union of tiles of the form $gt$ for some 
$g\in\mathcal{T}$ and $t\in\mathcal{P}_{1}=\{[0,a],[a,1]\}$.

(ii) $\mathcal{F}_{2}$ is the golden b IFS described elsewhere. It comprises
two maps and two prototiles. In this case $\mathcal{T=T}_{2}$ is any group
of isometries on $\mathbb{R}^{2}$ that contains pair of isometries%
\begin{equation*}
\begin{pmatrix}
0 & -1 \\ 
1 & 0%
\end{pmatrix}%
\begin{pmatrix}
x \\ 
y%
\end{pmatrix}
+%
\begin{pmatrix}
1 \\ 
0%
\end{pmatrix}
,%
\begin{pmatrix}
1 & 0 \\ 
0 & -1%
\end{pmatrix}%
\begin{pmatrix}
x \\ 
y%
\end{pmatrix}
+%
\begin{pmatrix}
0 \\ 
1%
\end{pmatrix}%
\end{equation*}

(iii) $\mathcal{F}_{3}$ is a different golden $b$ IFS comprising I think 13
maps, in anycase more than two maps. Each map is obtained by composing maps
of $\mathcal{F}_{2}$. The prototile set $\mathcal{P}_{3}$ comprises eight
prototiles and $\mathcal{T}=\mathcal{T}_{3}$ is for example the group of
translations on $\mathbb{R}^{2}$. The set of tilings of in this case are
essentially the same as in the case (ii) but the addressing structure is
different.

(2) The tiling space is 
\begin{equation*}
\widehat{\mathbb{T}}=(\mathbb{T}_{\infty}\mathbb{\times}\mathcal{T)}/\sim
\end{equation*}
where $\mathbb{T}_{\infty}\mathbb{\times}\mathcal{T}$ is equipped with the
metric $d_{\mathbb{T}_{\infty}}+d_{\mathcal{T}}$ and $\Pi(\theta)\times
E_{1}\sim\Pi(\psi)\times E_{2}$ iff $E_{1}\Pi(\theta)=E_{2}\Pi(\psi)$ with
the induced metric. This is the tiling space considered, for example by
Anderson and Putnam and many others. It is relevant to spectral analysis of
tilings and, in cases where $A$ is a polygon, to interval exchange dynamical
systems.

\subsection{Case (1) Representations of $\protect\widetilde{\mathbb{T}}%
\mathbb{=T}_{\infty}/\sim$ and inflation/deflation dynamics}

Define 
\begin{equation*}
\widetilde{\Sigma_{\infty}^{\dag}}=\Sigma_{\infty}^{\dag}/\sim
\end{equation*}
where $\theta\sim\psi$ when there are $p,q\in\mathbb{N}$ such that $\xi
(\theta|p)=\xi(\psi|p)$ and $S^{p}\theta=S^{q}\psi$. Denote the equivalence
class to which $\theta$ belongs by $\left[ \theta\right] $. **We also use
square brackets in another way elsewhere in the paper. We endow $\widetilde{%
\Sigma_{\infty}^{\dag}}$ with the discrete topology for now.

\begin{lemma}
\label{prevlem}A homeomorphism $\widetilde{\Pi}:\widetilde{\Sigma_{\infty
}^{\dag}}\rightarrow\widetilde{\mathbb{T}}$ is well defined by $\widetilde{%
\Pi }(\left[ \theta\right] )=$ $\left[ \Pi(\theta)\right] $.
\end{lemma}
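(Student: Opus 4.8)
The plan is to show that the map $\widetilde{\Pi}$ is well defined, bijective, and a homeomorphism by leveraging Corollary \ref{cor02}, which already establishes that $\Pi:\Sigma^{\dag}\rightarrow\mathbb{T}$ is a homeomorphism, together with Theorem \ref{main}, which characterizes exactly when $\Pi(\theta)=E\Pi(\psi)$ for $E\in\mathcal{T}$. The key observation is that the equivalence relation on $\Sigma_{\infty}^{\dag}$ defining $\widetilde{\Sigma_{\infty}^{\dag}}$ and the equivalence relation on $\mathbb{T}_{\infty}$ defining $\widetilde{\mathbb{T}}$ are designed to match up precisely under $\Pi$.

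First I would verify that $\widetilde{\Pi}$ is well defined, i.e. that $\theta\sim\psi$ implies $\Pi(\theta)\sim\Pi(\psi)$. Suppose $\theta\sim\psi$, so there are $p,q\in\mathbb{N}$ with $\xi(\theta|p)=\xi(\psi|q)$ and $S^{p}\theta=S^{q}\psi$. By the ``one direction'' of Theorem \ref{main} (with the common value $\xi(\theta|p)=\xi(\psi|q)$ guaranteeing that $E:=E_{\theta|p}E_{\psi|q}^{-1}$ makes sense as an element of $\mathcal{T}$, since $E_{\theta|p}$ and $E_{\psi|q}$ are isometries), we get $\Pi(\theta)=E_{\theta|p}E_{\psi|q}^{-1}\Pi(\psi)$, which gives $E_{\psi|q}^{-1}\Pi(\psi)=E_{\theta|p}^{-1}\Pi(\theta)$, hence $\Pi(\theta)\sim\Pi(\psi)$ in the sense defined (take $E_{1}=E_{\theta|p}^{-1}$, $E_{2}=E_{\psi|q}^{-1}$). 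Conversely, if $\Pi(\theta)\sim\Pi(\psi)$, so $E_{1}\Pi(\theta)=E_{2}\Pi(\psi)$ for some $E_{1},E_{2}\in\mathcal{T}$, then $\Pi(\theta)=E_{1}^{-1}E_{2}\Pi(\psi)$ with $E:=E_{1}^{-1}E_{2}\in\mathcal{T}$, and the converse direction of Theorem \ref{main} supplies $p,q\in\mathbb{N}_{0}$ with $\xi(\theta|p)=\xi(\psi|q)$ and $S^{p}\theta=S^{q}\psi$, i.e. $\theta\sim\psi$. This simultaneously proves well-definedness and injectivity of $\widetilde{\Pi}$ on equivalence classes; surjectivity is immediate since every class in $\widetilde{\mathbb{T}}$ has a representative $\Pi(\theta)$ with $\theta\in\Sigma_{\infty}^{\dag}$ (using $\mathbb{T}_{\infty}=\Pi(\Sigma_{\infty}^{\dag})$), and $\widetilde{\Pi}([\theta])=[\Pi(\theta)]$.

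It remains to check the topological statement, namely that $\widetilde{\Pi}$ is a homeomorphism. Since both $\widetilde{\Sigma_{\infty}^{\dag}}$ and $\widetilde{\mathbb{T}}$ are endowed with the discrete topology (as stated just before the lemma and in the surrounding discussion), any bijection between them is automatically a homeomorphism. Thus once the bijection is established the homeomorphism claim is trivial, and the proof is essentially complete.

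I do not anticipate a serious obstacle here: the lemma is a direct corollary of Theorem \ref{main} and Corollary \ref{cor02}, and the bulk of the work is simply matching the two equivalence relations, which have been defined precisely so as to correspond. The only point requiring a little care is the bookkeeping in the well-definedness direction --- making sure that $E_{\theta|p}E_{\psi|q}^{-1}$ is genuinely an element of the group $\mathcal{T}$ (which it is, being a composition of the isometries $E_{\theta|p}=f_{-\theta|p}s^{\xi(\theta|p)}$ and $E_{\psi|q}^{-1}$, both in $\mathcal{T}$ by Theorem \ref{piusingts} and the definition of $\mathcal{T}$) --- and keeping straight that the indices $p,q$ in the definition of $\sim$ on $\Sigma_{\infty}^{\dag}$ and those produced by Theorem \ref{main} live in $\mathbb{N}$ versus $\mathbb{N}_{0}$; this discrepancy is harmless since the relation generated is the same. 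If one wished to give $\widetilde{\Sigma_{\infty}^{\dag}}$ a finer topology later, continuity of $\widetilde{\Pi}$ and its inverse would follow from continuity of $\Pi$ (Corollary \ref{cor02}) together with the fact that the quotient maps are open, but under the discrete topology this is not needed.
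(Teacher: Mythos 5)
Your proof is correct and takes the same route as the paper, whose proof consists simply of the citation ``This follows from Theorem \ref{main}'': both directions of that theorem are exactly what match the two equivalence relations, and the discrete topologies make the homeomorphism claim automatic. You have merely filled in the bookkeeping (well-definedness, injectivity, surjectivity) that the paper leaves implicit.
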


\begin{proof}
This follows from Theorem \ref{main}/10.
\end{proof}

Since the elements $\{s^{a_{1}},s^{a_{2}},...,s^{a_{n}}\}$ are relatively
prime, there is an $M$ such that, for any $m\geq M$, there is an $l$ and
indices $i_{1},i_{2},...,i_{l}$ such that $%
m=a_{i_{1}}+a_{i_{2}}+...+a_{i_{l}}$. Therefore, for given $%
\theta\in\Sigma_{\infty}^{\dag}$ there is a $j$ such that $\xi(\theta|j)>M$,
and there exists $l$ and indices $i_{1},i_{2},...,i_{l}$ such that $%
\xi(i_{1}i_{2}...i_{l})=\xi(\theta|j)-1$. We define a shift map $\widetilde{S%
}:\widetilde{\Sigma_{\infty}^{\dag}}\rightarrow \widetilde{%
\Sigma_{\infty}^{\dag}}$ according to%
\begin{equation*}
\widetilde{S}(\left[ \theta\right] )=\left[ i_{1}i_{2}...i_{l}\theta
_{j}\theta_{j+1}...\right]
\end{equation*}
Likewise, we choose indices $i_{1}^{\prime},i_{2}^{\prime},...,i_{l^{%
\prime}}^{\prime}$ such that $\xi(i_{1}^{\prime}i_{2}^{\prime}...i_{l}^{%
\prime})=\xi(\theta|j)+1$ and define the inverse shift map $\widetilde{S}%
^{-1}:\widetilde{\Sigma_{\infty}^{\dag}}\rightarrow\widetilde{\Sigma_{\infty
}^{\dag}}$ according to 
\begin{equation*}
\widetilde{S}^{-1}(\left[ \theta\right] )=\left[ i_{1}^{\prime}i_{2}^{%
\prime}...i_{l^{\prime}}^{\prime}\theta_{j}\theta_{j+1}...\right]
\end{equation*}
As an example, for the case where $a_{1}=1$ we can choose 
\begin{align*}
\widetilde{S}(\widetilde{\theta}) & =\left[ 11...1\theta_{2}\theta _{3}...%
\right] \text{ where there are initially }\theta_{1}-1\text{ ones,} \\
\widetilde{S}^{-1}(\widetilde{\theta}) & =\left[ 11...1\theta_{2}\theta
_{3}...\right] \text{ where there are initially }\theta_{1}+1\text{ ones}
\end{align*}

\begin{theorem}
\label{conj1}If $(\mathcal{F},\mathcal{G)}$ is locally rigid, then the
symbolic (shift) dynamical system $\widetilde{S}:\widetilde{\Sigma^{\dag}}%
\rightarrow\widetilde{\Sigma^{\dag}}$ is well defined and conjugate to the
deflation/inflation dynamical system $\widetilde{\alpha}:\widetilde{\mathbb{T%
}}\rightarrow\widetilde{\mathbb{T}}$ that is well defined by 
\begin{align*}
\widetilde{\alpha}\left[ \Pi(\theta)\right] & =\left[ \Pi(\widetilde{S}(%
\left[ \theta\right] ))\right] \\
\widetilde{\alpha}^{-1}\left[ \Pi(\theta)\right] & =\left[ \Pi (\widetilde{S}%
^{-1}(\left[ \theta\right] ))\right]
\end{align*}
The following diagrams commute:%
\begin{equation*}
\begin{array}{ccc}
\widetilde{\Sigma^{\dag}} & \overset{\widetilde{S}}{\rightarrow} & 
\widetilde{\Sigma^{\dag}} \\ 
\widetilde{\Pi}\downarrow &  & \downarrow\widetilde{\Pi} \\ 
\widetilde{\mathbb{T}} & \underset{\widetilde{\alpha}}{\rightarrow} & 
\widetilde{\mathbb{T}}%
\end{array}
\text{ and }%
\begin{array}{ccc}
\widetilde{\Sigma^{\dag}} & \overset{\widetilde{S}^{-1}}{\longleftarrow} & 
\widetilde{\Sigma^{\dag}} \\ 
\widetilde{\Pi}^{-1}\uparrow &  & \uparrow\widetilde{\Pi}^{-1} \\ 
\widetilde{\mathbb{T}} & \underset{\widetilde{\alpha}^{-1}}{\longleftarrow}
& \widetilde{\mathbb{T}}%
\end{array}
\text{.}
\end{equation*}
\end{theorem}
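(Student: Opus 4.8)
The plan is to reduce the whole statement to Lemma \ref{prevlem} (that $\widetilde{\Pi}$ is a bijection) together with the well-definedness of $\widetilde{S}$, and then, as a final cosmetic point, to identify $\widetilde{\alpha}$ with the geometric deflation operator $\alpha$ of Section \ref{inflation}. Once $\widetilde{S}:\widetilde{\Sigma_{\infty}^{\dag}}\rightarrow\widetilde{\Sigma_{\infty}^{\dag}}$ is known to be a well-defined bijection, the map $\widetilde{\alpha}:=\widetilde{\Pi}\circ\widetilde{S}\circ\widetilde{\Pi}^{-1}$ is automatically a well-defined bijection of $\widetilde{\mathbb{T}}$ conjugate to $\widetilde{S}$, and the two displayed diagrams are just this conjugacy and its inverse. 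So the real content is the well-definedness of $\widetilde{S}$.

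First I would establish that $\widetilde{S}$, and symmetrically $\widetilde{S}^{-1}$, is a well-defined bijection of $\widetilde{\Sigma_{\infty}^{\dag}}$. Existence of an admissible level $j$ (with $\xi(\theta|j)>M$) and of a directed path $i_{1}i_{2}\cdots i_{l}$ in $\mathcal{G}^{\dag}$ of the prescribed total $\xi$-value ending at the vertex where the retained tail of $\theta$ begins uses the relative primeness $\gcd\{a_{1},\dots,a_{N}\}=1$ (so every sufficiently large integer is a sum of the $a_{n}$) together with strong connectedness of $\mathcal{G}$ (to prepend a short connecting path realizing the prescribed terminal vertex without changing enough the total). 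The essential point is \emph{independence of all choices}: if $\psi$ and $\psi'$ arise from two admissible re-basings of the same $\theta$ at heights $j,j'$, then after shifting each tail to the same position in $\theta$ one computes partial sums of $\xi$ and finds $\xi(\psi|(l+r))=\xi(\psi'|(l'+r'))$ together with $S^{l+r}\psi=S^{l'+r'}\psi'$, so $\psi\sim\psi'$; the identical bookkeeping applied to two $\sim$-equivalent representatives $\theta\sim\theta'$ of a class shows the output class is unchanged. This gives $\widetilde{S}$ well defined; the analogous argument defines $\widetilde{S}^{-1}$, and checking $\widetilde{S}\circ\widetilde{S}^{-1}=\widetilde{S}^{-1}\circ\widetilde{S}=\mathrm{id}$ makes $\widetilde{S}$ a bijection. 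Since both spaces carry the discrete topology, it is a homeomorphism for free.

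Next I would set $\widetilde{\alpha}:=\widetilde{\Pi}\circ\widetilde{S}\circ\widetilde{\Pi}^{-1}$ and $\widetilde{\alpha}^{-1}:=\widetilde{\Pi}\circ\widetilde{S}^{-1}\circ\widetilde{\Pi}^{-1}$. By Lemma \ref{prevlem} (which rests on Theorem \ref{main}), $\widetilde{\Pi}$ is a homeomorphism with $\widetilde{\Pi}([\phi])=[\Pi(\phi)]$, so $\widetilde{\alpha}$ is a well-defined bijection of $\widetilde{\mathbb{T}}$ conjugate to $\widetilde{S}$, and unwinding definitions gives precisely $\widetilde{\alpha}[\Pi(\theta)]=\widetilde{\alpha}\,\widetilde{\Pi}([\theta])=\widetilde{\Pi}\,\widetilde{S}([\theta])=[\Pi(\widetilde{S}([\theta]))]$, and likewise for $\widetilde{\alpha}^{-1}$; this is the asserted defining formula, and well-definedness directly from it also follows, since $[\Pi(\theta)]=[\Pi(\theta')]$ forces $[\theta]=[\theta']$ by injectivity of $\widetilde{\Pi}$, hence $\widetilde{S}([\theta])=\widetilde{S}([\theta'])$. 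The left-hand diagram is the relation $\widetilde{\Pi}\circ\widetilde{S}=\widetilde{\alpha}\circ\widetilde{\Pi}$ and the right-hand one is its inverse. Finally, to justify the name "deflation/inflation", I would verify $[\alpha\,\Pi(\theta)]=\widetilde{\alpha}[\Pi(\theta)]$ and dually for $\widetilde{\alpha}^{-1}$: writing $\Pi(\theta)=\bigcup_{m}E_{\theta|m}T_{\xi(\theta|m)}$ and using $\alpha T_{n}=T_{n-1}$ with $\alpha E=sEs^{-1}\alpha$ from the proof of Theorem \ref{keythm2}(ii), the level-$m$ patch of $\alpha\Pi(\theta)$ is $sE_{\theta|m}s^{-1}T_{\xi(\theta|m)-1}$; for $\psi$ a representative of $\widetilde{S}([\theta])$ the $\xi$-levels of the exhaustion of $\Pi(\psi)$ are exactly those of $\Pi(\theta)$ decreased by one, and the composite change-of-basis isometry differs by a fixed element of $\mathcal{T}$ (here one uses $s\,\mathcal{T}\,s^{-1}=\mathcal{T}$, since $s$-conjugation of an isometry is again an isometry), so in the limit $\alpha\Pi(\theta)=E^{\ast}\Pi(\psi)$ with $E^{\ast}\in\mathcal{T}$; the inflation statement is the mirror computation from $\alpha^{-a_{n}}\Pi(\theta)=s^{-a_{n}}f_{n}\Pi(n\theta)$ in Theorem \ref{keythm2}.

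I expect the well-definedness of $\widetilde{S}$ on $\widetilde{\Sigma_{\infty}^{\dag}}$ to be the main obstacle: it is the one place where the number-theoretic input (relative primeness, hence the numerical-semigroup fact that all large integers are representable) must be combined with the graph input (strong connectedness, to hit the prescribed terminal vertex) and then followed by the partial-sum bookkeeping showing that every admissible re-basing of $\theta$, at any height, lands in one and the same $\sim$-class. Everything downstream of that — the definition of $\widetilde{\alpha}$, the conjugacy, the commuting diagrams, and the identification with $\alpha$ — is formal given Lemma \ref{prevlem} and Theorem \ref{keythm2}.
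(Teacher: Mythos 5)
Your proposal is correct and follows essentially the paper's own route: the paper's proof is only the remark that the theorem follows from Theorem \ref{main} (through Lemma \ref{prevlem}) together with the calculations showing the equivalence classes are respected, and that is exactly what you carry out — the partial-sum bookkeeping for well-definedness of $\widetilde{S}$, the formal conjugation through $\widetilde{\Pi}$, and the identification of $\widetilde{\alpha}$ with the geometric deflation via Theorem \ref{keythm2}, the last point being content the paper defers to the proposition that follows. One small caveat: your appeal to strong connectedness to produce an admissible prefix of arbitrary prescribed large $\xi$-value joining onto a prescribed vertex is not valid in general for $\left\vert \mathcal{V}\right\vert >1$ (the $\xi$-values of closed paths can share a common period greater than $1$ even when $\gcd\{a_{1},\dots,a_{N}\}=1$), but this is immaterial here since the section explicitly restricts to $\left\vert \mathcal{V}\right\vert =1$.
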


\begin{proof}
Follows from Theorem 10 and calculations which prove that the equivalence
classes are respected.
\end{proof}

Theorem \ref{conj1} provides conjugacies between different tilings and their
inflation dynamics. For example, the tiling space associated with the one
dimensional Fibonacci TIFS $\{\mathbb{R}%
^{2};f_{1}(x)=ax,f_{2}=a^{2}x+1-a^{2}\}$ where $\mathcal{T}$ is
two-dimensional translations, is homeomorphic to the Golden-b tiling space
where $\mathcal{T}$ is the two-dimensional euclidean group with reflections.
The shift $S\ $acts conjugately on both systems and results such as both
having the same topological entropy, partition function etc, with respect to
the discrete topology. Another nice family of examples can be constructed
using chair tilings (which are locally rigid with respect to the appropriate
IFS\ and group $\mathcal{T}$).

To conclude this section we examine the relationship between $\widetilde{%
\alpha}$ and $\alpha$ (see Section \ref{canonical}). We make the following
observations, which are based specific results earlier in this paper. The
following observations connect the action of $\alpha^{\xi (\theta|k)}$ on $%
\Pi(\theta),$ the usual shift $S:\Sigma_{\infty}^{\dag
}\rightarrow\Sigma_{\infty}^{\dag}$, and the action of $\widetilde{\alpha }%
^{\xi(\theta|k)}$ on $\left[ \Pi(\theta)\right] $.

\begin{proposition}
If $(\mathcal{F},\mathcal{G)}$ is locally rigid, then for all $\theta\in
\Sigma_{\infty}^{\dag}$, $n\in\left[ N\right] ,$ $k\in\mathbb{N}_{0},$ $%
\widetilde{\alpha}^{\xi(\theta|k)}\left[ \Pi(\theta)\right] =\left[
\alpha^{\xi(\theta|k)}\Pi(\theta)\right] =\left[ \Pi(S^{k}\theta)\right] $
and $\widetilde{\alpha}^{-a_{n}}\left[ \Pi(\theta)\right] =\Pi(n\theta)$
\end{proposition}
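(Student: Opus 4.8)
The plan is to read the statement off Theorem \ref{keythm2}(ii) and Theorem \ref{conj1}, the only real work being to pass from the ``bare'' tilings produced by $\alpha$ to the $\mathcal{T}$-equivalence classes living in $\widetilde{\mathbb{T}}$. I would organise it as the two equalities $\widetilde{\alpha}^{\xi(\theta|k)}[\Pi(\theta)]=[\alpha^{\xi(\theta|k)}\Pi(\theta)]$ and $[\alpha^{\xi(\theta|k)}\Pi(\theta)]=[\Pi(S^{k}\theta)]$, together with the analogous chain for $\widetilde{\alpha}^{-a_{n}}$.

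For the second equality, set $m=\xi(\theta|k)$. From the relation $\alpha T=sTs^{-1}\alpha$ used in the proof of Theorem \ref{keythm2} one gets, by induction on $m$, that $\alpha^{m}E=(s^{m}Es^{-m})\alpha^{m}$ for any affine $E$. Hence $\alpha^{m}\Pi(\theta)=\alpha^{m}E_{\theta|k}\,(E_{\theta|k}^{-1}\Pi(\theta))=(s^{m}E_{\theta|k}s^{-m})\,\alpha^{m}E_{\theta|k}^{-1}\Pi(\theta)=(s^{m}E_{\theta|k}s^{-m})\,\Pi(S^{k}\theta)$, the last step being Theorem \ref{keythm2}(ii). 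Since $E_{\theta|k}=f_{-\theta|k}s^{\xi(\theta|k)}$, the map $s^{m}E_{\theta|k}s^{-m}=s^{\xi(\theta|k)}f_{-\theta|k}$ has scaling ratio $1$ and lies in the group generated by $\mathcal{F}$ and multiplication by $s$, so it is an isometry in $\mathcal{T}$; therefore $[\alpha^{\xi(\theta|k)}\Pi(\theta)]=[\Pi(S^{k}\theta)]$. The same argument applied to the second identity of Theorem \ref{keythm2}(ii), namely $\alpha^{-a_{n}}\Pi(\theta)=s^{-a_{n}}f_{n}\Pi(n\theta)$, with $s^{-a_{n}}f_{n}\in\mathcal{T}$ an isometry, gives $[\alpha^{-a_{n}}\Pi(\theta)]=[\Pi(n\theta)]$.

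For the first equality I would use the commuting squares of Theorem \ref{conj1}: iterating $\widetilde{\alpha}\widetilde{\Pi}=\widetilde{\Pi}\widetilde{S}$ gives $\widetilde{\alpha}^{n}[\Pi(\theta)]=[\Pi(\widetilde{S}^{n}[\theta])]$, and by Lemma \ref{prevlem} (equivalently Corollary \ref{cor02}) $\widetilde{\Pi}$ is a bijection, so it suffices to prove the purely symbolic identities $\widetilde{S}^{\xi(\theta|k)}[\theta]=[S^{k}\theta]$ and $\widetilde{S}^{-a_{n}}[\theta]=[n\theta]$ in $\widetilde{\Sigma_{\infty}^{\dag}}$. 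Here the definition of $\widetilde{S}$ is used: one application of $\widetilde{S}$ replaces a prefix of $\theta$ by a word whose $\xi$-weight is one less, so (using the description of $\sim$ in Theorem \ref{main} together with the well-definedness asserted in Theorem \ref{conj1}) $\widetilde{S}^{j}[\theta]$ is the class of any $\psi$ for which there are $p,q\in\mathbb{N}_{0}$ with $S^{p}\theta=S^{q}\psi$ and $\xi(\theta|p)=\xi(\psi|q)+j$. Taking $\psi=S^{k}\theta$ and using $\xi(S^{k}\theta|m)=\xi(\theta|(k+m))-\xi(\theta|k)$ yields the first identity; taking $\psi=n\theta$ and using $\xi(n\theta|(m+1))=a_{n}+\xi(\theta|m)$ yields the second. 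Combining the two equalities proves the Proposition.

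The step I expect to be the main obstacle is the last one: verifying carefully that $\widetilde{S}^{j}$ is exactly ``shift the $\xi$-clock down by $j$'' and that this is well defined on $\widetilde{\Sigma_{\infty}^{\dag}}$ independently of the choices ($j$, the replacing word) made in the definition of $\widetilde{S}$ --- which, concretely, needs the semigroup compatibility of the relations $S^{p}\theta=S^{q}\psi$, $\xi(\theta|p)=\xi(\psi|q)+j$ under composition. This is really a repackaging of facts already established in proving Theorem \ref{conj1}. An alternative that bypasses re-examining $\widetilde{S}$ is to prove the single identity $\widetilde{\alpha}[Q]=[\alpha Q]$ for all $[Q]\in\widetilde{\mathbb{T}}$ (reduce to $Q=\Pi(\theta)$, write $\Pi(\widetilde{S}[\theta])=\bigcup_{m}\Pi(\widetilde{S}[\theta]|m)$, and, using Theorem \ref{piusingts}, telescope: every power of $s$ and every $f$-factor cancels so that $\Pi(\widetilde{S}[\theta])=G\,\alpha\Pi(\theta)$ for one fixed isometry $G\in\mathcal{T}$), after which $\widetilde{\alpha}^{\xi(\theta|k)}[\Pi(\theta)]=[\alpha^{\xi(\theta|k)}\Pi(\theta)]$ is immediate and the second paragraph finishes the proof. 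Either way the genuinely new content is slight: everything reduces to Theorem \ref{keythm2} plus the observation that each scaling factor that appears collapses to $1$.
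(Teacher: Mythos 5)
Your argument is correct and is essentially the paper's own route: the paper disposes of the Proposition with the single remark that it ``follows from Theorem \ref{keythm2}(ii)'', and your proof is exactly that derivation spelled out --- using $\alpha^{m}E=(s^{m}Es^{-m})\alpha^{m}$ to see that $\alpha^{\xi(\theta|k)}\Pi(\theta)$ and $\Pi(S^{k}\theta)$ (respectively $\alpha^{-a_{n}}\Pi(\theta)$ and $\Pi(n\theta)$) differ by an isometry of $\mathcal{T}$, hence define the same class, and invoking the definition of $\widetilde{\alpha}$ via $\widetilde{S}$ from Theorem \ref{conj1} for the remaining equality. The only added content is your explicit verification of the symbolic identities $\widetilde{S}^{\xi(\theta|k)}[\theta]=[S^{k}\theta]$ and $\widetilde{S}^{-a_{n}}[\theta]=[n\theta]$, which the paper leaves implicit.
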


\begin{proof}
Follows from Theorem \ref{keythm2} (ii).
\end{proof}

\subsection{Case (2) Representations of $\protect\widehat{\mathbb{T}}=(%
\mathbb{T}_{\infty}\mathbb{\times}\mathcal{T)}/\sim$ and inflation/deflation
dynamics}

In this case the tiling space is 
\begin{equation*}
\widehat{\mathbb{T}}=(\mathbb{T}_{\infty}\mathbb{\times}\mathcal{T)}/\sim
\end{equation*}
where $\mathbb{T}_{\infty}\mathbb{\times}\mathcal{T}$ is equipped with the
metric $d_{\mathbb{T}_{\infty}}+d_{\mathcal{T}}$ and $\Pi(\theta)\times
E\sim\Pi(\psi)\times E^{\prime}$ iff $E\Pi(\theta)=E^{\prime}\Pi(\psi)$ with
the induced metric. The induced metric on $\widehat{\mathbb{T}}$ is denoted $%
d_{\widehat{\mathbb{T}}}$. Here we let $\mathcal{T}$ be any group of
isometries on $\mathbb{R}^{M}$ that contains the group generated by the set
of isometries that map the set of prototiles into the tilings. We assume
that $(\mathcal{F},\mathcal{G)}$ is locally rigid so that Theorem \ref{mainG}
applies. To simplify notation, let the equivalence class in $\widehat{%
\mathbb{T}}$ that contains $\Pi(\theta)\times E$ be 
\begin{equation*}
\widehat{\Psi}(\Pi(\theta),E)=\left\{ \left( \Pi(\psi),E^{\prime}\right)
\in\Sigma_{\infty}^{\dag}\times\mathcal{T}:p,q\in\mathbb{N}_{0},EE^{^{\prime
}-1}=f_{-\psi|q}f_{-\theta|p}^{-1},\xi(\theta|p)=\xi(\psi|q),S^{p}\theta
=S^{q}\psi\right\}
\end{equation*}

Similarly we define, for each $\theta\times E\in\Sigma_{\infty}^{\dag}\times%
\mathcal{T}$,%
\begin{equation*}
\Psi(\theta,E)=\left\{ \left( \psi,E^{\prime}\right) \in\Sigma_{\infty
}^{\dag}\times\mathcal{T}:p,q\in\mathbb{N}_{0},EE^{^{\prime}-1}=f_{-\psi
|q}f_{-\theta|p}^{-1},\xi(\theta|p)=\xi(\psi|q),S^{p}\theta=S^{q}\psi\right%
\} .
\end{equation*}
That is, $\Psi(\theta,E)$ is a member of $\Sigma_{\infty}^{\dag}\times%
\mathcal{T}/\sim$ where $(\theta,E)\sim\left( \psi,E^{\prime}\right) $ iff $%
\Psi(\theta,E)=\Psi\left( \psi,E^{\prime}\right) .$

\begin{lemma}
$\widehat{\Psi}(\Pi(\theta),E)=\widehat{\Psi}(\Pi(\psi),E^{\prime})$ if and
only if $\Psi(\theta,E)=\Psi(\psi,E^{\prime})$.
\end{lemma}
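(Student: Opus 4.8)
The statement is best read as saying that the map $(\psi ,E^{\prime })\mapsto (\Pi (\psi ),E^{\prime })$ identifies the family of sets $\Psi (\theta ,E)$ in $\Sigma _{\infty }^{\dag }\times \mathcal{T}$ with the family of sets $\widehat{\Psi }(\Pi (\theta ),E)$ in $\mathbb{T}_{\infty }\times \mathcal{T}$, and hence that $\Pi $ descends to a bijection $(\Sigma _{\infty }^{\dag }\times \mathcal{T})/\!\sim \; \to \; \widehat{\mathbb{T}}$. The plan is therefore: (a) observe that membership of $(\psi ,E^{\prime })$ in $\Psi (\theta ,E)$ and membership of $(\Pi (\psi ),E^{\prime })$ in $\widehat{\Psi }(\Pi (\theta ),E)$ are expressed by one and the same predicate in the data $\theta ,E,\psi ,E^{\prime }$ --- the symbol $\Pi $ occurs in the second set only as a relabelling of the first coordinate; and (b) conclude using that $\Pi $ is a bijection.

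For (b), Corollary \ref{cor02} gives that $\Pi :\Sigma _{\infty }^{\dag }\rightarrow \mathbb{T}_{\infty }$ is a homeomorphism, so $\phi :(\psi ,E^{\prime })\mapsto (\Pi (\psi ),E^{\prime })$ is a bijection of $\Sigma _{\infty }^{\dag }\times \mathcal{T}$ onto $\mathbb{T}_{\infty }\times \mathcal{T}$, and by (a) it carries $\Psi (\theta ,E)$ onto $\widehat{\Psi }(\Pi (\theta ),E)$ for every $(\theta ,E)$. Since a bijection both preserves and reflects equality of subsets of its domain, $\Psi (\theta ,E)=\Psi (\psi ,E^{\prime })$ holds if and only if $\phi (\Psi (\theta ,E))=\phi (\Psi (\psi ,E^{\prime }))$, i.e. if and only if $\widehat{\Psi }(\Pi (\theta ),E)=\widehat{\Psi }(\Pi (\psi ),E^{\prime })$, which is the asserted equivalence.

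The one point that deserves care, and which I would establish first, is that the explicit set written down for $\widehat{\Psi }(\Pi (\theta ),E)$ really is the $\sim $-equivalence class of $\Pi (\theta )\times E$ in $\widehat{\mathbb{T}}$ (and likewise that $\Psi (\theta ,E)$ is the corresponding class in $(\Sigma _{\infty }^{\dag }\times \mathcal{T})/\!\sim$), so that the Lemma says what it should. This is exactly Theorem \ref{main} applied to the rewriting $\Pi (\theta )=(E^{-1}E^{\prime })\Pi (\psi )$ of $E\Pi (\theta )=E^{\prime }\Pi (\psi )$, together with the identity $E_{\theta |p}E_{\psi |q}^{-1}=f_{-\theta |p}f_{-\psi |q}^{-1}$, which holds because $E_{\omega }=f_{-\omega }s^{\xi (\omega )}$ and the scaling factors $s^{\xi (\theta |p)}$ and $s^{-\xi (\psi |q)}$ cancel precisely when $\xi (\theta |p)=\xi (\psi |q)$. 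Getting the order of composition and the placement of the inverses right in this reduction is the only genuinely delicate step; once the two explicit descriptions are in hand, the equivalence in the Lemma is the purely formal transport-of-structure argument of the previous paragraph, requiring no further input.
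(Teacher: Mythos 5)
Your argument is correct: since $\Psi(\theta,E)$ and $\widehat{\Psi}(\Pi(\theta),E)$ are cut out by the identical predicate in $(\theta,E,\psi,E^{\prime})$ and, under the standing local-rigidity hypothesis, Corollary \ref{cor02} makes $(\psi,E^{\prime})\mapsto(\Pi(\psi),E^{\prime})$ a bijection, equality of the two kinds of classes transports back and forth exactly as you say. The paper states this lemma without proof, treating it as immediate from the definitions together with Theorem \ref{main} (which also underlies your check that the displayed sets are the $\sim$-equivalence classes), so your route coincides with the intended one.
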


Define a metric space $(\mathcal{X},d_{\mathcal{X}}),$ in the obvious way,
by $\mathcal{X=\{}\Psi(\theta,E):(\theta,E)\in\Sigma_{\infty}^{\dag}\times%
\mathcal{T\}}$ where 
\begin{equation*}
d_{\mathcal{X}}(\Psi(\theta,E),\Psi(\psi,H))=\inf\{d(\theta^{\prime},\psi^{%
\prime})+d_{\mathcal{T}}(E^{\prime},H^{\prime}):(\theta^{\prime
},E^{\prime})\in\Psi(\theta,E),(\psi^{\prime},H^{\prime})\in\Psi(\psi,H)\}
\end{equation*}
is the induced metric.

\begin{lemma}
A homeomorphism $\widehat{\Pi}:\mathcal{X\rightarrow}\widehat{\mathbb{T}}$
is well defined by%
\begin{equation*}
\widehat{\Pi}(\Psi(\theta,E))=\widehat{\Psi}(\Pi\left( \theta\right) ,E)
\end{equation*}
\end{lemma}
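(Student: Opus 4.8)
The plan is to recognize $\widehat{\Pi}$ as nothing more than the bijection that the homeomorphism $\Pi$ induces between two quotient spaces, and then to promote that topological homeomorphism to one of the metric spaces $(\mathcal{X},d_{\mathcal{X}})$ and $(\widehat{\mathbb{T}},d_{\widehat{\mathbb{T}}})$ by a uniform-continuity argument. First I would note that $\widehat{\Pi}$ is exactly the map on $\sim$-classes induced by $(\theta,E)\mapsto(\Pi(\theta),E)$; well-definedness and injectivity are then immediate from the preceding lemma (that $\widehat{\Psi}(\Pi(\theta),E)=\widehat{\Psi}(\Pi(\psi),E')$ if and only if $\Psi(\theta,E)=\Psi(\psi,E')$), which itself rests on Theorem \ref{main}. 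Surjectivity is free: by definition $\mathbb{T}_{\infty}=\Pi(\Sigma_{\infty}^{\dag})$, so every class in $\widehat{\mathbb{T}}$ contains a pair $(\Pi(\theta),E)$ with $\theta\in\Sigma_{\infty}^{\dag}$, $E\in\mathcal{T}$, and such a class is exactly $\widehat{\Psi}(\Pi(\theta),E)=\widehat{\Pi}(\Psi(\theta,E))$.

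For the homeomorphism claim I would proceed as follows. By Corollary \ref{cor02}, $\Pi:\Sigma^{\dag}\rightarrow\mathbb{T}$ is a homeomorphism; restricting to the closed, hence compact, subset $\Sigma_{\infty}^{\dag}\subset\Sigma^{\dag}$ yields a homeomorphism $\Pi:\Sigma_{\infty}^{\dag}\rightarrow\mathbb{T}_{\infty}$ between compact metric spaces, so both $\Pi$ and $\Pi^{-1}$ are \emph{uniformly} continuous. I would then check directly that $\widehat{\Pi}$ is bi-uniformly-continuous for the two infimum-type metrics: given two classes of $\mathcal{X}$ that are $d_{\mathcal{X}}$-close, choose representatives $(\theta',E')$ and $(\psi',H')$ nearly realizing that infimum, so that $d(\theta',\psi')$ and $d_{\mathcal{T}}(E',H')$ are both small; by the preceding lemma $(\Pi(\theta'),E')$ and $(\Pi(\psi'),H')$ are then representatives of the corresponding $\widehat{\Pi}$-images, and uniform continuity of $\Pi$ forces $d_{\mathbb{T}_{\infty}}(\Pi(\theta'),\Pi(\psi'))$ small, so those images are $d_{\widehat{\mathbb{T}}}$-close; the mirror-image argument with $\Pi^{-1}$ in place of $\Pi$ shows $\widehat{\Pi}^{-1}$ is continuous. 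Hence $\widehat{\Pi}$ is a homeomorphism.

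The bijectivity bookkeeping is routine once the preceding lemma and Theorem \ref{main} are in hand, so the hard part will be the passage from the purely topological fact ``$\Pi\times\mathrm{id}_{\mathcal{T}}$ carries $\sim$ to $\sim$ and so descends to a homeomorphism of the quotient topologies'' to the sharper statement about the specific infimum metrics $d_{\mathcal{X}}$ and $d_{\widehat{\mathbb{T}}}$. One cannot shortcut this with the compact-to-Hausdorff principle, because $\mathcal{T}$ (for instance the translation group on $\mathbb{R}^{M}$) need not be compact; it is precisely the compactness of $\Sigma_{\infty}^{\dag}$ and $\mathbb{T}_{\infty}$, used to make $\Pi^{\pm1}$ uniformly continuous while the $\mathcal{T}$-coordinate is transported verbatim, that makes the two infimum metrics correspond. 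I would also flag the tacit point, which the paper's phrase ``define a metric space $(\mathcal{X},d_{\mathcal{X}})$ in the obvious way'' takes for granted, that these infimum pseudometrics are genuine metrics; the separation of distinct classes that this needs can be extracted from local rigidity via Theorem \ref{main}.
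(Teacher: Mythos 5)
Your proposal is correct in outline, but there is nothing in the paper to measure it against: the paper states this lemma (and the preceding lemma asserting $\widehat{\Psi}(\Pi(\theta),E)=\widehat{\Psi}(\Pi(\psi),E^{\prime})$ iff $\Psi(\theta,E)=\Psi(\psi,E^{\prime})$) with no proof at all, so your argument is supplying details the authors leave implicit rather than following or diverging from a written proof. Your decomposition is the natural one: well-definedness and injectivity are exactly the content of the preceding lemma, which in turn rests on Theorem \ref{main}; surjectivity is immediate from $\mathbb{T}_{\infty}=\Pi(\Sigma_{\infty}^{\dag})$; and for bicontinuity your use of compactness of $\Sigma_{\infty}^{\dag}$ (closed in the compact space $\Sigma^{\dag}$) together with Corollary \ref{cor02} to get uniform continuity of $\Pi$ and $\Pi^{-1}$, while the $\mathcal{T}$-coordinate is carried over verbatim, is sound and correctly avoids any appeal to compactness of $\mathcal{T}$, which indeed fails for translation groups. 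Passing the estimate through representatives nearly realizing the infimum works precisely because a representative $(\theta^{\prime},E^{\prime})$ of $\Psi(\theta,E)$ maps to a representative $(\Pi(\theta^{\prime}),E^{\prime})$ of $\widehat{\Psi}(\Pi(\theta),E)$, again by the preceding lemma.

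One point deserves more care than you give it. You flag that the infimum expressions must separate distinct classes, which Theorem \ref{main} can be made to yield, but the paper's phrase ``induced metric'' hides a second issue: an infimum over single pairs of representatives need not satisfy the triangle inequality (the standard quotient pseudometric is an infimum over finite chains), so either one must verify the triangle inequality directly for these particular equivalence classes or work with the chain definition on both sides; your transfer argument survives in either case, since it only moves one pair (or one chain) of representatives across $\Pi\times\mathrm{id}_{\mathcal{T}}$ at a time, but the claim that $d_{\mathcal{X}}$ and $d_{\widehat{\mathbb{T}}}$ are metrics is not purely a separation issue. With that caveat addressed, your proof is complete and, if anything, more explicit than anything the paper offers.
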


Now look at the action of the dynamical systems 
\begin{equation*}
\widehat{S}:\mathcal{X\rightarrow X}\text{ and }\widehat{\alpha}:\widehat{%
\mathbb{T}}\rightarrow\widehat{\mathbb{T}}
\end{equation*}
defined by%
\begin{align*}
\widehat{S}\Psi(\theta,E) & =\widehat{S}\Psi(\theta^{1},Ef_{1}^{-1})=\Psi(S%
\theta^{1},Esf_{1}^{-1}) \\
\widehat{\alpha}\widehat{\Psi}(\Pi(\theta),E) & =\widehat{\alpha }\widehat{%
\Psi}(\Pi(\theta^{1}),Ef_{1}^{-1})=\widehat{\Psi}(\Pi(S\theta
^{1}),Esf_{1}^{-1}).
\end{align*}

\begin{theorem}
Let $(\mathcal{F},\mathcal{G)}$ be locally rigid. Then all of the referenced
transformations in the following diagram are well defined homeomorphisms and
the diagram commutes 
\begin{equation*}
\begin{array}{ccc}
\mathcal{X} & \overset{\widehat{S}}{\rightarrow} & \mathcal{X} \\ 
\widehat{\Pi}\downarrow &  & \downarrow\widehat{\Pi} \\ 
\widehat{\mathbb{T}} & \underset{\widehat{\alpha}}{\rightarrow} & \widehat{%
\mathbb{T}}%
\end{array}
\text{ .}
\end{equation*}
\end{theorem}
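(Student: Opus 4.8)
The plan is to derive this theorem as the $\widehat{\mathbb T}$-analogue of Theorem \ref{conj1}, leveraging the two lemmas immediately preceding the statement (the one identifying $\widehat\Psi$-classes with $\Psi$-classes, and the one giving the homeomorphism $\widehat\Pi:\mathcal X\to\widehat{\mathbb T}$) together with Theorem \ref{main}. First I would check that $\widehat S$ and $\widehat\alpha$ are well defined. The subtlety is that the definitions of $\widehat S\Psi(\theta,E)$ and $\widehat\alpha\widehat\Psi(\Pi(\theta),E)$ are given by \emph{choosing} a representative of the form $(\theta^1,Ef_1^{-1})$ (i.e. one whose $\theta$-component begins with the symbol $1$) and then applying $S$ to that component while twisting the isometry by $s$. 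Since every edge string in $\Sigma_\infty^\dag$ has \emph{some} first symbol, and since one may always pre-compose to exhibit such a representative, I would first argue existence of a representative of the required form, and then show that the resulting $\Psi$-class (resp. $\widehat\Psi$-class) does not depend on which such representative was chosen. This last independence is exactly where Theorem \ref{main} enters: two representatives of the same class differ by the relation $p,q\in\mathbb N_0$, $\xi(\theta|p)=\xi(\psi|q)$, $E(E')^{-1}=f_{-\psi|q}f_{-\theta|p}^{-1}$, $S^p\theta=S^q\psi$, and applying $S$ together with the $s$-twist preserves precisely this relation (the $\xi$-values on both sides shift by the same amount because $1$ contributes $a_1=1$, and the isometry bookkeeping matches because $s f_1^{-1}$ applied to $f_{-\theta|p}^{-1}$ is $f_{-S\theta^1|p-?}$ up to the relevant $E_\cdot$ normalization). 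I expect this to be the main obstacle: making the cocycle/twist bookkeeping on the $\mathcal T$-factor come out exactly right so that $\widehat S$ descends to $\mathcal X$ and $\widehat\alpha$ to $\widehat{\mathbb T}$.

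Next I would establish that $\widehat S$ and $\widehat\alpha$ are homeomorphisms. For bijectivity, I would write down the inverse of $\widehat S$ explicitly: given $\Psi(\theta,E)$, choose a representative whose $\theta$-component is $1\theta'$ (which exists after one application of $S$-surjectivity on $\Sigma_\infty^\dag$, exactly as in the passage just before Theorem \ref{conj1}, where relative primality of the $a_n$ is used), and set $\widehat S^{-1}\Psi(1\theta',E)=\Psi(\theta',Es^{-1}f_1)$; one checks $\widehat S\widehat S^{-1}=\widehat S^{-1}\widehat S=\mathrm{id}$ on classes. Continuity in both directions follows because on the product $\Sigma_\infty^\dag\times\mathcal T$ (with the metric $d+d_{\mathcal T}$) the lift of $\widehat S$ on a representative is $(\theta^1,Ef_1^{-1})\mapsto(S\theta^1,Esf_1^{-1})$, which is continuous — $S$ is continuous on $\Sigma_\infty^\dag$ and right-multiplication by the fixed isometry $sf_1^{-1}$ is continuous on $\mathcal T$ — and the quotient map $\Sigma_\infty^\dag\times\mathcal T\to\mathcal X$ is continuous and open, so $\widehat S$ is continuous on $\mathcal X$; the same argument applies to $\widehat S^{-1}$ and, via the homeomorphism $\widehat\Pi$, to $\widehat\alpha^{\pm1}$ on $\widehat{\mathbb T}$.

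Finally I would verify commutativity of the square. Because $\widehat\Pi(\Psi(\theta,E))=\widehat\Psi(\Pi(\theta),E)$ by the preceding lemma, it suffices to compute $\widehat\Pi(\widehat S\Psi(\theta,E))$ and $\widehat\alpha(\widehat\Pi(\Psi(\theta,E)))$ on the same distinguished representative $(\theta^1,Ef_1^{-1})$: the left side is $\widehat\Pi(\Psi(S\theta^1,Esf_1^{-1}))=\widehat\Psi(\Pi(S\theta^1),Esf_1^{-1})$, and the right side is $\widehat\alpha\,\widehat\Psi(\Pi(\theta^1),Ef_1^{-1})=\widehat\Psi(\Pi(S\theta^1),Esf_1^{-1})$ directly from the definition of $\widehat\alpha$. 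Hence the two agree and the diagram commutes. I would close by remarking that $\widehat S$, $\widehat\alpha$ being bijective, the reverse square (with $\widehat S^{-1}$, $\widehat\alpha^{-1}$) automatically commutes as well, so the inflation/deflation dynamical system on $\widehat{\mathbb T}$ is topologically conjugate to the shift $\widehat S$ on $\mathcal X$, completing the $\widehat{\mathbb T}$-version of Theorem \ref{conj1}.
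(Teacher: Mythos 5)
The paper itself gives no proof of this theorem --- the section breaks off into editorial notes immediately after the statement, and the two preceding lemmas (the identification of $\widehat{\Psi}$-classes with $\Psi$-classes and the homeomorphism $\widehat{\Pi}:\mathcal{X}\rightarrow\widehat{\mathbb{T}}$) are also left unproved --- so there is nothing of the authors' to compare you against. Your overall strategy is clearly the intended one: treat the theorem as the $\widehat{\mathbb{T}}$-analogue of Theorem \ref{conj1}, reduce everything to Theorem \ref{main} through the two lemmas, observe that commutativity of the square is essentially definitional once $\widehat{S}$, $\widehat{\alpha}$ and $\widehat{\Pi}$ are known to be well defined, and locate the real content in well-definedness and in the topological assertions.

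As a proof, however, the proposal has concrete gaps, and they sit exactly where you flag them. First, the central well-definedness computation is not carried out: you leave the cocycle bookkeeping as ``$f_{-S\theta^{1}|p-?}$ up to the relevant normalization,'' and that verification --- that two admissible representatives beginning with the symbol $1$ are carried by $(\psi,F)\mapsto(S\psi,\cdot)$ to pairs satisfying the defining relation $\xi(\theta|p)=\xi(\psi|q)$, $EE'^{-1}=f_{-\psi|q}f_{-\theta|p}^{-1}$, $S^{p}\theta=S^{q}\psi$ --- is the entire mathematical content beyond the lemmas. Moreover the definitions need repair before that computation can even be attempted: if $E\in\mathcal{T}$ is an isometry then $Esf_{1}^{-1}$ has scaling ratio $s^{1-a_{1}}$ and $Ef_{1}^{-1}$ has ratio $s^{-a_{1}}$, so neither lies in $\mathcal{T}$ unless $a_{1}=1$; your reading (``the $\xi$-values shift by the same amount because $1$ contributes $a_{1}=1$'') silently imports that hypothesis, which the paper assumes only in an example in Case (1). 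A complete argument must either add $a_{1}=1$ or recast $\widehat{S}$ as the decrement-of-$\xi$-by-one map of the passage before Theorem \ref{conj1}, and must also prove existence of an admissible $1$-leading representative in the graph-directed setting rather than only for $|\mathcal{V}|=1$. Second, the topological claims are glossed: the quotient map $\Sigma_{\infty}^{\dag}\times\mathcal{T}\rightarrow\mathcal{X}$ is asserted to be open without argument; $d_{\mathcal{X}}$ is an infimum over representatives whose triangle inequality, and whose agreement with the quotient topology, are never checked; and the lift of $\widehat{S}$ is not the single map $(\theta,E)\mapsto(S\theta,Esf_{1}^{-1})$ on all of $\Sigma_{\infty}^{\dag}\times\mathcal{T}$ but requires first passing to the distinguished representative, so continuity does not follow merely from continuity of $S$ and of right translation by a fixed isometry. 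Supplying these verifications is what a proof of the theorem actually requires.
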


**Discuss relationship to \cite{anderson}.

**This desciption implies that $\widehat{\mathbb{T}}$ is an indecomposable
continuum in some standard cases.

\begin{acknowledgement}
We thank Alexandra Grant for careful readings and corrections of various
versions of this work , and for many interesting and helpful discussions and
observations.
\end{acknowledgement}

\end{document}